\theoremstyle{plain} 
\newtheorem{thm}{Theorem}[section]
\newtheorem{lemma}[thm]{Lemma}
\newtheorem{proposition}[thm]{Proposition}
\newtheorem{corollary}[thm]{Corollary}
\theoremstyle{definition}
\newtheorem{notation}[thm]{Notation}
\newtheorem{definition}[thm]{Definition}
\newtheorem{example}[thm]{Example}
\newtheorem{remark}[thm]{Remark}
\newcommand{\Z}{\mathbb{Z}}
\newcommand{\N}{\mathbb{N}}
\newcommand{\cB}{\mathcal{B}}
\newcommand{\cD}{\mathcal{D}}
\newcommand{\cF}{\mathcal{F}}
\newcommand{\cG}{\mathcal{G}}
\newcommand{\cH}{\mathcal{H}}
\newcommand{\cJ}{\mathcal{J}}
\newcommand{\cK}{\mathcal{K}}
\newcommand{\cM}{\mathcal{M}}
\newcommand{\cO}{\mathcal{O}}
\newcommand{\cQ}{\mathcal{Q}}
\newcommand{\bA}{\mathbf{A}}
\newcommand{\bC}{\mathbf{C}}
\newcommand{\bD}{\mathbf{D}}
\newcommand{\bQ}{\mathbf{Q}}
\newcommand{\bZ}{\mathbf{Z}}
\newcommand{\sD}{\mathscr{D}}
\renewcommand{\d}{\partial}
\newcommand{\gr}{\mathrm{Gr}}
\newcommand{\DR}{\mathrm{DR}}
\newcommand{\Sing}{\mathrm{Sing}}
\newcommand{\MHM}{\mathrm{MHM}}
\newcommand{\mhm}{\mathrm{MHM}}
\newcommand{\td}{\mathrm{td}}
\newcommand{\MHS}{\mathrm{mHs}}
\begin{document}

\title[Characteristic classes and higher singularities]{Hirzebruch-Milnor classes of local complete intersections, \\ minimal exponent, and applications to higher singularities}

\author{Bradley Dirks}
\address{School of Mathematics, 1 Einstein Drive, Princeton, New Jersey 08540, USA}
\email{bradley.dirks@stonybrook.edu}

\author{Lauren\c{t}iu Maxim}
\address{L. Maxim : Department of Mathematics, University of Wisconsin-Madison, 480 Lincoln Drive, Madison WI 53706-1388, USA, \newline
{\text and} \newline Institute of Mathematics of the Romanian Academy, P.O. Box 1-764, 70700 Bucharest, ROMANIA}
\email{maxim@math.wisc.edu}

\author{Sebasti\'{a}n Olano}
\address{Department of Mathematics, University of Toronto, 40 St. George St., Toronto, Ontario Canada, M5S 2E4}
\email{seolano@math.toronto.edu}

\begin{abstract}
 In this paper we use the deformation to the normal cone and the corresponding Verdier-Saito specialization to define and study (spectral) Hirzebruch-Milnor type homology characteristic classes for local complete intersections.
Our main results describe vanishing properties of these classes in relation to the minimal exponent. As applications, we show how Hirzebruch-Milnor classes of local complete intersections with a projective singular locus can be used to detect higher Du Bois and higher rational singularities.\end{abstract}

    \date{\today}

\subjclass[2020]{Primary: 14J17, Secondary: 32S35, 32S50, 14C17}

\keywords{minimal exponent, Du Bois singularities, rational singularities, Hirzebruch-Milnor classes, local complete intersections, V-filtrations.}

\maketitle

\tableofcontents

\section{Introduction}
Hirzebruch-Milnor type characteristic classes of complex hypersurfaces (\cite{CMSS,MSS,MSY23}) and of certain complete intersections (\cite{MSS1}) have been introduced in recent years as a Hodge-theoretic homological measure of the complexity of singularities. In the context of hypersurfaces these characteristic classes have been used in \cite{MSS}, resp., \cite{MSY23,MY}, for detecting rational and Du Bois singularities, and resp. higher versions of such singularities. In this paper, we define and study Hirzebruch-Milnor type homology characteristic classes for arbitrary local complete intersection (lci) varieties, and we describe vanishing properties of these classes in relation to the minimal exponent.  For lci varieties with a projective  singular locus, we show how Hirzebruch-Milnor classes can be used to detect higher Du Bois and higher rational singularities.

Rational and Du Bois singularities are important classes of singularities which are intimately connected to birational geometry, and in particular to the minimal model program. It is, for instance, well known that log terminal singularities are rational, while log canonical singularities are Du Bois. Higher analogues of rational and Du Bois singularities of hypersurfaces and, more generally, of local complete intersections, were introduced and studied recently through Hodge theoretic methods in the works of Jung-Kim-Saito-Yoon \cite{JKSY}, Musta\c{t}\u{a}-Olano-Popa-Witaszek \cite{MOPW}, Friedman-Laza \cite{FL22,FL22a}, Musta\c{t}\u{a}-Popa \cite{MP22}, Chen-Dirks-Musta\c{t}\u{a} \cite{CDM}, Dirks \cite{D23}, etc. (See Section \ref{hdbhr} for the relevant definitions). Such singularities are completely described in terms of the minimal exponent $\widetilde{\alpha}$, an important invariant of singularities introduced by Saito \cite{Saito94} in the case of hypersurfaces, and by Chen-Dirks-Musta\c{t}\u{a}-Olano \cite{CDMO} for local complete intersections, as a refinement of the more classical log canonical threshold ($lct$).

In the case of hypersurfaces, rational and Du Bois singularities were  investigated from the viewpoint of Hirzebruch-Milnor characteristic classes in \cite{MSS}, and the results of loc.cit. were recently extended to the study of higher notions of Du Bois and rational singularities in \cite{MSY23,MY}; see Section \ref{ccc} for an overview of these results. The goal of this paper is to investigate higher rational and higher Du Bois singularities of arbitrary local complete intersections via suitably defined Hirzebruch-Milnor classes. 

For defining Hirzebruch-Milnor characteristic classes of an lci subvariety $X$ in a smooth variety $Y$, we make use of: (i) the deformation to the normal cone $C_XY$ of $X$ in $Y$ (cf. Section \ref{defc}), (ii) the corresponding Verdier-Saito specialization \cite{V,Saito90} (recalled in Section \ref{defc}), and (iii) the Brasselet-Sch\"urmann-Yokura Hirzebruch class transformation \cite{BSY}; compare also with \cite{Sc}, where a corresponding Chern class theory was developed. These new Hirzebruch-Milnor classes, denoted (in the un-normalized case) by $M_{y*}(X \subset Y)\in H_*(X)[y]$, are supported on the singular locus of $X$ and, as such, they provide both quantitative and qualitative information about the singularities of the lci subvariety. We also introduce unipotent and resp. non-unipotent versions of these classes, denoted by $[M^{\{1\}}_{y*}(X \subset Y)]$, resp., $[M^{\{\neq 1\}}_{y*}(X \subset Y)]$.
We, moreover, define spectral Hirzebruch-Milnor classes $M^{sp}_{t*}(X \subset Y)$ of lci varieties by taking advantage of the monodromicity of the specialization complex ${\rm Sp_X}(\bQ^H_Y)$, and using the spectral Hirzebruch class transformation of \cite{MSS} (and recalled in Section \ref{ccc}). These are refined versions of the Hirzebruch-Milnor classes, providing global homological generalizations of the Hodge spectrum introduced in \cite{DMS}. For the precise definition and main properties of the (spectral) Hirzebruch-Milnor classes of an lci variety, and for their relation with previously defined notions of such classes of hypersurfaces, see Section \ref{charc}.

Our main results describe vanishing properties of the spectral Hirzebruch-Milnor classes of lci varieties, expressed in terms of the minimal exponent. 
For $\alpha \in \bQ$, we denote the coefficient of $t^\alpha$ in the spectral Hirzebruch-Milnor class $M^{sp}_{t\ast}(X\subset Y)$ by 
 $M^{sp}_{t\ast}(X\subset Y)|_{t^{\alpha}}\in H_{\ast}(\Sing(X))$. 
We then show the following (see Theorem \ref{thm-mainHighCoefficients}).
\begin{thm} \label{thm-mainHighCoefficientsi}
Let $X\subset Y$ be a codimension $r$ local complete intersection in a smooth complex algebraic variety $Y$ such that the log canonical threshold of $X$ satisfies ${\rm lct}(X) > r-1$

Then we have the vanishing
\begin{equation} M^{sp}_{t\ast}(X\subset Y)\vert_{t^{\alpha}}  = 0 \ \text{ for all } \alpha > \dim Y +r -\widetilde{\alpha}(X),\end{equation}
and if the singular locus ${\rm Sing}(X)$ of $X$ is projective, then the converse holds.
\end{thm}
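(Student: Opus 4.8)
The plan is to read off the coefficient $M^{sp}_{t\ast}(X\subset Y)\vert_{t^{\alpha}}$ from the spectral Hirzebruch class transformation of Section \ref{ccc} applied to the monodromic specialization complex $\Sp_X(\bQ^H_Y)$, and then to pin down exactly which spectral numbers $\alpha$ can occur. First I would unwind the definition recalled in Sections \ref{defc} and \ref{charc}: by monodromicity, $\Sp_X(\bQ^H_Y)$ splits into generalized monodromy eigenspaces, and the spectral class assigns to each graded piece the Brasselet--Sch\"urmann--Yokura Hirzebruch class \cite{BSY} of that piece, weighted by the corresponding power of $t$. In this way $M^{sp}_{t\ast}(X\subset Y)\vert_{t^{\alpha}}$ becomes, up to the normalization built into the transformation, the Hirzebruch homology class of the weight-$\alpha$ spectral constituent of $\Sp_X(\bQ^H_Y)$, a class automatically supported on $\Sing(X)$.

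For the vanishing I would show that no spectral number strictly larger than $\dim Y + r - \widetilde{\alpha}(X)$ appears. The hypothesis ${\rm lct}(X) > r-1$ is used here to guarantee that $\Sp_X(\bQ^H_Y)$ has the expected structure, so that its spectrum is governed by the microlocal $V$-filtration on the $\cD$-module out of which $\widetilde{\alpha}(X)$ is defined in \cite{CDMO}. The extremal nonzero graded piece of that $V$-filtration is, by definition, the one recording $\widetilde{\alpha}(X)$; translating its index through the cohomological shift of the specialization functor and the Tate-twist normalization of the spectral transformation converts it into the \emph{largest} power of $t$ carrying a nonzero class, namely $t^{\dim Y + r - \widetilde{\alpha}(X)}$. (The passage from the minimal, rather than maximal, jump is exactly the reflection supplied by duality and the symmetry of the Hodge spectrum, compare \cite{DMS}.) Consequently $M^{sp}_{t\ast}(X\subset Y)\vert_{t^{\alpha}} = 0$ for every $\alpha > \dim Y + r - \widetilde{\alpha}(X)$.

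For the converse, suppose $\Sing(X)$ is projective and set $\alpha_0 := \dim Y + r - \widetilde{\alpha}(X)$. The point of projectivity is that the proper pushforward to a point gives a degree map $H_\ast(\Sing(X)) \to H_\ast(\mathrm{pt})$, and the degree of $M^{sp}_{t\ast}(X\subset Y)\vert_{t^{\alpha_0}}$ computes a signed sum of the Hodge numbers of the extremal spectral constituent of the global cohomology of $\Sp_X(\bQ^H_Y)$, in the spirit of \cite{MSS,MSY23}. Since $\widetilde{\alpha}(X)$ is a genuine jump of the $V$-filtration, this extremal constituent is nonzero; I would then invoke its purity and polarizability to conclude that its Hodge numbers enter the degree with a single sign, so that no cancellation is possible and the degree is nonzero. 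Hence $M^{sp}_{t\ast}(X\subset Y)\vert_{t^{\alpha_0}} \neq 0$, showing that the vanishing range in the theorem is sharp.

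The hardest step, I expect, is precisely this last non-cancellation in the converse. A priori the contributions of different strata or components of $\Sing(X)$ to the degree could cancel, and it is only the combination of projectivity with the Hodge-theoretic positivity of the extremal, minimal-exponent graded piece that forbids this. Making that positivity precise, while simultaneously keeping the dimension-and-shift bookkeeping accurate enough that the threshold lands exactly at $\dim Y + r - \widetilde{\alpha}(X)$ and not at a neighboring value, is the delicate heart of the argument; the vanishing direction, by contrast, should follow fairly directly once the spectrum of $\Sp_X(\bQ^H_Y)$ is identified with the $V$-filtration data.
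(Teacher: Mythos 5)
Your vanishing direction follows the paper's route in outline: the minimal exponent is translated, via the $V$-filtration characterization (Theorems \ref{tLCI} and \ref{tLCIQ}), into vanishing of the low-index pieces ${\rm Gr}^F_{p-\dim Y}{\rm DR}(\varphi_{h,\lambda}(\bQ^H_{\widetilde Y}[\dim(\widetilde Y)-1]))$, and these feed the coefficients of $t^{\dim Y+1-p-\lambda}$ in $M^{sp}_{t\ast}(X\subset Y)$. One correction: no duality or spectrum symmetry is involved in this direction. The reversal from ``lowest Hodge index'' to ``highest power of $t$'' is already built into the definition of $\DR_t$, which sends $\gr^F_{-p}$ to $t^{p+\ell(\lambda)}$; duality enters only in the companion result on low coefficients (Theorem \ref{tlowCoefficients}). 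Also, the real content you are gliding over is Theorem \ref{tLCIQ} itself, i.e.\ the comparison between the $V$-filtration of $\cB_f$ on $Y\times\bC^r$ (where $\widetilde\alpha(X)$ lives) and the Hodge filtration of $\varphi_h$ on the deformation space; that comparison is where the monodromic structure theory of \cite{CD,D23} is used.

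The converse is where your proposal has a genuine gap. You propose to push the class forward to a point and argue that the resulting ``degree'' is a signed sum of Hodge numbers which cannot cancel by ``purity and polarizability.'' This fails for three reasons. First, $\varphi_{h}(\bQ^H_{\widetilde Y}[\dim(\widetilde Y)-1])$ is a mixed, not pure, Hodge module, so no polarization argument is available. Second, even for a pure polarizable module the pushforward of $\gr^F_{-p}\DR$ to a point is an alternating sum $\sum_i(-1)^i\dim\gr^F_{-p}H^i$ over cohomological degrees, and there is no vanishing theorem (absent an ample twist) forcing this to concentrate in one degree; cancellation is entirely possible. Third, and most basically, the theorem asserts non-vanishing of a class in $H_*(\Sing(X))$, and a nonzero class there can perfectly well have degree zero (e.g.\ $\td_*[\cO_{\P^1}(-1)]$ has degree $\chi=0$), so the degree map is too lossy an invariant even if your positivity held. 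The paper's actual mechanism is different and is the heart of the matter: Corollary \ref{cor-lowestHodge} shows that the \emph{extremal} nonzero piece ${\rm Gr}^F_{q'+1-\dim Y}{\rm DR}(\varphi_{h,\mu'})$ is not merely nonzero but is a single coherent sheaf of the form $\pi^*(\cF)$ for $\cF\neq 0$ coherent on $\Sigma_X=\Sing(X)$ (this uses the monodromic-module results \eqref{eq-surjectLowestHodge}--\eqref{eq-isoLowestHodge} and the gluing Lemma \ref{lem-globalization}). Then \cite[Proposition 1]{MSY23} gives $[\cF]\neq 0$ in $K_0(\Sigma_X)_\bQ$ because $\Sigma_X$ is projective, the Todd isomorphism $\td_*\colon K_0(\Sigma_X)_\bQ\xrightarrow{\sim}CH_*(\Sigma_X)_\bQ$ transports this to a nonzero homology class, and $\pi^*$ and the Gysin map $s^*$ are isomorphisms, so the full class $M^{sp}_{t\ast}(X\subset Y)\vert_{t^{\alpha_0}}$ in $H_*(\Sing(X))$ is nonzero. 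Without the identification of the extremal piece as $\pi^*$ of a single sheaf on $\Sing(X)$, and without replacing the degree map by the Todd isomorphism on the projective singular locus, the converse does not go through.
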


Recall that for a codimension $r$ lci subvariety $X$ of a smooth variety $Y$, its log canonical threshold satisfies the identity ${\rm lct}(X) = \min\{r,\widetilde{\alpha}(X)\}$, where $\widetilde{\alpha}(X)$ is the minimal exponent of $X$ (see Section \ref{minlct}). Hence the assumption ${\rm lct}(X) > r-1$ appearing in the converse statement of Theorem \ref{thm-mainHighCoefficientsi} can equivalently be written as $\widetilde{\alpha}(X) > r-1$. In particular, if $X$ has Du Bois singularities then this condition is automatically satisfied.

\begin{remark} When $X \subseteq Y$ is defined by a regular sequence $f_1,\dots, f_r \in \cO_Y(Y)$, \cite[Theorem 1.1]{CDMO} shows that the minimal exponent  of $X$ can also be computed through the minimal exponent of an associated hypersurface by the following construction: let $g = \sum_{i=1}^r y_i f_i \in \cO_Y(Y)[y_1,\dots, y_r]$ and consider the hypersurface defined by $g$ on $U = Y \times (\bA^r \setminus \{0\})$. Then
\[ \widetilde{\alpha}(X) = \widetilde{\alpha}(g\vert_U).\]
In light of Theorem \ref{thm-mainHighCoefficientsi} (and Theorem \ref{dbrsp} below), it is reasonable to expect that there may be some relation between the characteristic classes for $X$ (as defined in this paper) and those of the hypersurface defined by $g\vert_U$ (as considered, e.g., in \cite{MSS, MSY23}). We do not see a simple way to relate them, at the moment. The main difficulty is that, in comparing the vanishing cycles mixed Hodge module for $g\vert_U$ with the mixed Hodge module in the definition of $M^{sp}_{t\ast}(X\subset Y)$, a Fourier-Laplace transform is involved (see \cite[Proposition 3.4]{D23}). This does not allow for a simple dictionary between the associated graded de Rham pieces of these mixed Hodge modules.
\end{remark}

As a consequence of Theorem \ref{thm-mainHighCoefficientsi}, we have the following result about the Hodge spectrum of an isolated complete intersection singularity, as introduced in \cite{DMS}, with formula \eqref{di} below already obtained in \cite{D23}.

\begin{corollary}\label{cli} 
Assume that the local complete intersection $X \subset Y$ has only one isolated singularity $\{x\}$ and satisfies ${\rm lct}(X) > r-1$. Then, if
$${\rm Sp}(X,x)=\sum_{\alpha \in \bQ} \overline{m}_{\alpha,x} t^\alpha$$ is the reduced spectrum of $X$ at $x$, we have that
 \begin{equation}\label{di} \min\{\alpha \mid \overline{m}_{\alpha,x} \neq 0\} = \widetilde{\alpha}(X) - r +1.\end{equation}
In particular, $X$ has $k$-Du Bois singularities if and only if
\begin{equation}
\overline{m}_{\alpha,x}=0 \ \ \text{for  all} \ \alpha <k+1.
\end{equation}
Moreover, $X$ has $k$-rational singularities if and only if
\begin{equation} 
\overline{m}_{\alpha,x}\ \ \text{for  all} \ \alpha \leq k+1.
\end{equation}
\end{corollary}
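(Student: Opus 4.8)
The plan is to deduce Corollary \ref{cli} from Theorem \ref{thm-mainHighCoefficientsi} by specializing the global statement to the case of a single isolated singular point and translating the vanishing of the spectral Hirzebruch-Milnor coefficients into a statement about the reduced spectrum. First I would observe that when $X$ has exactly one isolated singular point $x$, the singular locus $\mathrm{Sing}(X) = \{x\}$ is a point, hence trivially projective, so the converse direction of Theorem \ref{thm-mainHighCoefficientsi} applies. In this situation $H_\ast(\mathrm{Sing}(X)) = H_0(\{x\})$, and I expect the spectral Hirzebruch-Milnor class to reduce to a degree-zero class whose coefficient in $t^\alpha$ is (up to a normalization I would need to pin down from Section \ref{charc}) governed by the reduced spectral multiplicities $\overline{m}_{\alpha,x}$ of the isolated complete intersection singularity in the sense of \cite{DMS}. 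The essential point is that, for an isolated singularity, the localized class records exactly the local spectral data, so the coefficient $M^{sp}_{t\ast}(X\subset Y)\vert_{t^\beta}$ is nonzero precisely when $\overline{m}_{\gamma,x} \neq 0$ for a spectral value $\gamma$ related to $\beta$ by the linear change of variable coming from the definition of the class.

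The second step is to make that linear change of variable explicit. Theorem \ref{thm-mainHighCoefficientsi} asserts that $M^{sp}_{t\ast}(X\subset Y)\vert_{t^\alpha} = 0$ for all $\alpha > \dim Y + r - \widetilde{\alpha}(X)$, and that (since $\mathrm{Sing}(X)$ is projective here) this bound is sharp, i.e. the coefficient at $\alpha = \dim Y + r - \widetilde{\alpha}(X)$ does not vanish. On the spectrum side, I want to identify the smallest exponent $\gamma$ with $\overline{m}_{\gamma,x} \neq 0$. Matching the sharp threshold of the characteristic class with the extremal exponent of the local spectrum should yield a relation of the form $\gamma_{\min} = \widetilde{\alpha}(X) - r + 1$, which is exactly formula \eqref{di}. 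I would set up the bookkeeping so that the ``top'' coefficient $\alpha = \dim Y + r - \widetilde{\alpha}(X)$ of the class corresponds under the reflection/normalization inherent in the Hirzebruch-Milnor construction to the ``bottom'' exponent $\widetilde{\alpha}(X) - r + 1$ of the reduced spectrum; the factor $\dim Y$ and the sign of the reflection are the ingredients that must be tracked carefully.

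Once \eqref{di} is established, the two characterizations follow by combining it with the spectral descriptions of higher Du Bois and higher rational singularities recalled in Section \ref{hdbhr}. For $k$-Du Bois singularities, the known characterization is $\widetilde{\alpha}(X) \geq k + r$ (equivalently $\widetilde{\alpha}(X) - r + 1 \geq k+1$), so by \eqref{di} this is equivalent to $\min\{\alpha \mid \overline{m}_{\alpha,x} \neq 0\} \geq k+1$, i.e. $\overline{m}_{\alpha,x} = 0$ for all $\alpha < k+1$. For $k$-rational singularities, the corresponding threshold is the strict inequality $\widetilde{\alpha}(X) > k + r$, which translates into $\widetilde{\alpha}(X) - r + 1 > k+1$, and hence into the vanishing of $\overline{m}_{\alpha,x}$ for all $\alpha \leq k+1$. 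These are direct substitutions into \eqref{di} using the minimal-exponent criteria for higher singularities.

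I expect the main obstacle to be the precise normalization bookkeeping in the second step: correctly identifying how the spectral Hirzebruch-Milnor class $M^{sp}_{t\ast}$ localizes at an isolated singularity to the reduced Hodge spectrum of \cite{DMS}, and pinning down the exact linear substitution (including the reflection and the shift by $\dim Y + r$) that carries the sharp vanishing threshold of Theorem \ref{thm-mainHighCoefficientsi} to the extremal spectral exponent. The conceptual content is entirely contained in Theorem \ref{thm-mainHighCoefficientsi}; the delicate part is matching conventions between the global characteristic-class normalization and the local spectral normalization so that no off-by-one or sign error creeps into \eqref{di}. Since the formula \eqref{di} was already obtained in \cite{D23}, I would cross-check my normalization against that reference to confirm the constants.
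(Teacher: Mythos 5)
Your proposal is correct and follows essentially the same route as the paper: localize $M^{sp}_{t\ast}(X\subset Y)$ at the isolated singularity to ${\rm Sp}'(X,x)$ (the paper's Corollary \ref{prsp} gives precisely $M^{sp}_{t\ast}(X\subset Y)=(1-t)^r\cdot{\rm Sp}'(X,x)$), use the reflection ${\rm Sp}(X,x)=(-1)^{\dim X}t^{\dim X+1}\iota({\rm Sp}'(X,x))$ to convert the sharp upper threshold of Theorem \ref{thm-mainHighCoefficientsi} into the minimal exponent of the reduced spectrum, and then apply the minimal-exponent criteria for $k$-Du Bois and $k$-rational singularities. The only normalization you left open — how the $(1-t)^r$ factor shifts the extremal degree by exactly $r$ — is handled in the paper by the one-line observation that multiplying by $(1-t)^r$ raises the top nonzero $t$-degree by $r$, which confirms the constants in \eqref{di}.
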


To prove Theorem \ref{thm-mainHighCoefficientsi}, we make use of a new formula for the minimal exponent $\widetilde{\alpha}(X)$ of the lci variety $X$, which is proved in Theorem \ref{tLCIQ}. For the lower spectral classes, we have the following result (see Theorem \ref{tlowCoefficients}).
 \begin{thm}\label{tlowCoefficientsi}
Let $X\subset Y$ be a codimension $r$ local complete intersection in a smooth complex algebraic variety $Y$ with ${\rm lct}(X) > r-1$. 

Then we have the vanishing
\[M^{sp}_{t\ast}(X\subset Y)\vert_{t^{\alpha}}=0 \ \text{ for  all } \ \alpha < \widetilde{\alpha}(X) -r +1.\]

Moreover, if ${\rm Sing}(X)$ is projective, then the converse holds.
\end{thm}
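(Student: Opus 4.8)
The plan is to deduce this statement from the high-coefficient vanishing of Theorem \ref{thm-mainHighCoefficientsi} by exploiting a duality symmetry of the spectral classes, rather than reproving it from scratch via the minimal exponent formula. The two thresholds are reflections of one another: the midpoint of $\widetilde{\alpha}(X) - r + 1$ and $\dim Y + r - \widetilde{\alpha}(X)$ is $(\dim Y + 1)/2$, independent of the minimal exponent. This strongly suggests that the coefficient at $t^{\alpha}$ and the coefficient at $t^{\dim Y + 1 - \alpha}$ are interchanged by Verdier duality, so that the low vanishing is the dual shadow of the high vanishing.

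\textbf{Duality for the specialization complex and its Hirzebruch class.} The constant Hodge module $\bQ^H_Y$ on the smooth variety $Y$ is self-dual up to a Tate twist and a shift by $\dim Y$. Since the Verdier--Saito specialization functor commutes with $\D$ at the cost of inverting the monodromy, the monodromic complex $\mathrm{Sp}_X(\bQ^H_Y)$ is, up to twist and shift, isomorphic to its own Verdier dual with the monodromy $T$ replaced by $T^{-1}$. On the eigenvalue decomposition $\lambda = e^{-2\pi i \alpha}$ this interchanges $\alpha$ with $-\alpha$ modulo $\bZ$, while the Tate twist by $\dim Y$ together with the normalizing shift built into $\mathrm{Sp}$ accounts for the integral part. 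Recalling that the Hirzebruch class transformation $T_{y\ast}$ intertwines Verdier duality with $y \mapsto 1/y$ (up to the usual normalizing factor), and that its spectral refinement $M^{sp}_{t\ast}$ translates the monodromy data into powers of $t$, I would combine these to obtain a symmetry of the form
\[
M^{sp}_{t\ast}(X \subset Y)\vert_{t^{\alpha}} = \pm\, \D\bigl(M^{sp}_{t\ast}(X \subset Y)\vert_{t^{\dim Y + 1 - \alpha}}\bigr),
\]
where $\D$ is the duality involution on $H_{\ast}(\Sing(X))$. In particular $M^{sp}_{t\ast}(X\subset Y)\vert_{t^{\alpha}} = 0$ if and only if $M^{sp}_{t\ast}(X\subset Y)\vert_{t^{\dim Y + 1 - \alpha}} = 0$, and this equivalence holds with no projectivity hypothesis.

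\textbf{Conclusion.} For $\alpha < \widetilde{\alpha}(X) - r + 1$ one has $\dim Y + 1 - \alpha > \dim Y + r - \widetilde{\alpha}(X)$, so the reflected coefficient vanishes by Theorem \ref{thm-mainHighCoefficientsi}, and hence so does $M^{sp}_{t\ast}(X\subset Y)\vert_{t^{\alpha}}$ by the symmetry above; this gives the asserted vanishing. For the converse, when $\Sing(X)$ is projective the converse part of Theorem \ref{thm-mainHighCoefficientsi} guarantees that the extremal coefficient $M^{sp}_{t\ast}(X\subset Y)\vert_{t^{\dim Y + r - \widetilde{\alpha}(X)}}$ is nonzero. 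Applying the duality isomorphism, which on a compact $\Sing(X)$ is an honest isomorphism detectable by pairing with the degree map, transports this non-vanishing to $M^{sp}_{t\ast}(X\subset Y)\vert_{t^{\widetilde{\alpha}(X) - r + 1}} \neq 0$, showing that the threshold $\widetilde{\alpha}(X) - r + 1$ is sharp.

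The main obstacle is pinning down the exact reflection constant $\dim Y + 1$ and the precise form of the duality for $M^{sp}_{t\ast}$. One must carefully track, through the deformation to the normal cone (including the behavior at the fiber over $\infty$ and the non-properness of the deformation space), how the Tate twist and the shift conventions in the specialization functor combine with the $y \mapsto 1/y$ duality of $T_{y\ast}$; getting the integral shift right is exactly what fixes the center of symmetry and hence the precise low threshold. By contrast, the projectivity of $\Sing(X)$ enters only in the converse, where it is needed to upgrade the symmetry from a vanishing-preserving relation to a genuine isomorphism capable of detecting non-vanishing.
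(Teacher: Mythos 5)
Your overall strategy is the same as the paper's: Theorem \ref{tlowCoefficients} is proved there by dualizing the graded de Rham data that controls the high coefficients, and your reflection center $(\dim Y+1)/2$ is exactly right for the non-unipotent eigenvalues. Indeed, since ${\rm Sp}_X$ commutes with $\mathbf D$ and $\bQ^H_Y[\dim Y]$ is pure and self-dual up to twist, one gets $\mathbf D(\varphi_{h,\lambda}(\bQ^H_{\widetilde{Y}}[\dim(\widetilde{Y})-1]))(-\dim Y)\cong \varphi_{h,1-\lambda}(\bQ^H_{\widetilde{Y}}[\dim(\widetilde{Y})-1])$ for $\lambda\in(0,1)$, and combining this with $\mathbb D_{\rm coh}\,{\rm Gr}^F_{\bullet}{\rm DR}(-)\cong {\rm Gr}^F_{-\bullet}{\rm DR}(\mathbf D(-))$ and Fulton's Example 18.3.19 (which replaces your nonexistent ``duality involution on $H_*(\Sing(X))$'' and does preserve (non)vanishing of Todd classes) yields precisely your symmetry for all \emph{non-integer} exponents $\alpha$.

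The genuine gap is at the integer exponents, i.e.\ the unipotent part. Your symmetry applied at $\alpha\in\bZ$ would require $\varphi_{h,1}(\bQ^H_{\widetilde{Y}}[\dim(\widetilde{Y})-1])$ to be self-dual up to a Tate twist, and for $r\geq 2$ it is not: dualizing the short exact sequence \eqref{f3} shows that $\mathbf D(\varphi_{h,1}(\bQ^H_{\widetilde{Y}}[\dim(\widetilde{Y})-1]))(-\dim Y)$ is the \emph{kernel} of ${\rm Sp}_{X,1}(\bQ^H_Y[\dim Y])\to \mathbf D(\bQ^H_{C_XY}[\dim(X)+r])(-\dim Y)$, whereas $\varphi_{h,1}$ is the \emph{cokernel} of $\bQ^H_{C_XY}[\dim(X)+r]\to {\rm Sp}_{X,1}(\bQ^H_Y[\dim Y])$; these differ because $\bQ^H_{C_XY}[\dim(X)+r]$ is not self-dual when $X$ is singular (the defect is exactly the higher rational homology manifold condition discussed in Section \ref{dua}). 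Concretely, the lowest Hodge piece of $\varphi_{h,1}$ is governed by ${\rm Gr}_V^{r}(\cB_f)/\cK_f$, while that of its dual is governed by ${\rm Gr}_V^{r-1}(\cB_f)$ (Corollary \ref{cor-lowestHodge}), so the vanishing of $M^{sp}_{t*}(X\subset Y)\vert_{t^{j}}$ for an integer $j$ is \emph{not} formally equivalent to the vanishing at $t^{\dim Y+1-j}$ given only Theorem \ref{thm-mainHighCoefficients}. This is exactly where the paper has to do extra work: the forward implication at integer exponents uses the vanishing $F_{q+1-\dim Y}{\rm Gr}_V^{r-1}(\cB_f)=0$ coming from Theorem \ref{tLCI}, and the converse at integer exponents uses the second half of Corollary \ref{cor-lowestHodge} to identify the lowest Hodge piece of the \emph{dual} module as $\pi^*$ of a coherent sheaf on the projective $\Sigma_X$ before applying $\td_*$. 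Your outline would need to be supplemented by this separate analysis of $\mathbf D(\varphi_{h,1})(-\dim Y)$; as written, the reduction to Theorem \ref{thm-mainHighCoefficientsi} only goes through for $\alpha\notin\bZ$.
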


As a consequence, we get the following result for the Hodge spectrum of an isolated complete intersection singularity (using the notations from Corollary \ref{cli}) .
\begin{corollary} Assume that the local complete intersection $X \subset Y$ has only one isolated singularity $\{x\}$ and satisfies ${\rm lct}(X) > r-1$. Then 
\[ \max\{\alpha \in \bQ \mid \overline{m}_{\alpha,x} \neq 0\} = \dim(Y) - \widetilde{\alpha}(X).\]
\end{corollary}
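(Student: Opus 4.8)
The plan is to combine Corollary \ref{cli}, which already identifies the smallest exponent occurring in the reduced spectrum, with the symmetry of the spectrum of an isolated complete intersection singularity. Since $X$ has a single isolated singular point $x$, the reduced spectrum ${\rm Sp}(X,x) = \sum_{\alpha} \overline{m}_{\alpha,x}\, t^{\alpha}$ is the Hodge spectrum of the isolated complete intersection singularity $(X,x)$, of dimension $n = \dim X = \dim Y - r$, in the sense of \cite{DMS}.

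The key structural input I would invoke is that this reduced spectrum is symmetric about $(n+1)/2 = (\dim Y - r + 1)/2$; that is, $\overline{m}_{\alpha,x} = \overline{m}_{(\dim Y - r + 1)-\alpha,\,x}$ for every $\alpha \in \bQ$. This reflects the autoduality of the vanishing cohomology of the Milnor fibre of an isolated singularity (which has the homotopy type of a bouquet of $n$-spheres), and I would either cite it directly from \cite{DMS} or deduce it from the self-duality, up to shift, of the specialization complex ${\rm Sp}_X(\bQ^H_Y)$, which is supported at the point $x$.

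Granting the symmetry, the computation is immediate. By Corollary \ref{cli}, the hypothesis ${\rm lct}(X) > r-1$ gives
\[ \min\{\alpha \mid \overline{m}_{\alpha,x} \neq 0\} = \widetilde{\alpha}(X) - r + 1, \]
and reflecting this minimal exponent across the center of symmetry yields
\[ \max\{\alpha \mid \overline{m}_{\alpha,x} \neq 0\} = (\dim Y - r + 1) - (\widetilde{\alpha}(X) - r + 1) = \dim Y - \widetilde{\alpha}(X), \]
as claimed. As a sanity check, for a hypersurface ($r=1$) this recovers the classical statement that the Steenbrink spectrum lies in $[\widetilde{\alpha}(X),\, \dim Y - \widetilde{\alpha}(X)]$ and is symmetric about $\dim Y / 2$.

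The main obstacle is ensuring that the symmetry is stated with the correct normalization: one must verify that the reduced spectrum as defined in \cite{DMS} is centered exactly at $(\dim X + 1)/2$, since any shift in the spectral convention would displace the center and distort the final formula. I would prefer this route over reading the maximum directly off the spectral Hirzebruch-Milnor class via Theorem \ref{thm-mainHighCoefficientsi}: there the top exponent of $M^{sp}_{t\ast}(X\subset Y)$ equals $\dim Y + r - \widetilde{\alpha}(X)$, which differs from the sought maximum by exactly $r$, so the identification between the class at $x$ and the reduced spectrum is \emph{not} the naive one used at the low end in Corollary \ref{cli} and would require the more delicate dictionary of \cite{D23}.
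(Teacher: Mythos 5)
There is a genuine gap: the entire argument rests on the symmetry $\overline{m}_{\alpha,x}=\overline{m}_{(\dim X+1)-\alpha,x}$ of the reduced spectrum of an isolated complete intersection singularity, and for $r\geq 2$ this is neither a safely citable fact nor deducible in the way you suggest. Your proposed derivation from ``the self-duality, up to shift, of ${\rm Sp}_X(\bQ^H_Y)$, which is supported at the point $x$'' fails on two counts. First, ${\rm Sp}_X(\bQ^H_Y[\dim Y])$ is not supported at $x$: it contains $\bQ^H_{C_XY}[\dim(X)+r]$ as a submodule (see \eqref{f3}), and only the quotient $\varphi_h(\bQ^H_{\widetilde{Y}}[\dim(\widetilde{Y})-1])$ is supported over ${\rm Sing}(X)$. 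Second, it is precisely this quotient whose stalk at the generic point $\xi$ defines $\overline{m}_{\alpha,x}$ (Remark \ref{rmk-compareSpectra}), and it is \emph{not} self-dual: dualizing \eqref{f3} shows that $\mathbf D(\varphi_{h,1})(-\dim Y)$ is the \emph{kernel} of ${\rm Sp}_X(\bQ^H_Y[\dim Y])\to\mathbf D(\bQ^H_{C_XY}[\dim(X)+r])(-\dim Y)$, a different subquotient, and since $C_XY$ is singular along $\pi^{-1}(x)$ these do not coincide. (For $r=1$ one has $\varphi_f$ of a pure module on a \emph{smooth} ambient space supported at a point, which is why the classical Steenbrink symmetry holds there; that mechanism is exactly what breaks for $r\geq 2$, since $\widetilde{Y}$ is singular and $\varphi_h$ is supported on a positive-dimensional set.) Even if a symmetry of the ICIS spectrum were available in the literature, matching its normalization to the reduced spectrum of \cite{DMS} would be a nontrivial task, as you acknowledge.

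What the paper actually does is different and avoids any symmetry claim. The relation \eqref{eq-DualSpectrum}, ${\rm Sp}(X,x)=(-1)^{\dim X}t^{\dim X+1}\iota({\rm Sp}'(X,x))$, is \emph{tautological} — it compares two generating functions attached to the same Hodge structures, not a Hodge structure with its dual — and converts the sought maximum of $\overline{m}_{\alpha,x}$ into the minimum of $\overline{m}'_{\alpha,x}$. That minimum is then supplied by Theorem \ref{tlowCoefficients} (noting $M^{sp}_{t*}(X\subset Y)=(1-t)^r\,{\rm Sp}'(X,x)$ has the same lowest exponent as ${\rm Sp}'(X,x)$). All of the genuine duality work is concentrated in the proof of Theorem \ref{tlowCoefficients}: the identification $\mathbf D(\varphi_{h,\lambda})(-\dim Y)\cong\varphi_{h,1-\lambda}$ of \eqref{eq-nonunipDual} handles the non-unipotent eigenvalues, while the unipotent part requires the separate analysis of ${\rm Gr}_V^{r-1}(\cB_{X,Y})$ from Corollary \ref{cor-lowestHodge}. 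In other words, the symmetry of the two \emph{endpoints} of the spectrum that you want to assume is a \emph{consequence} of Theorems \ref{thm-mainHighCoefficients} and \ref{tlowCoefficients} taken together (under the hypothesis ${\rm lct}(X)>r-1$), not an input one can quote; assuming it begs the question. Your closing remark about Theorem \ref{thm-mainHighCoefficientsi} also points at the wrong tool: the factor $(1-t)^r$ shifts the top exponent by $r$ but leaves the bottom exponent unchanged, so the correct route to the maximum of $\overline{m}_{\alpha,x}$ is through the \emph{low}-coefficient Theorem \ref{tlowCoefficientsi} applied to ${\rm Sp}'$, exactly as in the paper.
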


Finally, in Section \ref{hshmc} we use the relation between Hirzebruch-Milnor classes and spectral classes to study higher Du Bois and higher rational singularities via vanishing properties of these classes. Moreover, we get a complete homological characterization of 
such higher singularities in the case when the singular locus of the lci variety $X$ is projective.

If, for an integer $p \geq 0$, we denote  by  
$[M_{y\ast }(X\subset Y)]_{p}$
the coefficient of $y^p$ in $M_{y\ast}(X\subset Y)$, and similarly for  $[M^{\{1\}}_{y*}(X \subset Y)]_p$ and  $[M^{\{\neq 1\}}_{y*}(X \subset Y)]_p$, 
then Theorem \ref{thm-mainHighCoefficients} yields the following (see Theorem \ref{cor-higherSings}).
\begin{thm} \label{cor-higherSingsi}
Let $X\subset Y$ be a codimension $r$ local complete intersection in a smooth complex algebraic variety $Y$. If $X$ has $k$-Du Bois singularities, then
\begin{equation}\label{f29i}
\begin{cases}
[M^{\{\neq 1\}}_{y*}(X \subset Y)]_p=0  & \text{for  all} \ p\geq \dim(Y)-k,  \\
[M^{\{1\}}_{y*}(X \subset Y)]_p=0  &  \text{for  all} \  p\geq \dim(Y)+1-k,
\end{cases}
\end{equation}
and the converse holds if ${\rm Sing}(X)$ is projective and ${\rm lct}(X) > r-1$.

If $X$ has $k$-rational singularities, then
\begin{equation} \label{f28i}
[M_{y\ast}(X\subset Y)]_{p}=0 \ \ \text{for  all}  \ p \geq \dim(Y)- k,
\end{equation}
and the converse holds if ${\rm Sing}(X)$ is projective and ${\rm lct}(X) > r-1$. 
\end{thm}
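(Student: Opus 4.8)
The plan is to derive Theorem \ref{cor-higherSingsi} as a consequence of the spectral-class vanishing results in Theorems \ref{thm-mainHighCoefficientsi} and \ref{tlowCoefficientsi}, by relating the coefficients of the ordinary (and unipotent / non-unipotent) Hirzebruch-Milnor classes in the variable $y$ to the coefficients of the spectral Hirzebruch-Milnor class in the variable $t$. The first step is to recall the precise specialization formula connecting $M^{sp}_{t\ast}(X\subset Y)$ to $M_{y\ast}(X\subset Y)$, $[M^{\{1\}}_{y*}(X \subset Y)]$ and $[M^{\{\neq 1\}}_{y*}(X \subset Y)]$: the spectral class is built from the eigenvalue decomposition of the monodromy on the specialization complex, and setting $t = \text{(appropriate root of unity or specialization } t\mapsto -y)$ should recover the Hirzebruch-Milnor classes, with the unipotent part corresponding to integer exponents $\alpha$ (monodromy eigenvalue $1$) and the non-unipotent part to non-integer $\alpha$. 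Concretely, I expect a relation of the form $[M_{y*}]_p = \sum_{\lceil\alpha\rceil = \,\cdot\,} M^{sp}_{t*}|_{t^\alpha}$, grouping the $t$-exponents by the appropriate integer so that the $y^p$-coefficient collects a contiguous range of $\alpha$'s.

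The second step is to translate the definitions of $k$-Du Bois and $k$-rational singularities into numerical conditions on $\widetilde{\alpha}(X)$. By the characterizations recalled in Section \ref{hdbhr}, $X$ has $k$-Du Bois singularities iff $\widetilde{\alpha}(X) \geq k+r$ (equivalently $\widetilde{\alpha}(X) - r + 1 \geq k+1$), and $k$-rational singularities iff $\widetilde{\alpha}(X) > k+r$. Feeding these inequalities into the vanishing ranges of Theorems \ref{thm-mainHighCoefficientsi} and \ref{tlowCoefficientsi}, one sees that the spectral class is concentrated in the band $\widetilde{\alpha}(X)-r+1 \leq \alpha \leq \dim Y + r - \widetilde{\alpha}(X)$. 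Under the $k$-Du Bois hypothesis the lower endpoint is $\geq k+1$, and under the $k$-rational hypothesis it is $> k+1$; the third step is then to check that, after regrouping the $t^\alpha$-coefficients into $y^p$-coefficients via the relation from step one, the stated vanishing thresholds $p \geq \dim(Y)-k$ and $p \geq \dim(Y)+1-k$ fall out exactly, with the split between the unipotent and non-unipotent pieces reflecting whether the relevant $\alpha$ is an integer (forcing the extra $+1$ shift in the unipotent case).

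For the converse direction, the strategy is to run the equivalences backwards, which is where the projectivity of $\Sing(X)$ and the hypothesis ${\rm lct}(X)>r-1$ become essential: these are precisely the conditions under which Theorems \ref{thm-mainHighCoefficientsi} and \ref{tlowCoefficientsi} give \emph{if and only if} statements (the converse uses that the top or bottom spectral coefficient is genuinely nonzero, via a positivity/non-degeneracy argument for the Hirzebruch class of a smooth projective variety). Thus from the vanishing of the top-degree $y^p$-coefficients one recovers the vanishing of the extreme $t^\alpha$-coefficients, and then the converse halves of the spectral theorems force the required inequality on $\widetilde{\alpha}(X)$, hence the higher-singularity property. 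The main obstacle I anticipate is bookkeeping in step one: pinning down the exact grouping of $t$-exponents into $y$-powers, and correctly tracking the asymmetric $+1$ shift between the unipotent and non-unipotent parts so that the integer-exponent ($\lambda=1$) contributions land in the right $y^p$-coefficient. One must be careful that the boundary cases (when $\dim Y + r - \widetilde{\alpha}(X)$ or $\widetilde{\alpha}(X)-r+1$ is an integer) are handled consistently between the two variables, since it is exactly at integer exponents that the unipotent/non-unipotent distinction, and hence the one-step difference in the two vanishing ranges of \eqref{f29i}, originates.
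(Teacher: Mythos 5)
Your proposal follows essentially the same route as the paper: the paper's proof is exactly the combination of the coefficient identity $[M_{(-y)\ast}]_{p}=\bigoplus_{\alpha \in \bQ\cap[0,1)} M^{sp}_{t\ast}|_{t^{p+\alpha}}$ (so the correct grouping is by the interval $[p,p+1)$, i.e.\ by $\lfloor \alpha\rfloor$, not by $\lceil\alpha\rceil$ as you tentatively wrote; the unipotent part is the integer exponent $t^{p}$ and the non-unipotent part is $\alpha\in(p,p+1)$, which produces the one-step shift you anticipated), the characterization $k$-Du Bois $\iff \widetilde{\alpha}(X)\geq k+r$ and $k$-rational $\iff \widetilde{\alpha}(X)>k+r$, and Theorem \ref{thm-mainHighCoefficientsi} with $(q,\mu)=(k-1,1)$, resp.\ $(q,\mu)=(k,0<\mu\ll 1)$, in both directions. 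Only Theorem \ref{thm-mainHighCoefficientsi} is needed here (Theorem \ref{tlowCoefficientsi} is used for the companion low-coefficient statement), but invoking the full band does no harm.
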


Similarly, Theorem \ref{tlowCoefficients} yields the following (see Theorem \ref{thm-mainLowCoefficients})).

\begin{thm} \label{thm-mainLowCoefficientsi}
Let $X\subset Y$ be a codimension $r$ local complete intersection in a smooth complex algebraic variety $Y$. If $X$ has $k$-Du Bois singularities, then
\begin{equation}\label{f40i}
[M_{y*}(X \subset Y)]_p=0 \  \text{for  all} \ p\leq k.
\end{equation}
If $X$ has $k$-rational singularities, then \eqref{f40i} holds and, moreover,
\begin{equation} \label{f41i}
[M_{y\ast}^{\{1\}}(X\subset Y)]_{k+1}=0.
\end{equation}

If ${\rm lct}(X) > r-1$ and ${\rm Sing}(X)$ is projective, then the converse implications hold.
\end{thm}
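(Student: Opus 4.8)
The plan is to transport the spectral vanishing of Theorem~\ref{tlowCoefficientsi} through the relation between the ordinary Hirzebruch–Milnor classes and their spectral refinement, established in Section~\ref{charc}. Concretely, I will use that $M^{sp}_{t\ast}(X\subset Y)$ refines $M_{y\ast}(X\subset Y)$ in the sense that, for every integer $p$,
\[ [M_{y\ast}(X\subset Y)]_p=\sum_{\lfloor\alpha\rfloor=p} M^{sp}_{t\ast}(X\subset Y)\vert_{t^{\alpha}}, \qquad [M^{\{1\}}_{y\ast}(X\subset Y)]_p= M^{sp}_{t\ast}(X\subset Y)\vert_{t^{p}}, \]
the unipotent part singling out the unique integral exponent $\alpha=p$ in the window $[p,p+1)$ (the non-unipotent part collecting the non-integral $\alpha\in(p,p+1)$). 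I will combine this with the minimal-exponent description of higher singularities: $X$ is $k$-Du Bois if and only if $\widetilde{\alpha}(X)\geq k+r$, and $k$-rational if and only if $\widetilde{\alpha}(X)> k+r$ (compare Corollary~\ref{cli} and Section~\ref{hdbhr}).

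For the forward implications I would first observe that the hypotheses are self-improving: $k$-Du Bois forces $\widetilde{\alpha}(X)\geq k+r\geq r$, hence $\mathrm{lct}(X)=\min\{r,\widetilde{\alpha}(X)\}=r>r-1$, so Theorem~\ref{tlowCoefficientsi} applies with no extra assumption. It gives $M^{sp}_{t\ast}\vert_{t^{\alpha}}=0$ for $\alpha<\widetilde{\alpha}(X)-r+1$, and $\widetilde{\alpha}(X)-r+1\geq k+1$; since every $\alpha$ with $\lfloor\alpha\rfloor=p\leq k$ satisfies $\alpha<k+1$, the first displayed identity yields $[M_{y\ast}]_p=0$ for all $p\leq k$, which is \eqref{f40i}. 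If $X$ is $k$-rational the inequality becomes strict, $\widetilde{\alpha}(X)-r+1>k+1$, so the spectral vanishing now covers $\alpha\leq k+1$; this reproves \eqref{f40i} and, evaluating the second identity at $p=k+1$, also gives $[M^{\{1\}}_{y\ast}]_{k+1}=M^{sp}_{t\ast}\vert_{t^{k+1}}=0$, i.e.\ \eqref{f41i}.

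For the converse I would assume $\mathrm{lct}(X)>r-1$ and $\Sing(X)$ projective, so that the converse half of Theorem~\ref{tlowCoefficientsi} is available and the threshold $\widetilde{\alpha}(X)-r+1$ is exactly the smallest exponent carrying a nonzero spectral coefficient. The delicate point—and the step I expect to be the main obstacle—is to pass from the vanishing of a \emph{sum} to the vanishing of its terms: I must show that $[M_{y\ast}]_p=0$ forces $M^{sp}_{t\ast}\vert_{t^{\alpha}}=0$ for each $\alpha\in[p,p+1)$, i.e.\ that no cancellation occurs inside a single $y$-window. This is the only place projectivity is essential: for $\Sing(X)$ projective the global multiplicities $\deg M^{sp}_{t\ast}\vert_{t^{\alpha}}$ are, up to a sign depending only on $p$ (the same for all $\alpha$ with $\lfloor\alpha\rfloor=p$), non-negative Hodge-theoretic dimensions, so a vanishing sum forces each summand to vanish. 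Granting this, \eqref{f40i} yields $M^{sp}_{t\ast}\vert_{t^{\alpha}}=0$ for all $\alpha<k+1$, whence $\widetilde{\alpha}(X)-r+1\geq k+1$ and $X$ is $k$-Du Bois. If in addition \eqref{f41i} holds, then $M^{sp}_{t\ast}\vert_{t^{k+1}}=[M^{\{1\}}_{y\ast}]_{k+1}=0$ is the vanishing of a \emph{single} coefficient—so here no positivity is needed—which pushes the smallest nonzero exponent strictly beyond $k+1$, giving $\widetilde{\alpha}(X)-r+1>k+1$ and hence $k$-rationality. The isolated-singularity case of Corollary~\ref{cli}, where the reduced-cohomology multiplicities are automatically of one sign, is exactly the prototype of the positivity I would need to globalize.
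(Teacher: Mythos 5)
Your overall route is the same as the paper's: combine the window decomposition \eqref{MHsp} with Theorem \ref{tlowCoefficientsi} and the minimal-exponent characterization of $k$-Du Bois and $k$-rational singularities (taking $q=k-1,\ \mu=1$, resp. $q=k,\ 0<\mu\ll 1$). Your forward implications are complete and correct, including the observation that $k$-Du Bois makes the hypothesis ${\rm lct}(X)>r-1$ automatic, and your treatment of \eqref{f41i} as a single spectral coefficient (so no cancellation issue arises there) is exactly right.

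The gap is in your justification of the no-cancellation step for the converse. You claim that, up to a common sign, the ``global multiplicities $\deg M^{sp}_{t\ast}(X\subset Y)\vert_{t^\alpha}$'' are non-negative Hodge-theoretic dimensions. This is not true in general: each coefficient is $s^*\td_*$ of the class in $K_0$ of a \emph{complex} ${\rm Gr}^F_{-p}{\rm DR}(\varphi_{h,\lambda}(\bQ^H_{\widetilde Y}))$, so its pushforward to a point is an alternating sum (an Euler characteristic of graded de Rham pieces), not a dimension; the one-sign phenomenon you cite for isolated singularities comes from concentration of Milnor cohomology in middle degree and does not globalize in the form you state. The mechanism that actually works — and that the paper's proofs of Theorems \ref{thm-mainHighCoefficients} and \ref{tlowCoefficients} supply via Theorem \ref{tLCIQ} and Corollary \ref{cor-lowestHodge} — is different: one argues by contradiction from $\widetilde{\alpha}(X)=r+p_0+\lambda_0$ too small, and notes that at the index $-(p_0+1)$ produced there, \emph{every} eigenvalue piece ${\rm Gr}^F_{-(p_0+1)}{\rm DR}(\varphi_{h,\lambda})$ is (Grothendieck dual to) the \emph{lowest} non-vanishing Hodge piece of that eigenvalue summand, hence a single coherent sheaf of the form $\pi^*\cF_\lambda$ with $\cF_\lambda$ coherent on $\Sigma_X$, and $\cF_{1-\lambda_0}\neq 0$. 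The top-dimensional parts of the classes $\td_*(\mathbb D_{\rm coh}\pi^*\cF_\lambda)$ are then, up to one sign depending only on the homological degree, the effective support cycles of the $\cF_\lambda$; effective cycles with a common sign cannot cancel, and on the projective $\Sigma_X$ a non-zero effective cycle has non-zero class. It is this effectivity of support cycles (the content of the reference to \cite[Proposition 1]{MSY23} and \cite[Example 18.3.19]{Fu} in the proofs of Theorems \ref{thm-mainHighCoefficients} and \ref{tlowCoefficients}), not positivity of spectral multiplicities, that rules out cancellation inside a $y$-window; without it your converse is incomplete.
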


{\bf Convention.} Our characteristic classes are defined for (certain) mixed Hodge modules, by using the De Rham complexes of the underlying filtered $\cD$-modules. Throughout this paper, we work with {\it right} filtered $\sD$-modules. 
\bigskip

{\bf Acknowledgment.} \  The project began at the workshop "Higher Du Bois and higher rational singularities" at the American Institute of Mathematics. The authors thank AIM for providing a supportive and mathematically rich environment. We would also like to thank Debaditya Raychaudhury, J\"org Sch\"urmann and Rosie Shen for useful discussions.  BD was supported by the National Science Foundation under Grant No. DMS-1926686. LM acknowledges support from the Simons Foundation and from the project ``Singularities and Applications'' - CF 132/31.07.2023 funded by the European Union - NextGenerationEU - through Romania's National Recovery and Resilience Plan.


\section{Higher Du Bois and higher rational singularities}\label{hdbhr}
In this section we recall the notions of higher Du Bois and higher rational singularities, and relations between them.

Let us begin by recalling that for a reduced complex algebraic variety $X$ there are two generalizations of the classical De Rham complex, namely:
\begin{itemize}
\item[(i)]  the {De Rham complex} $(\Omega^{\bullet}_X,F)$ of K\"ahler differentials,
\item[(ii)]  the {Du Bois complex} $(\underline{\Omega}^{\bullet}_X,F)$; see \cite{duBois}.
\end{itemize}
Moreover, there is a natural morphism of filtered complexes
\begin{equation}
(\Omega^{\bullet}_X,F) \to (\underline{\Omega}^{\bullet}_X,F),
\end{equation}
which is a filtered quasi-isomorphism if $X$ is smooth.

\medskip

In order to define higher Du Bois singularities, we introduce the following notation: for $p \geq 0$, set 
$$ \uuline{\Omega}^{p}_X:=gr_{-p}^F(\underline{\Omega}^{\bullet}_X)[p] \in D^b_{\rm coh}(X)$$
(where we use the convention for increasing Hodge filtrations).
Note that if $X$ is smooth, then $\uuline{\Omega}^{p}_X \simeq {\Omega}^p_X$.
Moreover, if $X$ has only quotient or toroidal singularities, then $$\uuline{\Omega}^{p}_X \simeq \widehat{\Omega}^p_X:=j_* \Omega^p_{X_{\rm reg}},$$ the $p$-th Zariski sheaf, for $j\colon X_{\rm reg} \hookrightarrow X$ the inclusion of the smooth locus.
\begin{definition}\cite{JKSY, MOPW}
For $k\geq 0$, the complex algebraic variety $X$ is said to have {\it $k$-Du Bois singularities} if the 
induced morphism
$$\Omega^p_X \to \uuline{\Omega}^p_X $$
 is an isomorphism in $D^b_{\mathrm{coh}}(X)$  for all $0\leq p \leq k$. 
\end{definition}
Note that when $k=0$, this recovers the usual notion of Du Bois singularities.

\medskip

Higher versions of rational singularities were introduced by Friedman-Laza in \cite{FL22a}.
\begin{definition}\cite{FL22a} 
(1) Assume $X$ is irreducible, with $\mu:(\widetilde{X},D) \to (X, X_{\rm sing})$ a log resolution of singularities.
Say that $X$ has {\it $k$-rational singularities} if the natural morphism
\[ \Omega^p_X \to  R\mu_{\ast}\Omega^p_{\widetilde{X}}(\log D)\]
is an isomorphism for all $0\leq p \leq k$.\\ 
(2) An arbitrary variety $X$ has $k$-rational
singularities if all its connected components are irreducible, with $k$-rational singularities.
\end{definition}
When $k=0$, this recovers the usual notion of rational singularities.

One has the following result relating the above notions of singularities (see also \cite{Kovacs99} for the case $k=0$).

\begin{thm}
Assume $X$ is a locally complete intersection in a smooth variety $Y$.
Then:
\begin{itemize}
\item[(a)] (Musta\c{t}\u{a}-Popa  \cite{MP22}, Friedman-Laza \cite{FL22a})
\begin{center}
$X$ is $k$-rational $\Longrightarrow$ $X$ is $k$-Du Bois.
\end{center} 
\item[(b)] (Musta\c{t}\u{a}-Popa \cite{MP22}, Chen-Dirks-Musta\c{t}\u{a} \cite{CDM})
\begin{center}
$X$ is $k$-Du Bois $\Longrightarrow$ $X$ is  $(k-1)$-rational.
\end{center}
\end{itemize}
\end{thm}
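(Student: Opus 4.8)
The plan is to deduce both implications from the minimal-exponent characterizations of the two classes of singularities for local complete intersections, which constitute the substance of the cited references and are recalled in Section \ref{minlct}. For a codimension $r$ lci subvariety $X \subset Y$ these characterizations read
\begin{align*}
X \text{ is } k\text{-Du Bois} &\iff \widetilde{\alpha}(X) \geq k+r, \\
X \text{ is } k\text{-rational} &\iff \widetilde{\alpha}(X) > k+r.
\end{align*}
Once these are available, both parts become purely numerical. For (a), $k$-rationality gives $\widetilde{\alpha}(X) > k+r$, hence a fortiori $\widetilde{\alpha}(X) \geq k+r$, so that $X$ is $k$-Du Bois. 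For (b), $k$-Du Bois gives $\widetilde{\alpha}(X) \geq k+r$, and since $k+r = (k-1)+r+1 > (k-1)+r$ this yields $\widetilde{\alpha}(X) > (k-1)+r$, i.e.\ $X$ is $(k-1)$-rational (the case $k=0$ being understood with the convention that $(-1)$-rationality is vacuous).

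The entire content therefore lies in establishing the two characterizations, and this is where I expect the main difficulty. The $k$-Du Bois characterization is proved by identifying the graded quotients $\uuline{\Omega}^p_X$ of the Du Bois complex with the graded pieces of the Hodge filtration on the de Rham complex of the mixed Hodge module $\bQ^H_X$ (following Saito), and then translating the isomorphism condition $\Omega^p_X \xrightarrow{\sim} \uuline{\Omega}^p_X$ for all $p \leq k$ into the lower bound $\widetilde{\alpha}(X) \geq k+r$ by means of the $V$-filtration description of the minimal exponent set up in \cite{CDMO}. The $k$-rational characterization proceeds in parallel, comparing instead with $R\mu_*\Omega^p_{\widetilde{X}}(\log D)$, but produces the \emph{strict} inequality. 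The delicate point---and the genuine obstacle---is exactly this shift by one between the non-strict bound governing $k$-Du Bois and the strict bound governing $k$-rationality, which reflects the subtle difference between the two theories at the level of the Hodge and pole-order filtrations.

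For a more self-contained treatment of part (a) on its own, one can bypass the sharp characterizations by exploiting the natural factorization
\[ \Omega^p_X \longrightarrow \uuline{\Omega}^p_X \longrightarrow R\mu_*\Omega^p_{\widetilde{X}}(\log D), \]
whose composite is precisely the comparison morphism defining $k$-rationality. If $X$ is $k$-rational, this composite is an isomorphism for all $p \leq k$, and a splitting argument of Kov\'acs type (as extended by Friedman-Laza) then forces the first arrow to be an isomorphism as well, giving $k$-Du Bois. The reverse shift in part (b), however, cannot be extracted from such a formal factorization, so I would ultimately prove both parts uniformly through the minimal exponent characterizations above.
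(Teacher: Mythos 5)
The paper states this theorem without proof, importing it from the cited references, so there is no internal argument to compare against; your reduction of both implications to the minimal-exponent characterizations ($X$ is $k$-Du Bois iff $\widetilde{\alpha}(X)\geq k+r$, $k$-rational iff $\widetilde{\alpha}(X)>k+r$, recorded in the paper as Theorem \ref{tLCIHighSings}) is correct, and the numerical steps ($>k+r\Rightarrow\ \geq k+r$ for (a), and $\geq k+r = (k-1)+r+1 > (k-1)+r$ for (b)) are exactly right. This is also the route taken in the Musta\c{t}\u{a}-Popa and Chen-Dirks-Musta\c{t}\u{a} references, and you correctly identify that all of the substance lives in the characterizations themselves rather than in the deduction.
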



\section{Higher singularities of hypersurfaces} 
We recall here motivating results which relate higher singularities to characteristic classes in the case of complex hypersurfaces, see \cite{MY, MSY23} for more details. For simplicity, we restrict here to the case of globally defined hypersurfaces, but see \cite{MSY23} for arbitrary hypersurfaces.

\subsection{Characterization via minimal exponents}\label{sec: k-Du Bois singularity via minimal exponents}
We first recall the characterization of higher Du Bois and higher rational singularities in terms of the minimal exponent. Let $Y$ be a smooth complex algebraic variety and let $f$ be a holomorphic function on $Y$ such that $X\colonequals f^{-1}(0)$ is reduced. The {\it minimal exponent} $\tilde{\alpha}_{f}$ is defined to be the smallest root of $b_f(-s)/(-s+1)$, where $b_f(s)$ is the Bernstein-Sato polynomial associated to $f$. Recall that $b_f(s)=s+1$ if, and only if, $X$ is smooth (in which case the minimal exponent is $\infty$, by convention). The minimal exponent is a refinement of the log canonical threshold $lct(f)$, which satisfies $lct(f)=\min \{1, \tilde{\alpha}_{f}\}$. Moreover, as shown in \cite[Theorem 0.4]{Saito94}, in our setting we always have that $\tilde{\alpha}_{f} \leq \frac{\dim Y}{2}.$

We collect below results from \cite[Theorem 1]{JKSY}, \cite[Theorem 1.1]{MOPW} and \cite[Theorem E]{MP22}.
\begin{thm}\label{thm: k du Bois singularity function via minimal exponent}
Let $Y$ be a smooth complex algebraic variety and let $f$ be a holomorphic function on $Y$ such that $X\colonequals f^{-1}(0)$ is reduced. Then
\begin{itemize}
\item $X$ has $k$-Du Bois singularities if and only if $\tilde{\alpha}_{f} \geq  k + 1$.
\item $X$ has $k$-rational singularities if and only if $\tilde{\alpha}_{f} >  k + 1$.
\end{itemize}
\end{thm}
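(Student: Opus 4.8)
The plan is to translate both conditions into statements about the Hodge filtration of the localized module $\cO_Y(*X)$ and then extract the numerical thresholds from the Bernstein--Sato polynomial via the microlocal $V$-filtration. First I would recall, following Saito's realization of the Du Bois complex as the de Rham complex of a mixed Hodge module, the identification of the graded Du Bois pieces $\uuline{\Omega}^p_X$ with the associated graded (in the de Rham direction) of the Hodge-filtered $\cD_Y$-module $(\cO_Y(*X),F_\bullet)$. Under this dictionary the comparison morphism $\Omega^p_X \to \uuline{\Omega}^p_X$ is an isomorphism exactly when the Hodge filtration $F_p\,\cO_Y(*X)$ agrees in the relevant degree with its expected value, namely the pole order filtration $P_p\,\cO_Y(*X)=\cO_Y((p+1)X)$ (the containment $F_p\subseteq P_p$ being automatic). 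Thus the $k$-Du Bois condition becomes the assertion that $F_p\,\cO_Y(*X)=P_p\,\cO_Y(*X)$ for all $p\le k$.

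Second, I would compute $F_\bullet\,\cO_Y(*X)$ through the graph embedding $i_f\colon Y\hookrightarrow Y\times\bA^1$ and the module $B_f=(i_f)_+\cO_Y$. Saito's formula expresses the Hodge filtration of the localization in terms of the $V$-filtration $V^\bullet B_f$ along $t=0$ together with the action of $\partial_t$, so the discrepancy between $F_\bullet$ and $P_\bullet$ is governed entirely by the $b$-function of $f$. The key numerical input is that $\widetilde{\alpha}_f$ is precisely the threshold controlling when the generators $\partial_t^{\,j}\delta$ of the pole-order pieces already sit in the correct step of the Hodge/$V$-filtration: one proves that $F_p\,\cO_Y(*X)=P_p\,\cO_Y(*X)$ holds for all $p\le k$ if and only if $\widetilde{\alpha}_f\ge k+1$. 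Combined with the first step, this yields the Du Bois equivalence (the content of \cite{JKSY,MOPW}).

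For the rational statement I would use that $k$-rational singularities are governed not by the localization but by the sharper comparison with $R\mu_*\Omega^p_{\widetilde{X}}(\log D)$, which translates into the strictly stronger filtration condition. Concretely, the passage from the Du Bois threshold replaces $\ge$ by $>$, so that $k$-rational is equivalent to $\widetilde{\alpha}_f>k+1$ (\cite{MP22}, Theorem E); the boundary value $\widetilde{\alpha}_f=k+1$ produces $k$-Du Bois but not $k$-rational, reflecting a single jumping value of the filtration comparison. This is consistent with part (b) of the previous theorem: $\widetilde{\alpha}_f\ge k+1$ (i.e.\ $k$-Du Bois) forces $\widetilde{\alpha}_f>k$, hence $(k-1)$-rational.

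The main obstacle is the second step: the precise comparison between the Hodge filtration and the $V$-filtration, and the extraction of the exact threshold $k+1$. This rests on Saito's results on the behavior of the Hodge filtration under localization and on the identification of $\widetilde{\alpha}_f$ with the relevant jumping number. Verifying that the equality $F_p=P_p$ holds for all $p\le k$ precisely when $\widetilde{\alpha}_f\ge k+1$ requires careful bookkeeping of how $\partial_t t$ acts on $\mathrm{gr}_V B_f$, and distinguishing the Du Bois threshold ($\ge$) from the rational threshold ($>$) requires the additional duality/injectivity arguments of \cite{MP22}; this is where essentially all of the genuine content of the theorem is concentrated.
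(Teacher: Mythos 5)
The paper offers no proof of this statement: it is presented purely as a compilation of results quoted from \cite{JKSY}, \cite{MOPW} and \cite{MP22}, so there is no internal argument to compare yours against. Your outline faithfully reproduces the strategy of those references --- translating the $k$-Du Bois condition into the triviality of the Hodge ideals, i.e.\ $F_p\,\cO_Y(*X)=P_p\,\cO_Y(*X)$ for $p\le k$, extracting the threshold $\tilde{\alpha}_f\ge k+1$ from the $V$-filtration on $(i_f)_+\cO_Y$, and obtaining the strict inequality for $k$-rationality from the duality arguments of \cite{MP22} --- and it is correct as a road map, with one small imprecision: the equivalence with the Du Bois comparison is established cumulatively (for all $p\le k$ at once), not degree by degree as your first paragraph momentarily suggests. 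Since you defer exactly the steps you yourself identify as carrying ``essentially all of the genuine content'' to the cited literature, what you have is a citation with commentary rather than a self-contained proof; but that is also all the paper does here, so the level of detail is appropriate.
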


\subsection{Characterization via the $V$-filtration and vanishing cycles}
Throughout this paper, we use right filtered $\sD$-modules. For example, if $Y$ is a connected smooth variety, then the trivial Hodge module $\bQ^H_Y[\dim Y]$ has underlying filtered $\sD_Y$-module $(\omega_Y,F)$ where
\[ F_{-\dim Y-1}\omega_Y = 0,\quad F_{-\dim Y} \omega_Y = \omega_Y.\]

With $Y, f, X$ as above, 
denote the graph embedding of $f:Y \to \bC$ by 
 \[i_f:Y\to Y\times \bC_t, \ x\mapsto (x,f(x)),\]
 where $t$ is the coordinate on $\bC$.  Denote by $(i_f)_{+}$  the direct image functor for filtered $\sD$-modules.
Recall that on a quasi-unipotent regular holonomic $\sD_Y$-module $\cM$ on $Y$ one has the $V$-filtration associated to $f$, which we index decreasingly and which satisfies the property that $t \d_t + \alpha$ is nilpotent on $\gr^{\alpha}_V\cM_f$, where 
 \[\cM_f\colonequals (i_f)_{+}\cM.\]
 
 Our convention here is that, if $\cM^\ell$ is the corresponding left $\sD_Y$-module which satisfies $\cM = \omega_Y \otimes_{\cO_Y} \cM^\ell$, then
 \[ V^\alpha \cM_f= \omega_{Y\times \mathbf C} \otimes_{\cO} (V^\alpha \cM_f^\ell),\]
 where $\d_t t - \alpha$ is nilpotent on ${\rm Gr}_V^\alpha (\cM^\ell)$.
 
If $M$ is a  mixed Hodge module with underlying filtered $\sD$-module $(\cM,F)$ with increasing filtration $F$, the filtered $\sD$-module underlying the vanishing cycle mixed Hodge module $\varphi^H_fM$ with the decomposition 
 \[ \varphi^H_fM = \varphi^H_{f,\neq 1}M \oplus \varphi^H_{f,1}M\]
 is 
 \begin{align*}
 \varphi^H_f\cM &=\bigoplus_{0\leq \alpha< 1}\gr^{\alpha}_V\cM_f,\\
 \varphi^H_{f,\neq 1}\cM&=\bigoplus_{0<\alpha <1} \gr^{\alpha}_V\cM_f, \quad  \varphi^H_{f,1}\cM=\gr^{0}_V\cM_f,
 \end{align*}
 with 
 \begin{align*}
 F_k \varphi^H_{f,\neq 1}\cM&=\bigoplus_{0<\alpha <1} F_{k-1}\gr^{\alpha}_V\cM_f, \quad F_k \varphi^H_{f,1}\cM=F_{k}\gr^{0}_V\cM_f.
 \end{align*}

For the filtered right $\sD_Y$-module $(\omega_Y,F)$ underlying the constant mixed Hodge module $\bQ^H_Y[\dim Y]$, we let 
\[ \cB_f\colonequals (i_f)_{+}\omega_Y \cong \omega_Y\otimes_{\bC}\bC[\d_t]\delta_f ,\]
where $\delta_f$ is a formal symbol which helps to keep track of the $\cD$-action:
\[  (\mu \delta_f)t = f \mu \delta_f, \quad (\mu \delta_f) D = (\mu D)\delta_f + D(f) \mu \d_t \delta_f, \, \mu \in \omega_Y, D \in {\rm Der}_{\mathbf C}(\cO_Y),\]

The Hodge filtration is defined by
\[ F_{k-\dim Y} \cB_f\colonequals \bigoplus_{0\leq \ell\leq k} \omega_Y \d_t^{\ell} \delta_f,\]
so that
\begin{equation}\label{eqn: associated graded of Hodge filtrations on Bf}
\gr^F_{k-\dim Y}\cB_f=\omega_X \d_t^k\delta_f , \quad \forall k\in \N.
\end{equation}

The following result was proved in \cite{MY, MSY23}.
\begin{thm}\label{t32} In the above notations,
\[  
\tilde{\alpha}_{f} \geq  k + 1 \iff F_{k-\dim Y} \gr^\alpha_V \mathcal{B}_f=0, \ \ 0 \leq \alpha <1.  
\] 
\[  \tilde{\alpha}_{f} >  k + 1 \iff  
 \begin{cases} 
F_{k-\dim Y} \gr^\alpha_V \mathcal{B}_f=0,  \ \  0 \leq \alpha <1,  \\ 
F_{k+1-\dim Y}\gr^0_V\mathcal{B}_f=0. 
\end{cases} 
\]
\end{thm}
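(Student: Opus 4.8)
The plan is to convert both equivalences into statements about the bigraded object $\gr^F_{\bullet}\gr^{\bullet}_V\cB_f$ and then to read off the minimal exponent from it. Throughout write $n\colonequals \dim Y$.

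\emph{Step 1 (reduction to bigraded vanishing).} First I would note that $F_{\bullet}\cB_f$ is a filtration by $V^0\sD$-submodules and that, since $(\cB_f,F,V)$ underlies a mixed Hodge module, $F$ and $V$ are mutually strict. Hence for each fixed $\alpha$ one has
\[ F_{k-n}\gr^{\alpha}_V\cB_f=0 \iff \gr^F_{j-n}\gr^{\alpha}_V\cB_f=0 \ \text{ for all } 0\le j\le k, \]
and by \eqref{eqn: associated graded of Hodge filtrations on Bf} the single graded piece $\gr^F_{j-n}\gr^{\alpha}_V\cB_f$ is controlled by the symbol of $\d_t^{j}\delta_f$. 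In particular the condition $F_{k+1-n}\gr^0_V\cB_f=0$ in the second equivalence coincides, once the vanishings of the first equivalence are known, with the single condition $\gr^F_{k+1-n}\gr^0_V\cB_f=0$. Thus the whole theorem reduces to understanding, for each $j\ge 0$ and each $\alpha\in[0,1)$, whether $\gr^F_{j-n}\gr^{\alpha}_V\cB_f$ is zero.

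\emph{Step 2 (the minimal exponent as a bigraded minimum).} The heart of the argument is the formula
\[ \tilde{\alpha}_f=\min\bigl\{\, j+\alpha \ :\ j\ge 0,\ 0\le \alpha<1,\ \gr^F_{j-n}\gr^{\alpha}_V\cB_f\ne 0 \,\bigr\}. \]
To prove it I would use Saito's comparison between the ordinary $V$-filtration and the microlocal $V$-filtration on $\cB_f$. The ordinary $V$-filtration only detects $\mathrm{lct}(f)=\min\{1,\tilde{\alpha}_f\}$, namely the $V$-order of $\delta_f$ itself; to see $\tilde{\alpha}_f$ past $1$ one must also use the higher pieces $\d_t^{j}\delta_f$ of the Hodge filtration, and organizing these through the microlocalization $\cB_f[\d_t^{-1}]$ is exactly the passage to the microlocal filtration $\widetilde V$, under which $\delta_f$ acquires $V$-order precisely $\tilde{\alpha}_f$ (uncapped). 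Unwinding the identification of $\gr_{\widetilde V}$ with $\bigoplus_j \gr^F_{j-n}\gr^{\bullet-j}_V\cB_f$ turns the microlocal $V$-order of $\delta_f$ into the displayed minimum; equivalently, the roots of $b_f(-s)$ are recovered as the values $j+\alpha$ over the nonzero bigraded pieces, and the factor $(-s+1)$ that is divided out in the definition of $\tilde{\alpha}_f$ is exactly the nearby-cycle (constant) contribution, which lives in $\gr^1_V\cB_f$ and therefore sits at $\alpha=1$, outside the range $[0,1)$ that records the vanishing-cycle pieces $\varphi^H_{f}$.

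\emph{Step 3 (range bookkeeping).} Granting the formula, both equivalences become elementary. Since $j+\alpha$ is strictly increasing in $j$ while $\alpha\mapsto\alpha$ is a bijection of $[0,1)$, a value $v=j+\alpha<k+1$ with $\alpha\in[0,1)$ occurs precisely when $j\le k$; hence
\[ \tilde{\alpha}_f\ge k+1 \iff \gr^F_{j-n}\gr^{\alpha}_V\cB_f=0 \ \text{ for all } j\le k,\ \alpha\in[0,1), \]
which by Step 1 is the first stated equivalence. For the strict inequality one must in addition rule out the single boundary value $v=k+1$; with $\alpha\in[0,1)$ this forces $(j,\alpha)=(k+1,0)$, i.e. the extra condition $\gr^F_{k+1-n}\gr^0_V\cB_f=0$, which by Step 1 is the supplementary hypothesis $F_{k+1-n}\gr^0_V\cB_f=0$. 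This yields the second equivalence. I expect Step 2 to be the crux: making precise the passage from the ordinary $V$-filtration (which only sees $\min\{1,\tilde{\alpha}_f\}$) to the microlocal one through the Hodge filtration, so that $\tilde{\alpha}_f$ is read off as a minimum over the $\gr^F\gr_V$-pieces. The delicate point inside this is the bookkeeping at the integer boundary $\alpha\in\{0,1\}$: distinguishing the vanishing-cycle piece at $\gr^0_V$ from the nearby-cycle piece at $\gr^1_V$ is exactly what produces the discrepancy between the two equivalences, and is the reason the factor $(-s+1)$ is removed from $b_f(-s)$.
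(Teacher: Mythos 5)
The paper does not actually prove Theorem \ref{t32}: it is quoted from \cite{MY,MSY23}, so there is no internal proof to compare against. Your outline follows the same route as those references, and as the closely related arguments the paper \emph{does} carry out in the lci setting: your Step~2 formula $\tilde{\alpha}_f=\min\{\,j+\alpha : j\ge 0,\ \alpha\in[0,1),\ \gr^F_{j-\dim Y}\gr^{\alpha}_V\cB_f\ne 0\,\}$ is precisely the $r=1$ instance of the paper's Theorem \ref{tLCIQ} (up to the reindexing between $\gr_V^{\alpha}$ for $\alpha\in[0,1)$ and $\varphi_{h,\lambda}$ for $\lambda\in(0,1]$), and your Step~3 is the same boundary bookkeeping the paper performs when deducing Theorem \ref{tLCIQ} from Theorem \ref{tLCI}. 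Steps~1 and~3 are correct as written; note that Step~1 needs only that $F$ is increasing, exhaustive, and bounded below by $F_{-\dim Y-1}=0$, so the appeal to strictness of $(F,V)$ is unnecessary.

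The crux is, as you say, Step~2, and there the one imprecise point is the form in which you invoke the microlocal theory. The asserted identification of $\gr_{\widetilde V}$ with $\bigoplus_j \gr^F_{j-\dim Y}\gr_V^{\bullet-j}\cB_f$ is not literally the statement you need, and the remark that ``the roots of $b_f(-s)$ are recovered as the values $j+\alpha$ over the nonzero bigraded pieces'' conflates the jumps of $\widetilde V$ on the cyclic submodule generated by $\delta_f$ with the jumps on all of $\cB_f[\d_t^{-1}]$. The clean input from Saito's microlocal theory (used in \cite{JKSY,MOPW,MY,MSY23}) is: for $\alpha\in(0,1]$ and $p\ge 0$, one has $\tilde{\alpha}_f\ge p+\alpha$ if and only if $F_{p-\dim Y}\gr_V^{\chi}\cB_f=0$ for all $\chi\in[0,\alpha)$ --- the $r=1$ case of Theorem \ref{tLCI}. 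Granting that, your min formula follows by the bookkeeping you already describe (take the supremum over admissible $(p,\alpha)$, use discreteness of the $V$-filtration to see it is attained at the smallest nonvanishing bigraded piece, and treat the case $\alpha=1$ separately, which is exactly where the extra condition $F_{k+1-\dim Y}\gr_V^0\cB_f=0$ in the second equivalence comes from), and both displayed equivalences then drop out. So the proposal is correct in substance, provided Step~2 is anchored to that precise equivalence rather than to the heuristic description of the associated graded of the microlocal filtration.
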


\begin{corollary}
Let $Y$ be a smooth complex algebraic variety and let $f\colon Y \to \bC$ be a non-constant holomorphic function such that $X=f^{-1}(0)$ is reduced. Then, for any $k\in \N$,  
$X$ has $k$-Du Bois singularities 
if and only if
\[\begin{cases}
F_{k+1-\dim Y} \varphi^H_{f,\neq 1}(\bQ^H_Y[\dim Y])=0,\\
F_{k - \dim Y}\varphi^H_{f,1}(\bQ^H_Y[\dim Y])=0.
\end{cases} \]
Similarly, $X$ has $k$-rational singularities
if and only if
\[ F_{k+1-\dim Y}\varphi^H_{f}(\bQ^H_Y[\dim Y])=0.\]
\end{corollary}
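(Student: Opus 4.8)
The plan is to deduce this Corollary directly from Theorem \ref{t32} by translating the vanishing conditions on the Hodge filtration of $\gr^\alpha_V \cB_f$ into the stated vanishing conditions on the Hodge filtration of the vanishing cycle mixed Hodge modules. The key bridge is the explicit description, given in the preamble to Theorem \ref{t32}, of the filtered $\sD$-modules underlying $\varphi^H_{f,\neq 1}\cM$ and $\varphi^H_{f,1}\cM$ in terms of the $V$-filtration, specialized to the case $\cM = \omega_Y$ (so that $\cM_f = \cB_f$).

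First I would recall, from the displayed formulas just before Theorem \ref{t32}, that
\begin{align*}
F_k \varphi^H_{f,\neq 1}\cB_f &= \bigoplus_{0<\alpha<1} F_{k-1}\gr^\alpha_V \cB_f, \\
F_k \varphi^H_{f,1}\cB_f &= F_k \gr^0_V \cB_f.
\end{align*}
Setting $k = k+1-\dim Y$ in the first line, I see that $F_{k+1-\dim Y}\varphi^H_{f,\neq 1}\cB_f = \bigoplus_{0<\alpha<1} F_{k-\dim Y}\gr^\alpha_V\cB_f$, which vanishes precisely when $F_{k-\dim Y}\gr^\alpha_V\cB_f = 0$ for all $0 < \alpha < 1$. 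Similarly, $F_{k-\dim Y}\varphi^H_{f,1}\cB_f = F_{k-\dim Y}\gr^0_V\cB_f$, which vanishes precisely when the $\alpha = 0$ piece vanishes at level $k-\dim Y$. Taken together, these two vanishings are exactly the condition $F_{k-\dim Y}\gr^\alpha_V\cB_f = 0$ for all $0 \leq \alpha < 1$ appearing on the right-hand side of the first equivalence in Theorem \ref{t32}. By that theorem this is equivalent to $\widetilde{\alpha}_f \geq k+1$, which by Theorem \ref{thm: k du Bois singularity function via minimal exponent} is equivalent to $X$ having $k$-Du Bois singularities. This proves the first equivalence of the Corollary.

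For the rational case, I would use that $\varphi^H_f\cB_f = \varphi^H_{f,\neq 1}\cB_f \oplus \varphi^H_{f,1}\cB_f$, so the single vanishing $F_{k+1-\dim Y}\varphi^H_f\cB_f = 0$ is equivalent to the simultaneous vanishing of both summands at level $k+1-\dim Y$. The $\neq 1$ summand contributes $\bigoplus_{0<\alpha<1}F_{k-\dim Y}\gr^\alpha_V\cB_f = 0$ exactly as above, while the unipotent summand now gives $F_{k+1-\dim Y}\gr^0_V\cB_f = 0$. These are precisely the two conditions in the second equivalence of Theorem \ref{t32}, which characterizes $\widetilde{\alpha}_f > k+1$, equivalently (by Theorem \ref{thm: k du Bois singularity function via minimal exponent}) $k$-rational singularities. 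This completes the argument.

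The only genuine subtlety, and the step I would flag as the place to be careful, is the bookkeeping of the index shift by one in the $\neq 1$ summand: the definition of the Hodge filtration on $\varphi^H_{f,\neq 1}$ lowers the index by one relative to $\gr^\alpha_V$, whereas the unipotent part $\varphi^H_{f,1}$ carries no such shift. Getting the shift right is exactly what aligns $F_{k+1-\dim Y}\varphi^H_{f,\neq 1}$ with $F_{k-\dim Y}\gr^\alpha_V$ while matching the $\gr^0_V$ contributions at the stated levels; once this is tracked consistently, the Corollary is a formal consequence of Theorem \ref{t32} combined with the minimal exponent characterization in Theorem \ref{thm: k du Bois singularity function via minimal exponent}.
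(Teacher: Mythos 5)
Your proposal is correct and follows exactly the route the paper intends (the paper states this corollary without proof, as an immediate consequence of Theorem \ref{t32}, Theorem \ref{thm: k du Bois singularity function via minimal exponent}, and the displayed filtration formulas for $\varphi^H_{f,\neq 1}$ and $\varphi^H_{f,1}$). The only point you leave implicit in the rational case is that your decomposition yields the condition $F_{k-\dim Y}\gr^\alpha_V\cB_f=0$ only for $0<\alpha<1$ together with $F_{k+1-\dim Y}\gr^0_V\cB_f=0$, whereas Theorem \ref{t32} also lists $F_{k-\dim Y}\gr^0_V\cB_f=0$; this is harmless since the Hodge filtration is increasing, so the latter is subsumed by $F_{k+1-\dim Y}\gr^0_V\cB_f=0$.
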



\subsection{Characterization via spectral Hirzebruch classes}\label{ccc}
For a complex algebraic variety $X$, we let $\MHM(X)$ denote the abelian category of algebraic mixed Hodge modules on $X$, cf. \cite{Saito90}. We use $H_i(X)$ in place of either $H_{2i}^{BM}(X;\bQ)$ or $CH_i(X)_\bQ$, and let $K_0(X):=K_0(Coh(X))$. We next recall the definition of Hirzebruch classes, and of their spectral versions.

First, we introduce the motivic Chern class transformation
\[
 \DR_y: K_0(\mhm(X)) \to  K_0(X)[y,y^{-1}],
\]
which is defined as follows. If $X$ is smooth and $M\in \mhm(X)$, let $(\cM,F_{\bullet}\cM)$ be the underlying filtered $\sD_X$-module of $M$, and set
\[
\begin{split}
\DR_y[M] &\colonequals \sum_{p}  \left[\gr^F_{-p}\DR(\cM)\right] \cdot(-y)^p  \\
&= \sum_{p,i} (-1)^i \left[\cH^i \gr^F_{-p}\DR(\cM)\right] \cdot(-y)^p \in K_0(X)[y,y^{-1}],
\end{split}
\]
with $\gr^F_{-p}\DR(\cM)$ the graded parts of the de Rham complex of $\cM$ with respect to the induced Hodge filtration.
The definition of $\DR_y$ extends to the case when $X$ is singular by using locally defined closed embeddings into smooth varieties, since the graded quotient cohomology sheaves $\cH^i\gr^F_{p}\DR(\cM)$ are independent of local embeddings and are $\cO_X$-modules. Furthermore, it can also be extended to complexes $M^\bullet \in D^b\mhm(X)$ by applying it to each cohomology module $H^iM^\bullet\in \mhm(X)$, $i \in \Z$. 

\begin{definition}\cite{BSY,S}
The (un-normalized) {\it Hirzebruch class transformation}  is defined by 
\begin{align*}
T_{y\ast}: K_0(\mhm(X)) &\to H_*(X)[y, y^{-1}] \\
[M^\bullet] &\mapsto \td_{\ast} (\DR_y[M^\bullet]),
\end{align*}
where $\td_{\ast}:K_0(X) \to H_\ast(X)$
is the 
Baum--Fulton--MacPherson Todd class transformation \cite{BFM}, which is linearly extended over $\bZ[y,y^{-1}]$. A normalized Hirzebruch class transformation $\widehat{T}_{y\ast}$ is then defined by precomposing $T_{y\ast}$ with a homological Adams-type operation $\{\Psi_k\}_{k \geq 0}$, which on a class $\gamma \in H_k(X)$ acts by $\Psi_k(\gamma)=(1+y)^{-k} \cdot \gamma$.
\end{definition}

\begin{remark}
Over a point, the transformations $T_{y\ast}$ and $\widehat{T}_{y\ast}$ coincide with the Hodge polynomial homomorphism $\chi_y:K_0(\MHS^p) \to \bZ[y^{\pm 1}]$ defined on the Grothendieck group of (graded) polarizable mixed Hodge structures by
$$\chi_y([H]):=\sum_p \dim \gr^p_F(H \otimes \bC) \cdot (-y)^p,$$
for $F$ the (decreasing) Hodge filtration of $H \in \MHS^p$. Here we use the well-known equivalence of categories $\MHM(pt) \simeq \MHS^p$.
\end{remark}

By applying the above Hirzebruch class transformations to a vanishing cycle complex, one gets (an un-normalized version of) the {\it Hirzebruch--Milnor classes} introduced in \cite{CMSS,MSS1}. Let  
$Y$ be a smooth complex algebraic variety and let $f\colon Y\to \bC$ be a non-constant holomorphic function. Set $X\colonequals f^{-1}(0)$. Let $\varphi^H_f$ be the vanishing cycle functor of mixed Hodge modules. We will work with the shifted functor \begin{equation}\label{conv} \varphi_f:=\varphi^H_f[1],\end{equation} whose underlying functor on constructible complexes is the Deligne vanishing cycle functor.
\begin{definition}\label{hms}
The (un-normalized) {\it Hirzebruch--Milnor class} of the hypersurface $X=f^{-1}(0)$ is defined by:
\begin{equation}\label{HMc}
\begin{split}
    M_{y\ast}&(X):=T_{y\ast}(\varphi_f\bQ^H_Y)\\
    &=\td_\ast \left( \sum_p \left[\gr^F_{-p}\DR(\varphi_f\bQ^H_Y)\right] \cdot(-y)^p \right) \in H_*(\Sing(X))[y],
    \end{split}
\end{equation} 
where $\bQ_Y^H$ is the shifted constant Hodge module on $Y$. (Here we use the functoriality of $T_{y*}$ for proper maps, and the fact that $\varphi_f\bQ^H_Y$ is supported on $\Sing(X)$ to view $M_{y*}(X)$ as a localized class on $\Sing(X)$.)
One can define similarly a normalized version $\widehat{M}_{y\ast}(X)$ of the Hirzebruch--Milnor class by using instead $\widehat{T}_{y\ast}$ in \eqref{HMc}. \end{definition}

\begin{example}\label{ex37}
If the hypersurface $X$ has only isolated singularities, then $\varphi_f\bQ_Y$ is supported only at these singular points, and we get (cf. \cite[Example 3.6]{CMSS}):
$$M_{y\ast}(X)=\widehat{M}_{y\ast}(X)=\sum_{x \in \Sing(X)} \chi_y(i_x^* \varphi_f\bQ^H_Y)=\sum_{x \in \Sing(X)} \chi_y([\widetilde{H}^*(F_x;\bQ)]),$$
where $i_x:\{x\}\hookrightarrow X$ is the point inclusion, and $F_x$ denotes the Milnor fiber of the isolated hypersurface singularity germ $(X,x)$.
\end{example}

The Hirzebruch class transformation $\widehat{T}_{y\ast}$ has been lifted in \cite{MSS} to a spectral version, i.e., a characteristic class version of the Hodge spectrum. Here we work (as in \cite{MY}) mainly with an un-normalized version of this class.
As in the previous section, let us first assume that $X$ is a smooth complex algebraic variety. Let $\mhm(X,T_s)$ be the abelian category of mixed Hodge modules $M$ on $X$ which are endowed with an action of $T_s$ of finite order. For $(M, T_s) \in \mhm(X,T_s)$ with $T_s^e=\mathrm{Id}$, let $(\cM,F_{\bullet}\cM)$ be the underlying filtered $\sD_X$-module of $M$.  There is a canonical decomposition 
$$(\cM,F_{\bullet})=\sum_{\lambda\in \mu_e} (\cM_\lambda,F_{\bullet}),$$
such that $T_s=\lambda\cdot \mathrm{Id}$ on $\cM_\lambda$, where $\mu_e=\{\lambda \in \bC \mid \lambda^e=1\}$. 
With these notations, 
we define the {\it spectral motivic Chern class} of $(M, T_s)$ by
\begin{align*}
\DR_t[M, T_s] := \sum_{p,\lambda} [\gr^F_{-p}\DR(\cM_\lambda)] \cdot  t^{p+\ell(\lambda)} \in K_0(X)[t^{ 1/e},t^{-1/e}],
\end{align*}
where $ \ell(\lambda)$ is the unique number in $[0,1)$ such that $\exp({2\pi i \ell(\lambda)})=\lambda$. As in the case of the motivic Chern class transformation, the above definition extends to the case when $X$ is singular and also to complexes $(M^{\bullet}, T_s)$ of mixed Hodge modules on $X$ endowed with the action of a finite order automorphism $T_s$.  

Let $K^{mon}_0(\mhm(X))$ denote the Grothendieck group of mixed Hodge modules on $X$ endowed with a finite order automorphism. 
\begin{definition}\cite{MSS}
The (un-normalized) {\it spectral Hirzebruch transformation} is defined by
\[ T^{sp}_{t\ast}\colon  K^{mon}_0(\mhm(X)) \to \bigcup_{e\geq 1} H_\ast(X)\left[t^{1/e}, t^{-1/e}\right]\]
\[
[M^{\bullet},T_s]\mapsto \td_{\ast}\left(\DR_t[M^{\bullet}, T_s]\right)  ,
\]
where $\td_\ast$ is as before the Todd class transformation. A normalized spectral Hirzebruch class transformation $\widehat{T}^{sp}_{t\ast}$ is then defined by precomposing $T^{sp}_{t\ast}$ with a homological Adams-type operation $\{\Psi_k\}_{k \geq 0}$, which on a class $\gamma \in H_k(X)$ acts by $\Psi_k(\gamma)=(1-t)^{-k} \cdot \gamma$.
\end{definition}

\begin{remark}\label{r39}
Over a point, the transformations $T^{sp}_{t\ast}$ and $\widehat{T}^{sp}_{t\ast}$ coincide with the Hodge spectrum homomorphism (see \cite{DL, GLM})
$${\rm Sp}' :K^{mon}_0(\MHS) \to \bZ[\bQ] \simeq \bigcup_{e\geq 1} \bZ[t^{\pm 1/e}]$$ defined on the Grothendieck ring of the abelian category $\MHS^{mon}$ of mixed Hodge structures endowed with a finite order automorphism by 
$${\rm Sp}'([H]):=\sum_{p, \lambda} ( \dim \gr^p_F H_{\bC,\lambda}) \cdot t^{p+\ell(\lambda)}$$
for any mixed Hodge structure $H \in \MHS^{mon}$ with an automorphism $T$ of finite order, 
where $F$ is the (decreasing) Hodge filtration on $H_\bC=H \otimes \bC$, and $H_{\bC,\lambda}=\ker(T-\lambda)$ is the eigenspace of $T$ with eigenvalue $\lambda=\exp(2\pi i \ell(\lambda))$ and $ \ell(\lambda)\in \bQ \cap [0,1)$. In \cite{Saito91}, whose notation we follow for the operation on mixed Hodge structures, this is called the ``dual spectrum''.

Note that the Hodge polynomial $\chi_y$ is obtained from the Hodge spectrum ${\rm Sp}'$ by forgetting the $T$-action and substituting $t=-y$.
\end{remark}

Let us now restrict to the case of globally defined hypersurfaces. Let $Y$ be a smooth complex algebraic variety and let $f\colon Y \to \bC$ be a non-constant holomorphic function with $X=f^{-1}(0)$. Let 
\[\varphi_f\bQ^H_Y \in \mhm(X)[-\dim(X)]\]
be the (shifted) vanishing cycle mixed Hodge module, and let $T_s$ be the semi-simple part of the monodromy. Then 
one can introduce the following un-normalized version of the spectral Hirzebruch class from \cite{MSS}.
\begin{definition}\label{definition: spectral Hirzebruch-Milnor class}
In the above notations, the (un-normalized) {\it localized spectral Hirzebruch--Milnor class} of the hypersurface $X$ is defined by:
\[ {M^{sp}_{t*}(X):=T^{sp}_{t*}(\varphi_f\bQ^H_Y,T_s)} \in H_*(\Sing(X))[t^{1/ord(T_s)}].\]
We can similarly define a normalized version $\widehat{M}^{sp}_{t*}(X)$ of this class by using $\widehat{T}^{sp}_{t\ast}$ instead.
\end{definition}

\begin{example}\label{ex311}
If the hypersurface $X$ has only isolated singularities, we get as in Example \ref{ex37} that 
$$M^{sp}_{t*}(X)=\widehat{M}^{sp}_{t*}(X) = \sum_{x \in \Sing(X)} {\rm Sp}'(X,x) \coloneq \sum_{x \in \Sing(X)} {\rm Sp}'(i_x^* \varphi_f\bQ^H_Y)$$
where the action is given by the semi-simple part of the Milnor monodromy action for each isolated hypersurface singularity germ $(X,x)$.
\end{example}

\begin{remark} It is important to note that what we call ${\rm Sp}'(X,x)$ differs from ${\rm Sp}'(X,x)$ as defined in \cite{Saito91}, by a sign $(-1)^{\dim X}$. Indeed, in \textit{loc. cit.}, the dual spectrum at $x\in X$ is defined by ${\rm Sp}'(i_x^*\varphi_f^H(\mathbf Q_Y^H[\dim Y]))$.
\end{remark}

\medskip

As already shown in \cite{MSS}, the spectral Hirzebruch--Milnor class $M^{sp}_{t\ast}(X)$ of $X=f^{-1}(0)$ is sensitive to $X$ having rational or Du Bois singularities. The results of \cite{MSS} were further extended in \cite{MY, MSY23} to higher notions of these singularities as follows. 
We make the following.
\begin{notation}
For $\alpha \in \bQ$, we denote the coefficient of $t^\alpha$ in the spectral Hirzebruch-Milnor class $M^{sp}_{t\ast}(X)$ of $X$ by 
\[ M^{sp}_{t\ast}(X)|_{t^{\alpha}}\in H_{\ast}(\Sing(X)).\] 
\end{notation}
Then the following holds.
\begin{thm}\cite{MY,MSY23}\label{dbrsp}
With the above notations, if $X$ has $k$-Du Bois singularities then $M^{sp}_{t*}(X)|_{t^\alpha} = 0$ for all $\alpha < k+1$, 
with the converse implication being true if $\Sing(X)$ is projective.

Moreover, if $X$ has $k$-rational singularities then $M^{sp}_{t*}(X)|_{t^\alpha} = 0$ for all $\alpha \leq k+1$, 
with the converse implication being true if $\Sing(X)$ is projective.
\end{thm}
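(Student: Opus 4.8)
The plan is to reduce the statement of Theorem~\ref{dbrsp} to the two equivalences already established in Theorem~\ref{t32}, by unwinding the definition of the spectral Hirzebruch--Milnor class $M^{sp}_{t\ast}(X)$ in terms of the graded pieces of the $V$-filtration on $\cB_f$. First I would recall that, by Definition~\ref{definition: spectral Hirzebruch-Milnor class}, $M^{sp}_{t\ast}(X) = T^{sp}_{t\ast}(\varphi_f \bQ_Y^H, T_s)$, and that $\varphi_f^H \bQ_Y^H[\dim Y]$ has underlying filtered $\sD$-module $\bigoplus_{0 \leq \alpha < 1} \gr_V^\alpha \cB_f$, with the eigenvalue-$\lambda$ piece being $\gr_V^\alpha \cB_f$ for $\lambda = \exp(2\pi i \alpha)$. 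The key bookkeeping step is to match the internal exponent shifts: the spectral motivic Chern class weights $\gr_{-p}^F \DR(\cM_\lambda)$ by $t^{p + \ell(\lambda)}$, and because $\ell(\exp(2\pi i\alpha)) = \alpha$ for $\alpha \in [0,1)$, together with the Hodge-filtration shifts recorded in the definition of $F_k \varphi_{f,\neq 1}^H$ and $F_k \varphi_{f,1}^H$, the coefficient of $t^\beta$ in $M^{sp}_{t\ast}(X)$ is governed precisely by the graded pieces $\gr_{k - \dim Y}^F \gr_V^\alpha \cB_f$ with $\beta = \alpha + (\text{appropriate }k)$.

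Next I would translate the vanishing statements. The Du Bois direction asserts that $k$-Du Bois singularities force $M^{sp}_{t\ast}(X)|_{t^\alpha} = 0$ for all $\alpha < k+1$. By Theorem~\ref{t32}, the $k$-Du Bois condition $\tilde\alpha_f \geq k+1$ is equivalent to $F_{k - \dim Y}\gr_V^\alpha \cB_f = 0$ for all $0 \leq \alpha < 1$. Feeding this into the exponent bookkeeping above shows that every graded de Rham piece contributing a monomial $t^\beta$ with $\beta < k+1$ must vanish, since such a monomial can only arise from a piece $\gr_{j - \dim Y}^F \gr_V^\alpha \cB_f$ with $j \leq k$, which is zero. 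This gives the forward implication for Du Bois, and the rational case $\tilde\alpha_f > k+1$ is handled identically using the second equivalence in Theorem~\ref{t32}, which supplies the extra vanishing $F_{k+1 - \dim Y}\gr_V^0 \cB_f = 0$ needed to kill the boundary coefficient at $t^{k+1}$ (i.e. to extend the range from $\alpha < k+1$ to $\alpha \leq k+1$).

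The converse direction, under the hypothesis that $\Sing(X)$ is projective, is where the genuine geometric input is needed and is the part I expect to be the main obstacle. The issue is that $M^{sp}_{t\ast}(X)|_{t^\alpha}$ is the image under the Todd class transformation $\td_\ast$ of the class $[\gr_{-p}^F \DR(\cM_\lambda)] \in K_0(\Sing(X))$, so a priori vanishing of the \emph{characteristic class} in $H_\ast(\Sing(X))$ is weaker than vanishing of the underlying coherent sheaf complex. To recover the sheaf-level vanishing from the class-level vanishing, the standard device (as in \cite{MY, MSY23}) is to use that $\td_\ast$ restricted to a projective variety detects nonzero effective classes: concretely, one applies the degree map, i.e. pushes forward to a point, and uses that for a nonzero coherent sheaf (or nonnegative combination thereof) on a projective $\Sing(X)$ the associated Euler-characteristic-type invariant is nonvanishing. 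Thus I would argue by contrapositive: if the relevant $F_{k-\dim Y}\gr_V^\alpha \cB_f$ is nonzero, then since it is a coherent sheaf supported on the projective variety $\Sing(X)$ its Todd-class image is a nonzero homology class (detected by a suitable degree/Euler characteristic), contradicting the assumed vanishing of the coefficient. The care required is to ensure the different eigenvalue and filtration pieces contributing to a fixed power $t^\beta$ do not cancel one another in homology; projectivity is exactly what prevents such cancellation, because it makes each graded contribution individually detectable. Once this detection principle is in place, the converse follows by reversing the bookkeeping and invoking Theorem~\ref{t32} to pass from the $V$-filtration vanishing back to the inequality on $\tilde\alpha_f$, hence to the $k$-Du Bois or $k$-rational condition via Theorem~\ref{thm: k du Bois singularity function via minimal exponent}.
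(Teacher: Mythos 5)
First, note that the paper does not prove Theorem \ref{dbrsp}; it is quoted from \cite{MY,MSY23}. The relevant techniques are, however, on display in the paper's proofs of the lci generalizations (Theorems \ref{thm-mainHighCoefficients} and \ref{tlowCoefficients}), and comparing your argument with those reveals a genuine gap: your exponent bookkeeping is reversed. In $\DR_t[M,T_s]=\sum_{p,\lambda}[\gr^F_{-p}\DR(\cM_\lambda)]\,t^{p+\ell(\lambda)}$, the coefficient of a \emph{low} power $t^\beta$ with $\beta<k+1$ (so $p\leq k$) is $\gr^F_{-p}\DR$ with $-p\geq -k$, i.e.\ it is governed by the \emph{high} part of the increasing Hodge filtration. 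Writing $-p=j-\dim Y$, a piece $\gr^F_{j-\dim Y}\gr^\alpha_V\cB_f$ with $j\leq k$ contributes to $t^{\dim Y-j+\ell(\lambda)}$, hence to exponents $\beta\geq \dim Y-k$ --- the opposite end of the spectrum from $\beta<k+1$ (these ranges are disjoint, since $\widetilde{\alpha}_f\leq \dim Y/2$ forces $k+1\leq \dim Y-k$). So the vanishing $F_{k-\dim Y}\gr^\alpha_V\cB_f=0$ supplied by Theorem \ref{t32} directly proves the \emph{high}-coefficient vanishing (the hypersurface case of Theorem \ref{thm-mainHighCoefficientsi}), not the statement of Theorem \ref{dbrsp}.

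The missing ingredient is Hodge-theoretic duality. Since $\bQ^H_Y[\dim Y]$ is pure and polarizable, one has $\mathbf D(\varphi_{f,\lambda}(\bQ^H_Y[\dim Y]))(-\dim Y)\cong\varphi_{f,\bar\lambda}(\bQ^H_Y[\dim Y])$, and Grothendieck--Serre duality intertwines the graded de Rham functors, $\mathbb D_{\rm coh}\,\gr^F_{q}\DR(M)\cong \gr^F_{-q}\DR(\mathbf D M)$ (\cite[Sect.\ 2.4]{Saito88}). This converts the low-Hodge-index vanishing of Theorem \ref{t32} into the high-Hodge-index vanishing that actually controls $M^{sp}_{t*}(X)|_{t^\beta}$ for $\beta<k+1$; this is precisely the mechanism of Section \ref{dua} (proof of Theorem \ref{tlowCoefficients}), and for the converse one additionally needs that $\td_*$ is compatible with $\mathbb D_{\rm coh}$ up to signs (\cite[Example 18.3.19]{Fu}). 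Two smaller remarks on your converse: the worry about cancellation between different eigenvalue pieces at a fixed $t^\beta$ is unfounded, since $\beta$ determines the pair $(p,\ell(\lambda))$ uniquely --- that is the whole point of the spectral refinement; the real issue is that $\gr^F_{-p}\DR(\cM_\lambda)$ is a complex whose $K$-theory class could a priori vanish without acyclicity, and this is handled by arguing at the \emph{first} nonvanishing index, where the complex reduces to a single nonzero coherent sheaf whose class is then detected by $\td_*$ on the projective variety $\Sing(X)$ via \cite[Corollary 18.3.2]{Fu}.
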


For some examples when the assumption that $\Sing(X)$ is projective is satisfied, see \cite{Saito23}. As noted in \cite[Remark 1]{MSY23}, one cannot replace projectivity by compactness in the assumption of the converse assertions in Theorem \ref{dbrsp}. On the other hand, one cannot omit the compactness assumption, since the Chow groups of affine varieties are usually small.

\begin{remark}\label{rem314}
When the hypersurface $X=f^{-1}(0)$ has only isolated singularities, Theorem \ref{dbrsp} provides necessary and sufficient conditions for these singularities to be $k$-Du Bois, resp., $k$-rational, formulated in terms of the classical Hodge spectrum. More precisely, since the spectral Hirzebruch--Milnor class is a characteristic class version of the notion of Hodge spectrum ${\rm Sp'}(X,x)=\sum_{\alpha \in \bQ} n_\alpha(f,x) t^\alpha$ of a hypersurface singularity germ $(X,x)$, cf. Remark \ref{r39}, Theorem \ref{dbrsp} reads in the case of $X$ with only isolated singularities as follows: $X$ has only $k$-Du Bois singularities if, and only if, $n_\alpha(f,x) = 0$ for all $\alpha < k+1$, and all $x\in \Sing(X)$. Similarly, 
$X$ has only $k$-rational singularities if, and only if, $n_\alpha(f,x) = 0$ for all $\alpha \leq k+1$, and all $x\in \Sing(X)$. (Note that in the case of isolated hypersurface singularities, the two notions of spectrum used in this paper, ${\rm Sp}$ and ${\rm Sp'}$, coincide (up to sign) by the self-duality of the mixed Hodge structure on the Milnor cohomology.)
\end{remark}

Let us note that, if for an integer $p \geq 0$ we denote by  
\[
[M_{y\ast }(X)]_{p}  \in H_\ast(\Sing(X))
\]
the coefficient of $y^p$ in $M_{y\ast}(X)$, then we have the identity
\begin{equation}\label{MHsp} [M_{(-y)\ast}(X)]_{p}=\bigoplus_{\alpha \in \bQ\cap[0,1)} M^{sp}_{t\ast}(X)|_{t^{p+\alpha}},\end{equation}
In particular, as a consequence of Theorem \ref{dbrsp}, one has the following interpretation of $X$ being $k$-Du Bois without involving the monodromy action (generalizing the case $k=0$ considered in \cite[Theorem 5]{MSS}, 
which in turn extended a result of Ishii \cite{I} proved in the case of isolated singularities).
\begin{corollary}\label{thm: main 1}
With the above notations, if $X$ has $k$-Du Bois singularities, then
\[[M_{y\ast}(X)]_{p}=0 \in H_{\ast}(\Sing(X)) \textrm{ for all \ $p \leq k$}.\]
If $\Sing(X)$ is projective, then the converse is true. 
In particular, if $X$ has only isolated singularities, then $X$ has $k$-Du Bois singularities if, and only if, $\gr^p_F H^n(F_x;\bC)=0$ for all $p \leq k$ and all $x \in \Sing(X)$, where $n=\dim X$ and $F$ is the Hodge filtration on the cohomology of the Milnor fiber $F_x$ of $f$ at $x \in \Sing(X)$.
\end{corollary}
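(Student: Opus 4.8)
The plan is to deduce the entire corollary from the hypersurface spectral result Theorem~\ref{dbrsp} together with the collapsing identity \eqref{MHsp}. First I would record the elementary sign relation: since $M_{(-y)\ast}(X)=\sum_p[M_{y\ast}(X)]_p(-y)^p$, the coefficients $[M_{(-y)\ast}(X)]_p$ and $[M_{y\ast}(X)]_p$ differ by the nonzero factor $(-1)^p$, so their vanishings are equivalent. By \eqref{MHsp} we then have $[M_{y\ast}(X)]_p=0$ if and only if $\sum_{\alpha\in\bQ\cap[0,1)}M^{sp}_{t\ast}(X)|_{t^{p+\alpha}}=0$. The whole statement thus becomes a comparison between these ``fiberwise sums'' and the individual spectral coefficients $M^{sp}_{t\ast}(X)|_{t^\beta}$, organized by the elementary observation that the exponents $\beta=p+\alpha$ with $p\le k$ an integer and $\alpha\in[0,1)$ are exactly the rationals $\beta<k+1$.

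For the forward implication the argument is immediate and needs no projectivity. If $X$ has $k$-Du Bois singularities, then by Theorem~\ref{dbrsp} we have $M^{sp}_{t\ast}(X)|_{t^\beta}=0$ for every $\beta<k+1$. Given $p\le k$ and $\alpha\in[0,1)$, we have $p+\alpha<k+1$, so every summand on the right-hand side of \eqref{MHsp} vanishes; hence $[M_{(-y)\ast}(X)]_p=0$ and therefore $[M_{y\ast}(X)]_p=0$.

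The converse is where the real work lies, and I expect it to be the main obstacle. From $[M_{y\ast}(X)]_p=0$ for all $p\le k$ one obtains only that each fiberwise sum $\sum_{\alpha\in[0,1)}M^{sp}_{t\ast}(X)|_{t^{p+\alpha}}$ vanishes, whereas to invoke the converse half of Theorem~\ref{dbrsp} I need each individual class $M^{sp}_{t\ast}(X)|_{t^\beta}$ with $\beta<k+1$ to vanish. The difficulty is the potential cancellation, within a fixed integer level $p$, among the spectral coefficients attached to distinct monodromy eigenvalues $\lambda=\exp(2\pi i\alpha)$: the passage from $M^{sp}_{t\ast}(X)$ to $M_{y\ast}(X)$ forgets the semisimple monodromy and collapses all the $t^{p+\alpha}$ onto $t^p$, and at the level of $H_\ast(\Sing(X))$ this collapse is a genuine sum, not a recoverable direct sum (the direct sum in \eqref{MHsp} is literal only before applying the Todd transformation to the eigenvalue decomposition $\gr^F_{-p}\DR(\varphi_f\cM)=\bigoplus_\lambda\gr^F_{-p}\DR\bigl((\varphi_f\cM)_\lambda\bigr)$). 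The projectivity of $\Sing(X)$ is precisely the hypothesis under which the converse of Theorem~\ref{dbrsp} holds, and I expect the same mechanism — a Hodge-theoretic semi-positivity, equivalently a no-cancellation property, for the classes $M^{sp}_{t\ast}(X)|_{t^{p+\alpha}}$ at a fixed level $p\le k$ — to upgrade the vanishing of each fiberwise sum to the vanishing of each term. Once $M^{sp}_{t\ast}(X)|_{t^\beta}=0$ is extracted for all $\beta<k+1$, the converse part of Theorem~\ref{dbrsp} yields $k$-Du Bois singularities.

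Finally, for the isolated case the obstacle disappears and I would argue directly. A finite singular locus is automatically projective, so the equivalence above applies; and by Example~\ref{ex311} (via Example~\ref{ex37}) the class $M_{y\ast}(X)$ is the sum over $x\in\Sing(X)$ of $\chi_y([\widetilde H^\ast(F_x;\bQ)])$. Since the reduced Milnor cohomology of an isolated hypersurface singularity is concentrated in degree $n=\dim X$, one computes $[M_{y\ast}(X)]_p=(-1)^{n+p}\sum_{x\in\Sing(X)}\dim\gr^p_F H^n(F_x;\bC)$. As these dimensions are non-negative, the fiberwise sum vanishes precisely when every term does, so $[M_{y\ast}(X)]_p=0$ for all $p\le k$ is equivalent to $\gr^p_F H^n(F_x;\bC)=0$ for all $p\le k$ and all $x\in\Sing(X)$, which is the asserted characterization of $k$-Du Bois singularities. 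Here the positivity of Hodge numbers replaces, in transparent form, the more delicate semi-positivity input needed in the general projective case.
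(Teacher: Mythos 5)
Your reduction of the corollary to Theorem~\ref{dbrsp} via the identity \eqref{MHsp} is exactly the route the paper intends (the statement appears there without a written proof, as a consequence of that theorem), and both your forward implication and your isolated-singularities computation are correct and complete. The place where your write-up stops short of a proof is the converse: you correctly isolate the difficulty --- the hypothesis only gives vanishing of the eigenvalue-summed classes $\sum_{\alpha\in[0,1)}M^{sp}_{t\ast}(X)|_{t^{p+\alpha}}$ for each integer $p\le k$, while the converse of Theorem~\ref{dbrsp} needs each individual coefficient $M^{sp}_{t\ast}(X)|_{t^{\beta}}$ to vanish --- but you then only assert that you ``expect'' a semi-positivity mechanism to bridge this. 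That bridge is the entire content of the converse and cannot be left as an expectation; moreover, as stated (``no cancellation among the spectral coefficients at a fixed level $p$'') it is not something one should expect to hold in general, since at an arbitrary level the classes $\gr^F_{-p}\DR((\varphi_f\bQ^H_Y)_{\lambda})$ are alternating sums of cohomology sheaves in $K$-theory and could in principle cancel.

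The correct way to close the gap is to argue by contraposition at the \emph{lowest} Hodge level only, where no cancellation can occur for structural reasons. If $X$ is not $k$-Du Bois, then $\tilde{\alpha}_{f}<k+1$, so by Theorem~\ref{t32} there is a minimal integer $p\le k$ at which some eigenvalue piece of $\varphi_f\bQ^H_Y$ has non-zero graded de Rham complex in level $-p$. By minimality, each non-zero $\gr^F_{-p}\DR((\varphi_f\bQ^H_Y)_{\lambda})$ is a single coherent sheaf concentrated in one cohomological degree (the lowest non-zero Hodge piece of $(\varphi_f\cM)_{\lambda}$), so $\gr^F_{-p}\DR(\varphi_f\bQ^H_Y)$ is an honest non-zero coherent sheaf on $\Sing(X)$, namely the direct sum of these pieces over $\lambda$. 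Since $\Sing(X)$ is projective, a non-zero coherent sheaf has non-zero class in $K_0(\Sing(X))_{\bQ}$ by \cite[Proposition 1]{MSY23}, and $\td_\ast\colon K_0(\Sing(X))_{\bQ}\to \bigoplus_i CH_i(\Sing(X))_{\bQ}$ is an isomorphism by \cite[Corollary 18.3.2]{Fu}; hence $[M_{(-y)\ast}(X)]_{p}=\td_\ast\bigl(\bigl[\gr^F_{-p}\DR(\varphi_f\bQ^H_Y)\bigr]\bigr)\neq 0$, contradicting the hypothesis. So the ``no-cancellation'' you need is not a positivity property of the individual spectral coefficients but the combination of (i) the collapse of the graded de Rham complex to a single sheaf at the first non-vanishing level and (ii) the injectivity of $\td_\ast$ on $K_0\otimes\bQ$ of a projective variety; this is the same mechanism the paper runs in detail in the converse direction of Theorem~\ref{thm-mainHighCoefficients}. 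With this paragraph supplied, your proof is complete and agrees with the paper's.
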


Similarly, using the decomposition $\varphi_f=\varphi_{f,1}\oplus \varphi_{f,\neq 1}$, with $\varphi_f\colonequals \varphi_f^H[1]$ as before, we also define the {\it unipotent Hirzebruch--Milnor class of $X$} by
\begin{equation}\label{unih} M^{\{1\}}_{y\ast}(X):=T_{y\ast}(\varphi_{f,1}\bQ^H_Y) \in H_\ast(\Sing(X))[y],\end{equation}
and we denote by $[M^{\{1\}}_{y\ast}(X)]_{k+1}$  the coefficient of $y^{k+1}$ in \eqref{unih}. Then Theorem \ref{dbrsp} implies the following generalization of \cite[(4.2.6)]{MSS} where the case $k=0$ was considered. 
\begin{corollary}\label{thm: main 2}
With the above notations, if $X$ has $k$-rational singularities, then
\begin{center} 
$[M_{y\ast}(X)]_{p}=0 \in H_{\ast}(\Sing(X))$ for all \ $p \leq k$,  and  $[M^{\{1\}}_{y\ast}(X)]_{k+1}=0  \in H_{\ast}(\Sing(X))$.
\end{center}
If $\Sing(X)$ is projective, then the converse is true.
\end{corollary}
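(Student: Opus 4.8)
The plan is to reduce both directions to Theorem \ref{dbrsp}, the relation \eqref{MHsp}, and Corollary \ref{thm: main 1}, the only genuinely new ingredient being a comparison between the \emph{integer}-degree coefficients of the spectral class $M^{sp}_{t\ast}(X)$ and the coefficients of the unipotent class $M^{\{1\}}_{y\ast}(X)$. Concretely, I would first record the observation that
\[ M^{sp}_{t\ast}(X)\vert_{t^m} = 0 \iff [M^{\{1\}}_{y\ast}(X)]_m = 0 \quad\text{for every integer } m\ge 0. \]
To see this, note that by the definition of $\ell(\lambda)$ the semisimple monodromy $T_s$ acts as the identity precisely on the module underlying $\varphi_{f,1}\bQ^H_Y$, so there $\ell(\lambda)=0$, whereas on $\varphi_{f,\neq 1}\bQ^H_Y$ every eigenvalue $\lambda\neq 1$ has $\ell(\lambda)\in(0,1)$. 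Hence only the unipotent summand contributes integer powers of $t$ to $M^{sp}_{t\ast}(X)$, and comparing the defining formulas for $\DR_t$ and $\DR_y$ shows that the coefficient of $t^m$ equals $\td_\ast[\gr^F_{-m}\DR(\varphi_{f,1}\bQ^H_Y)]$, which is $(-1)^m[M^{\{1\}}_{y\ast}(X)]_m$; the stated equivalence follows.

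For the forward implication, suppose $X$ has $k$-rational singularities. Theorem \ref{dbrsp} then gives $M^{sp}_{t\ast}(X)\vert_{t^\alpha}=0$ for all $\alpha\le k+1$. For each $p\le k$, the relation \eqref{MHsp} expresses $[M_{(-y)\ast}(X)]_p$ as the sum of the $M^{sp}_{t\ast}(X)\vert_{t^{p+\alpha}}$ with $\alpha\in\bQ\cap[0,1)$; since every index satisfies $p+\alpha<p+1\le k+1$, each summand vanishes and therefore $[M_{y\ast}(X)]_p=0$. Taking the integer value $\alpha=k+1\le k+1$ gives $M^{sp}_{t\ast}(X)\vert_{t^{k+1}}=0$, and by the observation above this is exactly $[M^{\{1\}}_{y\ast}(X)]_{k+1}=0$.

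For the converse, assume $\Sing(X)$ is projective and that both vanishing statements hold. From $[M_{y\ast}(X)]_p=0$ for all $p\le k$, the converse part of Corollary \ref{thm: main 1} shows that $X$ has $k$-Du Bois singularities, whence Theorem \ref{dbrsp} yields $M^{sp}_{t\ast}(X)\vert_{t^\alpha}=0$ for all $\alpha<k+1$. The additional hypothesis $[M^{\{1\}}_{y\ast}(X)]_{k+1}=0$, via the observation, upgrades this to $M^{sp}_{t\ast}(X)\vert_{t^{k+1}}=0$, so that $M^{sp}_{t\ast}(X)\vert_{t^\alpha}=0$ for all $\alpha\le k+1$; the projective converse in Theorem \ref{dbrsp} then gives that $X$ has $k$-rational singularities.

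The delicate point, and the reason the unipotent class must be invoked, is the isolation of the single integer-degree spectral coefficient $t^{k+1}$: the relation \eqref{MHsp} bundles an entire half-open unit interval of spectral degrees into each $[M_{y\ast}(X)]_p$, so it cannot by itself separate out $M^{sp}_{t\ast}(X)\vert_{t^{k+1}}$ from the non-unipotent contributions in degrees $(k,k+1)$ (a priori these could cancel in homology). The observation identifying the integer part of $M^{sp}_{t\ast}(X)$ with $M^{\{1\}}_{y\ast}(X)$ is precisely what circumvents this, and is the main step of the argument; everything else is a bookkeeping application of Theorem \ref{dbrsp} and Corollary \ref{thm: main 1}.
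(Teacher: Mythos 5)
Your proposal is correct and is essentially the argument the paper intends: the corollary is stated there without a written proof as a direct consequence of Theorem \ref{dbrsp} via the relation \eqref{MHsp}, together with the (unipotent analogue of that relation) identification $M^{sp}_{t\ast}(X)\vert_{t^{m}}=(-1)^m[M^{\{1\}}_{y\ast}(X)]_{m}$ for integer $m$, which is exactly the observation you isolate. Your handling of the converse via the converse of Corollary \ref{thm: main 1} and the integer-degree coefficient $t^{k+1}$ matches the intended reduction.
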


For formulations of such results in terms of the normalized (spectral) Hirzebruch-Milnor classes, see \cite{MSY23}.


\section{Higher singularities of local complete intersections}

In this section we use of the deformation to the normal cone and the Verdier-Saito specialization to define (spectral) Hirzebruch-Milnor classes for an arbitrary lci subvariety $X$ of a smooth complex algebraic variety $Y$. These classes are global versions of the (dual) Hodge spectrum introduced in \cite{DMS}.
We study these classes in relation to the minimal exponent, and relate them to higher notions of rational and Du Bois singularities of lci varieties.

\subsection{Minimal exponent}\label{minlct}
Recall that for an arbitrary closed subscheme $X$ of the smooth variety $Y$, one can define a Bernstein--Sato polynomial $b_X(s)\in \bQ[s]$, extending the classical notion from the case of hypersurfaces (see \cite{BMS}). 

If $X \subseteq Y$ is a (nonempty) local complete intersection of pure codimension $r$, 
then $(s+r)$ divides $b_X(s)$,  see \cite[Proposition 6.1]{CDMO}. 
By analogy with the definition of the minimal exponent in the case of hypersurfaces, one can then define $\tilde{\alpha}(X)$ to be the negative of the largest root of $b_X(s)/(s+r)$ (with the convention that this is infinite if $b_X(s)=(s+r)$). As shown in \cite{D23}, $\tilde{\alpha}(X)$ coincides indeed with the minimal exponent of the local complete intersection $X$, as introduced in \cite{CDMO}. By definition, if $Y = \bigcup_{i\in I} U_i$ is an open cover, then
\begin{equation}\label{eqMinExpCover} \widetilde{\alpha}(X) = \min_{i\in I} \widetilde{\alpha}(X\cap U_i).\end{equation}

\begin{remark} \label{rmk-LCT} In the local complete intersection setting, we see that $\widetilde{\alpha}(X)$ is a refinement of the log canonical threshold of $(Y,X)$, just like in the hypersurface setting. Recall that the log canonical threshold is defined as follows: if $\mathfrak a\subseteq \cO_Y$ is the coherent ideal sheaf defining $X$ in $Y$, then
\[ {\rm lct}(Y,X) = {\rm lct}(\mathfrak a) = \min\{\lambda > 0 \mid \cJ(\mathfrak a^{\lambda}) \neq \cO_X\},\]
where $\cJ(\mathfrak a^{\lambda})$ is the multiplier ideal of $X$ inside $Y$ with exponent $\lambda$, defined through numerical data on a log resolution of $(Y,X)$.

Then ${\rm lct}(Y,X) = \min\{r,\widetilde{\alpha}(X)\}$, similarly to the hypersurface case. Below, we suppress $Y$ from the notation and write ${\rm lct}(X)$. In several statements below, the assumption ${\rm lct}(X) > r-1$ appears, and by this equality, it could equivalently be written $\widetilde{\alpha}(X) > r-1$, though we prefer to state it in terms of the log canonical threshold as that invariant is more well-known. By Theorem \ref{tLCIHighSings} below, if $X$ has Du Bois singularities then this condition is automatically satisfied.
\end{remark}

One then has the following generalization of Theorem \ref{thm: k du Bois singularity function via minimal exponent}.
\begin{thm}\label{tLCIHighSings} \cite{CDMO,CDM}
If $Y$ is a smooth, irreducible variety and $X$ is a local complete intersection closed subscheme of $Y$, of pure codimension $r$, then:
\begin{itemize}
\item $X$ has $k$-Du Bois singularities $\iff$ $\tilde{\alpha}(X) \geq k+r$
\item $X$ has $k$-rational singularities $\iff$ $\tilde{\alpha}(X) > k+r$.
\end{itemize}
\end{thm}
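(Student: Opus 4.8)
The plan is to recast both sides of each equivalence as a vanishing statement for the low pieces of a Hodge filtration, and then to compare them, exactly as the hypersurface case of Theorem \ref{thm: k du Bois singularity function via minimal exponent} is encoded by the $V$-filtration of $\cB_f$ in Theorem \ref{t32}. Since the higher Du Bois and higher rational conditions are local on $Y$, and the minimal exponent is local by \eqref{eqMinExpCover}, I would first reduce to an affine open on which $X$ is cut out by a regular sequence $f_1, \dots, f_r \in \cO_Y(Y)$.

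First I would produce the lci analog of $\cB_f$. Because $X$ is a local complete intersection of pure codimension $r$ it is Cohen--Macaulay, so the local cohomology $\mathcal{H}^j_X(\omega_Y)$ vanishes for $j \neq r$; the surviving regular holonomic module $\mathcal{H}^r_X(\omega_Y)$ underlies a mixed Hodge module and carries a Hodge filtration $F_\bullet$. The geometric input, supplied by the theory of Hodge filtrations on local cohomology, is that the graded pieces of $\DR(\mathcal{H}^r_X(\omega_Y))$ govern the graded Du Bois complexes $\uuline{\Omega}^p_X$, so that bijectivity of $\Omega^p_X \to \uuline{\Omega}^p_X$ for $0 \le p \le k$ --- the definition of $k$-Du Bois --- is equivalent to $F_\bullet$ being generated at a prescribed level. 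The sharper log-resolution condition defining $k$-rational singularities becomes the same statement advanced by one step, parallel to the extra vanishing $F_{k+1-\dim Y}\gr^0_V\cB_f = 0$ appearing in Theorem \ref{t32}, which is what distinguishes rational from Du Bois in the hypersurface setting.

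Second I would compute $F_\bullet$ on $\mathcal{H}^r_X(\omega_Y)$ through the $V$-filtration of the $r$ functions $f_1, \dots, f_r$ and read off the relevant vanishing from the Bernstein--Sato polynomial $b_X(s)$. Since $(s+r) \mid b_X(s)$ and $\widetilde{\alpha}(X)$ is the negative of the largest root of $b_X(s)/(s+r)$, the inequality $\widetilde{\alpha}(X) \ge k+r$ should translate precisely into the vanishing of the low Hodge pieces identified in the first step, giving the $k$-Du Bois equivalence, while the strict inequality $\widetilde{\alpha}(X) > k+r$ supplies the one additional vanishing and yields the $k$-rational equivalence. Specializing to $r = 1$ recovers Theorem \ref{thm: k du Bois singularity function via minimal exponent} and the single-variable computation \eqref{eqn: associated graded of Hodge filtrations on Bf}, which is a useful consistency check.

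The step I expect to be the main obstacle is the second one: converting the location of the largest root of $b_X(s)/(s+r)$ into the precise generation level of $F_\bullet \mathcal{H}^r_X(\omega_Y)$. In codimension $r > 1$ one must work with a $V$-filtration in several variables rather than the single-variable graph construction underlying $\cB_f$, so the clean formula \eqref{eqn: associated graded of Hodge filtrations on Bf} is replaced by a Koszul-type resolution, and the bookkeeping identifying that root with the first nonzero graded Hodge piece becomes considerably more delicate. This is exactly the technical core provided by \cite{CDMO,CDM}, whose results I would invoke here rather than reprove.
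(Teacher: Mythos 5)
This theorem carries no proof in the paper: it is quoted directly from \cite{CDMO,CDM} (see also \cite{MP22} for the $k$-Du Bois direction), so there is no internal argument to compare yours against. That said, your sketch is a faithful outline of how the cited proof actually runs: reduce to the complete intersection case, recast $k$-Du Bois and $k$-rational as filtration conditions on the local cohomology module $\mathcal{H}^r_X(\omega_Y)$ (using that $X$ is Cohen--Macaulay of pure codimension $r$ and the duality between $\mathrm{Gr}^F\mathrm{DR}$ of $\bQ^H_X$ and the complexes $\uuline{\Omega}^p_X$), and then translate those conditions into the location of the largest root of $b_X(s)/(s+r)$ via the several-variable $V$-filtration on $\cB_f$, which is exactly the content of Theorem \ref{tLCI}. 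Two cautions. First, you should apportion the external inputs correctly: the comparison between the Du Bois complexes and the Hodge filtration on local cohomology is due to Musta\c{t}\u{a}--Popa \cite{MP22}, the $V$-filtration description of $\widetilde{\alpha}(X)$ is \cite{CDMO}, and the $k$-rational equivalence is \cite{CDM}; invoking ``the technical core of \cite{CDMO,CDM}'' wholesale for both steps is borderline circular, since the conjunction of those results \emph{is} the theorem. Second, the precise filtration condition characterizing $k$-Du Bois in \cite{MP22} is the coincidence of the Hodge filtration with the order (Ext) filtration on $\mathcal{H}^r_X(\cO_Y)$ in degrees $\leq k$, not quite the ``generated at a prescribed level'' condition you state; the generation-level refinement is what enters in the passage from $k$-Du Bois to $k$-rational. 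These are bookkeeping corrections rather than gaps in the strategy.
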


\subsection{Relation to the $V$-filtration} 
As above, let $X\subseteq Y$ be a local complete intersection subvariety of pure codimension $r$. Fix an open cover $Y = \bigcup_{j\in J} U_j$ such that, for any $j\in J$ with $X\cap U_j \neq \emptyset$ (which, below we will assume is true for all $j\in J$, for ease of notation), we have $X\cap U_j \subseteq U_j$ is a complete intersection subvariety, meaning its ideal sheaf is defined by a regular sequence in $\mathcal O_{U_j}(U_j)$.

For some $j\in J$, let $X\cap U_j \subseteq U_j$ be defined by a regular sequence $f_1,\dots, f_r \in \cO_{U_j}(U_j)$. Again, for the filtered $\sD_{U_j}$-module $(\omega_{U_j},F)$ underlying the constant mixed Hodge module $\bQ^H_{U_j}[\dim U_j]$, we let 
\[ \cB_{f,U_j} \colonequals (i_f)_{+}\omega_{U_j} \cong \omega_{U_j}\otimes_{\bC}\bC[\d_{t_1},\dots, \d_{t_r}]\delta_f ,\]
with Hodge filtration defined by
\[ F_{k-\dim Y} \cB_{f,U_j} \colonequals \bigoplus_{0\leq |\alpha| \leq k} \omega_{U_j} \d_t^{\alpha}\delta_f.\]

This module admits a $V$-filtration along $(t_1,\dots, t_r)$ as studied in \cite{BMS}. This is an exhaustive decreasing, discretely $\bQ$-indexed filtration $V^\bullet \cB_{f,U_j}$ such that if $\theta_t = \sum_{i=1}^r t_i \d_{t_i}$, then $\theta_t+\lambda$ is nilpotent on ${\rm Gr}_V^\lambda  \cB_{f,U_j}$. An important difference between the higher codimension case and the case of a hypersurface is that, in general, ${\rm Gr}_V^\lambda \cB_{f,U_j}$ need not be a holonomic (or even coherent) $\sD_{U_j}$-module. Nonetheless, we have the following analogue of Theorem \ref{t32} (which, in one sense, is the definition of the minimal exponent for complete intersection subvarieties).

\begin{thm} \label{tLCI} \cite{CDMO} Let $X \subseteq Y$ be a local complete intersection subvariety of pure codimension $r$ and $Y = \bigcup_{j\in J} U_j$ be an open cover as above. For any $\mu \leq r$, we have
\[ \widetilde{\alpha}(X) \geq \mu \iff \text{ for all } j\in J, \text{ we have } F_{-\dim Y} \gr_V^{\chi}\cB_{f,U_j} = 0 \text{ for all } \chi < \mu.\]

Moreover, for any $\lambda \in (0,1]$ and $k\in \bZ_{\geq -1}$, we have the equivalence
\[ \widetilde{\alpha}(X) \geq r+k + \lambda \iff \text{ for all } j\in J,\text{ we have } F_{k+1-\dim Y} \gr_V^{r-1+\chi}\cB_{f,U_j} = 0 \text{ for all } \chi < \lambda.\]
\end{thm}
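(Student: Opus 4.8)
The plan is to fix one member of the cover, reduce each equivalence to a membership/root condition, and then match it with the definition of $\widetilde{\alpha}$. Since the right-hand side of each equivalence is a conjunction over $j\in J$, and $\widetilde{\alpha}(X)=\min_{j}\widetilde{\alpha}(X\cap U_j)$ by \eqref{eqMinExpCover}, it suffices to prove, for a single $j\in J$ with $X\cap U_j$ cut out by a regular sequence $f=(f_1,\dots,f_r)$, the corresponding equivalence for the complete intersection $X\cap U_j$. Write $n=\dim Y$, $\cB_f=\cB_{f,U_j}$, and recall $F_{k-n}\cB_f=\bigoplus_{0\le |\alpha|\le k}\omega_{U_j}\d_t^{\alpha}\delta_f$.

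First I would settle the base equivalence, where only $\mu\le r$ occurs. Because $F_{-n}\cB_f=\omega_{U_j}\delta_f$ is the cyclic $\cO_{U_j}$-module generated by $\delta_f$, its image in $\gr_V^{\chi}\cB_f$ vanishes for every $\chi<\mu$ if and only if $\delta_f\in V^{\mu}\cB_f$ (here I use that $V^{\bullet}$ is discrete and stable under multiplication by functions on $U_j$). By the $V$-filtration description of the log canonical threshold \cite{BMS}, $\delta_f\in V^{\mu}\cB_f\iff {\rm lct}(X\cap U_j)\ge \mu$; and since $\mu\le r$, the identity ${\rm lct}=\min\{r,\widetilde{\alpha}\}$ recalled in Remark \ref{rmk-LCT} turns this into $\widetilde{\alpha}(X\cap U_j)\ge \mu$. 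This is the first equivalence.

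For the refined equivalence the thresholds $r+k+\lambda$ exceed $r$, so the condition lies beyond the reach of the plain $\delta_f$-part of the $V$-filtration, and the role of the microlocal refinement is played by the higher Hodge pieces $F_{k+1-n}\cB_f$, i.e. by the $t$-derivatives $\d_t^{\alpha}\delta_f$ with $|\alpha|\le k+1$. Writing the index as $r-1+\chi$ with $\chi<\lambda$, the target is to show that the vanishing of $F_{k+1-n}\gr_V^{r-1+\chi}\cB_f$ for all $\chi<\lambda$ is equivalent to the reduced Bernstein--Sato polynomial $b_X(s)/(s+r)$ having no root in $(-(r+k+\lambda),0)$, which is exactly $\widetilde{\alpha}(X\cap U_j)\ge r+k+\lambda$. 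The mechanism is the interaction, governed by the $b$-function of $f$, between raising the $V$-index by $t_i$, lowering it by $\d_{t_i}$, and counting $t$-derivatives via $F$: the relations produced by $b_X$ are precisely what allow the classes $[\d_t^{\alpha}\delta_f]$ to die in the relevant graded pieces, and the order to which they survive records the largest root of the reduced $b$-function. Specializing to $k=-1$ recovers the first equivalence at $\mu=r-1+\lambda$, the internal consistency check. An alternative route, in the spirit of the Remark following Theorem \ref{thm-mainHighCoefficientsi}, is to pass to the hypersurface $g=\sum_{i=1}^r y_i f_i$, for which $\widetilde{\alpha}(X)=\widetilde{\alpha}(g|_U)$ (see \cite{CDMO,D23}), and to invoke the hypersurface statement Theorem \ref{t32}.

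The hard part, and the reason this is strictly more involved than its hypersurface counterpart, is that $\gr_V^{\lambda}\cB_f$ need not be holonomic or even $\cO$-coherent, so one cannot appeal to the structure theory of regular holonomic modules, nor to the clean nilpotent decomposition of $\gr_V$, that underlies the proof for hypersurfaces. I would handle this by working throughout with the multi-variable (microlocal) $V$-filtration of \cite{CDMO}, whose graded pieces in the needed range remain controlled by the $b$-function despite the failure of coherence; the bookkeeping in the alternative hypersurface route is comparably delicate, since the identification of the two $V$-filtrations passes through a Fourier--Laplace transform, as noted in the Remark following Theorem \ref{thm-mainHighCoefficientsi}.
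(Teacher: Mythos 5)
This statement is quoted from \cite{CDMO}; the paper supplies no proof of it, so there is no internal argument to compare your attempt against. Your reduction to a single chart via \eqref{eqMinExpCover} and your treatment of the first equivalence are sound: since $F_{-\dim Y}\cB_f=\omega_{U_j}\delta_f$ is generated by $\delta_f$ over $\cO_{U_j}$ and $V^{\bullet}$ is discrete, exhaustive and $\cO$-stable, the vanishing of $F_{-\dim Y}\gr_V^{\chi}\cB_f$ for all $\chi<\mu$ is equivalent to $\delta_f\in V^{\mu}\cB_f$; the theorem of Budur--Musta\c{t}\u{a}--Saito identifying $V^{\bullet}\cB_f\cap\cO\,\delta_f$ with multiplier ideals converts this into ${\rm lct}(X\cap U_j)\ge\mu$, and since $-r$ is always a root of $b_X$ one has ${\rm lct}=\min\{r,\widetilde{\alpha}\}$, which for $\mu\le r$ gives $\widetilde{\alpha}(X\cap U_j)\ge\mu$.

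The gap is in the second equivalence, which is where all the content of the theorem lives. The paragraph beginning ``The mechanism is the interaction\dots'' describes what a proof must accomplish but does not supply one: the assertion that the relations produced by $b_X(s)$ force the classes $[\d_t^{\alpha}\delta_f]$ to vanish in $F_{k+1-\dim Y}\gr_V^{r-1+\chi}\cB_f$ exactly up to the order recorded by the largest root of $b_X(s)/(s+r)$ \emph{is} the theorem, and in the multivariable setting it does not follow from formal manipulation of the functional equation. The argument in \cite{CDMO} requires, among other things, strictness/compatibility statements between $F_{\bullet}$ and $V^{\bullet}$ (to pass between $F_pV^{\chi}$ and $F_p\gr_V^{\chi}$ and to control the action of $t_i$ and $\d_{t_i}$ on Hodge pieces), together with the comparison to the associated hypersurface $g=\sum_i y_if_i$ on $Y\times(\bA^r\setminus\{0\})$, which is mediated by a partial Fourier--Laplace transform; only after that can the hypersurface statement (Theorem \ref{t32}) be invoked. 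You name both of these ingredients but execute neither, and you yourself flag the Fourier--Laplace difficulty without resolving it. A small additional inaccuracy: setting $k=-1$ in the second equivalence only controls $F_{-\dim Y}\gr_V^{r-1+\chi}$ for $\chi<\lambda$, i.e.\ indices in $[r-1,r-1+\lambda)$, whereas the first equivalence at $\mu=r-1+\lambda$ demands vanishing for all indices below $r-1+\lambda$, so the two are consistent but the former does not literally ``recover'' the latter. As it stands, the proposal is a correct argument for the first equivalence and only a roadmap for the second.
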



\subsection{Deformation to the normal cone, specialization}\label{defc}
Let us now assume that $\ell: X \hookrightarrow Y$ is the embedding of a 
local complete intersection variety $X$ of codimension $r$ 
in a smooth complex algebraic variety $Y$. 
Then $\ell$ gives rise to an embedding $$X \times \{0\} \hookrightarrow Y \times \{0\} \hookrightarrow Y \times \bC.$$
Consider the blowup ${\rm Bl}_{X\times \{0\}} (Y \times \bC)$ of $Y\times \bC$ along $X \times \{0\} $ (with ${\rm Bl}$ denoting the blowup), with exceptional divisor $\mathbb{P}(C \oplus \bf{1})$, for $C=C_XY$ 
the normal cone of $X$ in $Y$. (Note that $C$ is in fact an algebraic vector bundle of rank $r$.)
The second factor projection $Y\times \bC \to \bC$ induces a flat morphism 
$${\rm Bl}_{X\times \{0\}} (Y \times \bC) \to \bC$$
whose zero fiber is given by $${\rm Bl}_{X}Y \cup_{\mathbb{P}(C)} \mathbb{P}(C \oplus \bf{1}).$$
Let
\[
\widetilde{Y}={\rm Bl}_{X\times \{0\}} (Y \times \bC) \setminus {\rm Bl}_{X \times \{0\}} (Y\times \{0\}),
\]
and consider the induced morphism $h:\widetilde{Y} \to \bC$, whose zero fiber is $h^{-1}(0)=C$ and general fiber $h^{-1}(t)=Y \times \{ t \}$, $t \in \bC^*$.
Let us set $Y\times \bC^*=:\widetilde{Y}^*$. Then we have the {\it deformation to the normal cone}, explained by the following diagram:
\begin{equation}\label{diag}
\xymatrix{
C_XY \ar[d] \ar@{^(->}[r]^i & \widetilde{Y} \ar[d]_h & \ar@{^(->}[l]_j \ar[d] \widetilde{Y}^*  \\
\{ 0 \} \ar@{^(->}[r] & \bC & \ar@{^(->}[l] \bC^* 
}
\end{equation}
To the above diagram, one can associate nearby and vanishing cycle functors $\psi_h$, resp., $\varphi_h$. The nearby cycle functor $\psi_h$ is used to define the {\it Verdier specialization} functor along $X$, see \cite{V, Saito90}. However, for our purpose, the vanishing cycles of $h$ and the associated Hirzebruch-Milnor class of $C_XY$ will turn out to be related to the singularities of $X$.

With $j\colon \widetilde{Y}^*=Y\times \bC^* \to \widetilde{Y}$ the open (affine) inclusion, denote by $$q\colon \widetilde{Y}^* \to Y$$ the projection map. Then for $M \in \MHM(Y)$, the {\it Verdier specialization} of $M$ is defined as
\[
{\rm Sp}_X(M):=\psi_h(j_*q^*M) \in \MHM(C_XY),
\]
with $\psi_h=\psi_h^H[1]$ corresponding to Deligne's nearby cycle functor as in \eqref{conv}, see \cite[Section 2.30]{Saito90}. Recall that the functor ${\rm Sp}_X$ commutes with duality $\bD$, and it induces the identity on $\MHM(X)$, meaning that for $$s=s_0:X \hookrightarrow  C_XY$$ the inclusion of the zero section of $C_XY$ and $M\in D^b\MHM(Y)$, there is an isomorphism
\begin{equation}\label{zero} s^*{\rm Sp}_X(M) \to \ell^* M \end{equation} in $D^b\MHM(X)$. Here $\ell\colon X \hookrightarrow Y$ is the inclusion map, and we consider the induced functor ${\rm Sp}_X$ on derived categories.

Let  $$\rho:\widetilde{Y}\to Y\times \bC \to Y$$
be the composition of the (restriction of the) blowup with the projection map. Then $\rho \circ j=q$, whence $$j^*(j_*q^*M)\simeq q^*M\simeq j^*(\rho^*M).$$
Since for $N \in \MHM(\widetilde{Y})$, the nearby cycle $\psi^H_h(N)$ only depends on $j^*N$, it follows that
\[
{\rm Sp}_X(M)\simeq \psi_h(\rho^*M).
\]

Next consider the distinguished triangle
$$i^* \overset{sp}{\longrightarrow} \psi_h \overset{can}{\longrightarrow} \varphi_h \overset{[1]}{\longrightarrow},$$
with $i\colon C_XY \hookrightarrow \widetilde{Y}$ as in \eqref{diag}, and apply it to $\rho^*M$ to get a triangle
\begin{equation}\label{f1}
i^*\rho^*M \overset{sp}{\longrightarrow} {\rm Sp}_X(M)  \overset{can}{\longrightarrow} \varphi_h(\rho^*M)  \overset{[1]}{\longrightarrow}
\end{equation}

Finally, for $M=\bQ^H_Y[\dim(Y)]$, \eqref{f1} becomes the following triangle in $D^b\MHM(C_XY)$:
\begin{equation}\label{f2}
\bQ^H_{C_XY}[\dim(X)+r] \overset{sp}{\longrightarrow} {\rm Sp}_X(\bQ^H_Y[\dim(Y)])  \overset{can}{\longrightarrow}\varphi_h(\bQ^H_{\widetilde{Y}}[\dim(\widetilde{Y})-1])  \overset{[1]}{\longrightarrow}
\end{equation}

\begin{remark}
Let us next note that since $X$, and hence $C_XY$, is a local complete intersection,  the complex $\bQ^H_{C_XY}[\dim(X)+r]$ is a single mixed Hodge module on $C_XY$. Similarly, since $X$ is a local complete intersection, $\widetilde{Y}$ is a local complete intersection as well, and so $\bQ_{\widetilde{Y}}^H[\dim(\widetilde{Y})]$ is a mixed Hodge module. Since the functor $\varphi^H_h=\varphi_h[-1]$ preserves mixed Hodge modules, it follows that $\varphi_h(\bQ^H_{\widetilde{Y}}[\dim(\widetilde{Y})-1])$ is a single mixed Hodge module as well. 
Hence the three objects in the above triangle \eqref{f2} are mixed Hodge modules on $C_XY$. By taking the long exact sequence of cohomology objects associated to  \eqref{f2},  we obtain the following short exact sequence in $\MHM(C_XY)$:
\begin{equation}\label{f3}
0\longrightarrow \bQ^H_{C_XY}[\dim(X)+r] \overset{sp}{\longrightarrow} {\rm Sp}_X(\bQ^H_Y[\dim(Y)]) \overset{can}{\longrightarrow} \varphi_h(\bQ^H_{\widetilde{Y}}[\dim(\widetilde{Y})-1])  \longrightarrow 0.
\end{equation}
\end{remark}

\begin{remark}\label{rmkVerdier}
It is important to highlight a difference between the definition of the monodromy on ${\rm Sp}_X(\bQ^H_Y[\dim(Y)])$ and the one introduced in \cite{V}, as Verdier's results are referenced in later sections. Throughout this paper, we consider only the monodromy induced by the vanishing cycle $\psi_h$ of $h$ (as described in \cite[p.25]{Sc-book}), which is the inverse of the one defined via the isomorphism $\iota_{2\pi i}$ in \cite[\textsection 9]{V}. 
\end{remark}

\begin{remark}\label{supp}
We note here that $\varphi_h(\bQ^H_{\widetilde{Y}}[\dim(\widetilde{Y})-1])$ is supported on the restriction of the normal bundle $C_XY$ to $\Sing(X)$, e.g., on $\Sing(X) \times \bC^r$ if $X$ is a global complete intersection (since in this case $C_XY$ is a trivial rank $r$ vector bundle on $X$). 
For this, we note that if $X$ (and hence $C_XY$) is smooth, then $\widetilde{Y}$ is smooth and the shifted complex $\varphi_h \bQ^H_{\widetilde{Y}}$ is supported on $\Sing(h^{-1}(0))=\emptyset$, i.e., it is the zero complex (in fact, $h$ is  in this case a smooth morphism, as one can check in local coordinates). So when $X$ is smooth, the injective map $sp$ in \eqref{f3} is an isomorphism. Then the assertion follows from the localization property (SP0) of the specialization functor (cf. \cite{V}), which induces a similar property for $\varphi_h(\bQ^H_{\widetilde{Y}}[\dim(\widetilde{Y})-1])$.
\end{remark}

\subsection{Minimal exponent, redux}
In this section, we prove the following result, which is a consequence of Theorem \ref{tLCI} above.

\begin{thm} \label{tLCIQ} Let $X\subseteq Y$ be a local complete intersection subvariety of pure codimension $r$. Assume $X$ satisfies ${\rm lct}(X)> r-1$. We have the equality
\begin{equation}\label{eq5} \widetilde{\alpha}(X) = r -1 + \min\{p+\lambda \mid \lambda \in (0,1], {\rm Gr}^F_{p-\dim Y}{\rm DR}\varphi_{h,\lambda}(\bQ^H_{\widetilde{Y}}[\dim(\widetilde{Y})-1]) \neq 0\},\end{equation}
where $\varphi_{h,\lambda}$ is short for $\varphi_{h,\exp(-2\pi i \lambda)}$.
\end{thm}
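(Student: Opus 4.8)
The plan is to deduce \eqref{eq5} from Theorem \ref{tLCI}, which already computes $\widetilde{\alpha}(X)$ through the Hodge-filtered pieces $F_\bullet \gr_V^{\chi}\cB_{f}$, by matching those pieces against the graded de Rham pieces of $\varphi_{h,\lambda}(\bQ^H_{\widetilde{Y}}[\dim\widetilde{Y}-1])$. First I would reduce to a local computation. Both sides of \eqref{eq5} are compatible with restriction to the cover $Y=\bigcup_{j}U_j$: the left-hand side by \eqref{eqMinExpCover}, and the right-hand side because the de Rham functor and the vanishing cycle functor $\varphi_h$ are local, so a graded de Rham piece is nonzero globally precisely when it is nonzero on some chart. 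Hence I may assume $X=V(f_1,\dots,f_r)\subseteq Y$ is cut out by a regular sequence $f=(f_1,\dots,f_r)$, in which case the normal cone $C_XY$ is the trivial rank-$r$ bundle $X\times\bC^r$ and $h$ is monodromic for the fiberwise $\bC^*$-scaling.

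The crux is the dictionary
\begin{equation*}
\gr^F_{p-\dim Y}\DR\,\varphi_{h,\lambda}\bigl(\bQ^H_{\widetilde{Y}}[\dim\widetilde{Y}-1]\bigr)\neq 0 \iff F_{p-\dim Y}\gr_V^{r-1+\lambda}\cB_f\neq 0,
\end{equation*}
for $\lambda\in(0,1]$ and $p\in\bZ$, where $\chi=r-1+\lambda$ is the $V$-index. The bridge is the monodromic Fourier--Laplace transform: the specialization $\mathrm{Sp}_X(\bQ^H_Y[\dim Y])$ is, fiberwise over $X$, the Fourier transform of $\cB_f$ equipped with its $V$-filtration along $(t_1,\dots,t_r)$ (compare \cite[Proposition 3.4]{D23}), under which the $V$-index $\chi=r-1+\lambda$ is sent to the monodromy eigenvalue $\exp(-2\pi i\lambda)$ of $\varphi_{h,\lambda}$. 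The short exact sequence \eqref{f3} splits off the constant summand $\bQ^H_{C_XY}$, which corresponds under this dictionary to the top jump $\chi=r$ and contributes trivially to the minimum, so that $\varphi_h$ records exactly the pieces $\gr_V^{r-1+\lambda}\cB_f$ with $\lambda\in(0,1]$. I expect the main obstacle to be precisely here: one must track the Hodge-filtration shift produced by the Fourier--Laplace transform together with the degree shift built into $\DR$, and verify that they combine to the stated zero net shift in the index $p-\dim Y$. One must also use that, at the bottom of the filtration, nonvanishing of $\gr^F_{p-\dim Y}\DR(\cN)$ detects precisely the nonvanishing of $F_{p-\dim Y}\cN$, so that no cohomological cancellation occurs at the lowest level.

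Granting the dictionary, the formula follows from a direct manipulation of Theorem \ref{tLCI}. Writing $\widetilde{\alpha}(X)=(r-1)+\beta$, the assumption $\mathrm{lct}(X)>r-1$ forces $\beta>0$, so the relevant $V$-jumps lie in $(r-1,r]$, which is exactly the range governed by the second equivalence of Theorem \ref{tLCI}; after the substitution $\chi=r-1+\lambda$ that equivalence reads
\begin{equation*}
\beta\geq k+1+\lambda \iff F_{k+1-\dim Y}\gr_V^{r-1+\chi}\cB_f=0 \ \text{ for all } \ \chi<\lambda .
\end{equation*}
Equivalently, $\beta<k+1+\lambda$ holds iff there is some $\chi<\lambda$ with $F_{k+1-\dim Y}\gr_V^{r-1+\chi}\cB_f\neq 0$. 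Since $k+1+\lambda$ ranges over all positive reals as $k\geq -1$ and $\lambda\in(0,1]$ vary, comparing the threshold on each side shows $\beta=\min\{p+\lambda\mid \lambda\in(0,1],\ F_{p-\dim Y}\gr_V^{r-1+\lambda}\cB_f\neq 0\}$: any pair $(p,\lambda)$ with $F_{p-\dim Y}\gr_V^{r-1+\lambda}\cB_f\neq 0$ certifies $\beta\leq p+\lambda$ by letting the auxiliary parameter decrease to $\lambda$, while for $x<\min(p+\lambda)$ the requisite vanishing holds and gives $\beta\geq x$. Combining this with the dictionary identifies the right-hand minimum of \eqref{eq5} with $\beta$, and adding back $r-1$ yields $\widetilde{\alpha}(X)=(r-1)+\beta$, as claimed.
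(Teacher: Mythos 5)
Your overall strategy --- localize, identify the graded de Rham pieces of $\varphi_{h,\lambda}$ with Hodge-graded pieces of $\gr_V^\bullet\cB_f$, then run Theorem \ref{tLCI} --- is the paper's strategy, and your non-unipotent dictionary is essentially correct at the minimal Hodge index (though the stated ``iff for all $p$'' needs the monodromic propagation of \cite[Thm.~1]{CD}, i.e.\ \eqref{eq-TestZero}, together with the vanishing $F_{p-\dim Y}\gr_V^{\chi}\cB_f=0$ for $\chi<r$ coming from Theorem \ref{tLCI}, to rule out contributions from $V$-indices $\leq r-1$). The genuine gap is the unipotent eigenvalue $\lambda=1$. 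There your dictionary reads $\gr^F_{p-\dim Y}\DR\,\varphi_{h,1}\neq 0 \iff F_{p-\dim Y}\gr_V^{r}\cB_f\neq 0$, and this is false: by the short exact sequence \eqref{f3}, $\varphi_{h,1}$ is the \emph{quotient} of ${\rm Sp}_{X,1}(\bQ^H_Y[\dim Y])=\bigoplus_\ell\gr_V^\ell\cB_f$ by $\bQ^H_{C_XY}[\dim X+r]=\cK_f[z_1,\dots,z_r]$, so its degree-$r$ monodromic piece is $\gr_V^r(\cB_f)/\cK_f$, not $\gr_V^r(\cB_f)$. Since $F_{-\dim Y}\cK_f\cong F_{-\dim X}$ of (the $\cD$-module underlying) $\bQ^H_X[\dim X]$ is always nonzero, one has $F_{-\dim Y}\gr_V^r(\cB_f)\neq 0$ for every lci $X$; your right-hand side would therefore always register the pair $(p,\lambda)=(0,1)$ and cap the minimum at $1$, i.e.\ force $\widetilde{\alpha}(X)\leq r$ --- wrong for any $X$ with $k$-rational singularities, $k\ge 0$. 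You do say the constant summand ``contributes trivially to the minimum,'' but as written the dictionary does not quotient it out, and that is exactly where the content lies.

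Relatedly, your final combinatorial step breaks at $\lambda=1$: Theorem \ref{tLCI} only constrains $F_\bullet\gr_V^{r-1+\chi}\cB_f$ for $\chi<\lambda\leq 1$, hence only $V$-indices strictly below $r$, so the certificate ``nonvanishing at $(p,1)$ implies $\beta\leq p+1$'' cannot be extracted from it by ``letting the auxiliary parameter decrease to $\lambda$'' (there is no admissible parameter above $1$). The missing argument, which the paper supplies, is the translation between index $r$ and index $r-1$ via the $\partial_{t_i}$: since $F_{p+2-\dim Y}\cB_f=F_{p+1-\dim Y}\cB_f+\sum_{i}(F_{p+1-\dim Y}\cB_f)\partial_{t_i}$ and $\cK_f=\bigcap_i\ker(\partial_{t_i}\colon\gr_V^r\to\gr_V^{r-1})$, one gets that $F_{p+2-\dim Y}\cB_f\not\subseteq V^{>r-1}\cB_f$ (equivalently $F_{p+2-\dim Y}\gr_V^{r-1}\cB_f\neq 0$, which \emph{is} visible to Theorem \ref{tLCI}) precisely when $F_{p+1-\dim Y}\cK_f\subsetneq F_{p+1-\dim Y}\gr_V^r(\cB_f)$, i.e.\ precisely when $F_{p+1-\dim Y}\varphi_{h,1}(\bQ^H_{\widetilde Y}[\dim(\widetilde Y)-1])\neq 0$. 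Without this step the case where $\widetilde{\alpha}(X)-r$ is an integer (equivalently $\mu=1$) is not proved, and the formula \eqref{eq5} as you derive it would be incorrect.
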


Before proving the theorem, we provide more details on the underlying filtered $\cD$-module of $\varphi_{h,\lambda}$.
Since both sides of \eqref{eq5} can be defined by taking the minimum over an open cover $Y = \bigcup_{j\in J} U_j$, we can assume that $X \subseteq Y$ is a complete intersection subvariety, defined by $f_1,\dots, f_r \in \mathcal O_Y(Y)$. 
Let $i_f \colon Y \to Y \times \bC^r_t$ be the graph embedding along $(f_1,\dots, f_r)$, with fiber coordinates $t_1,\dots, t_r$ on the affine space. As above, denote $\cB_f = (i_f)_*(\bQ_Y^H[\dim Y])$. This has underlying $\cD$-module $\cB_f = \bigoplus_{\alpha \in \N^r} \omega_Y \partial_t^\alpha \delta_f$, where $\delta_f$ is a formal symbol as above. The Hodge filtration is given by
\[ F_{p-\dim Y} \cB_f = \bigoplus_{|\alpha| \leq p} \omega_Y \partial_t^\alpha \delta_f,\]
and this module admits a Kashiwara-Malgrange $V$-filtration along $t_1,\dots, t_r$, which we denote $V^\bullet \cB_f$. At this point, we observe that $F_{p-\dim Y}\cB_f$ is a coherent $\cO_Y$-module, and so any $\cO_Y$-subquotient is also coherent.

In the above notations, we view ${\rm Sp}_X(\bQ_Y^H[\dim(Y)])$ as a mixed Hodge module on $Y \times \bC^r_z$, and it has underlying filtered $\cD$-module given by (see \cite{BMS, CD})
\[ {\rm Sp}(\cB_f) = \bigoplus_{\chi \in \bQ} {\rm Gr}_V^{\chi}(\cB_f),\]
\[ F_p {\rm Sp}(\cB_f) = \bigoplus_{\chi \in \bQ} F_p {\rm Gr}_V^{\chi}(\cB_f), \text{ where } F_p {\rm Gr}_V^\chi(\cB_f) = \frac{F_p V^\chi \cB_f}{F_p V^{>\chi}\cB_f}.\]
Here the action of $\cD_{Y\times \bC^r} = \cD_Y\langle z_1,\dots, z_r, \partial_{z_1},\dots, \partial_{z_r}\rangle$ is defined by
\[  [m]P = [m] P \text{ for all } m\in {\rm Gr}_V^\chi(\cB_f), \ P \in \cD_Y,\]
\[ [ m]z_i = [mt_i] \in {\rm Gr}_V^{\chi+1}(\cB_f) \text{ for all } m\in {\rm Gr}_V^\chi(\cB_f),\]
\[ [m] \d_{z_i} = [m\d_{t_i}] \in {\rm Gr}_V^{\chi-1}(\cB_f) \text{ for all } m\in {\rm Gr}_V^{\chi}(\cB_f).\]

Let $\cK_f = \bigcap_{i=1}^r \ker(\partial_{t_i} \colon {\rm Gr}_V^r(\cB_f) \to {\rm Gr}_V^{r-1}(\cB_f)) \subseteq {\rm Gr}_V^r(\cB_f)$. The canonical inclusion $\bQ_{C_X Y}^H[\dim(X) + r] \xrightarrow[]{sp} {\rm Sp}_{X,1}(\bQ_Y^H[\dim(Y)])$ corresponds to the inclusion
\[ \cK_f[z_1,\dots, z_r] \hookrightarrow \bigoplus_{\ell \in \bZ} {\rm Gr}_V^\ell(\cB_f),\]
\[ [m]z^\alpha \mapsto [mt^\alpha] \in {\rm Gr}_V^{r+|\alpha|}(\cB_f).\]
This map is strictly filtered, so that the Hodge filtration on $\cK_f[z_1,\dots, z_r]$ is determined by that on ${\rm Sp}_{X,1}(\bQ_Y^H[\dim(Y)])$. The quotient module is, by definition, $\varphi_{h,1}(\bQ_{\widetilde{Y}}^H[\dim(\widetilde{Y})-1])$, and so the Hodge filtration on this vanishing cycles module is determined by the Hodge filtration on ${\rm Sp}_{X,1}(\bQ_Y^H[\dim(Y)])$. Note that, for $\lambda \in (0,1)$, we have a filtered isomorphism
\begin{equation} \label{eq-nonUnipQ} \bigoplus_{\ell \in \Z} {\rm Gr}_V^{\lambda+\ell}(\cB_f,F) \cong (\varphi_{h,\lambda}(\bQ_{\widetilde{Y}}^H[\dim(\widetilde{Y})-1]),F).
\end{equation}

Although this formula looks similar to the definition of vanishing cycles in terms of $V$-filtrations, we remark that the right hand side is not computed in terms of the $V$-filtration of the hypersurface defined by $h$ in $\widetilde{Y}$, because $\widetilde{Y}$ is singular. The resemblance to the definition of vanishing cycles in terms of $V$-filtrations can be especially confusing here, because neither side has a shift of the Hodge filtration, but we hope no confusion will arise.

We will need some results concerning monodromic mixed Hodge modules on $Y\times \bC^r$. Recall that these are modules whose underlying filtered $\cD$-module splits into generalized eigenspaces for the Euler operator $\theta = \sum_{i=1}^r z_i \partial_{z_i}$. In other words, if $\cM$ is monodromic, then $\cM = \bigoplus_{\chi \in \bQ} \cM^\chi$ where $\cM^\chi = \bigcup_{j>1} \ker((\theta + \chi)^j)$. The Hodge filtration also satisfies $F_p \cM = \bigoplus_{\chi \in \bQ} F_p \cM^\chi$, where $F_p \cM^\chi = \cM^\chi \cap F_p \cM$.
It is an easy application of \cite[Thm. 1]{CD} that, for $\chi > r$, we have equality $F_p \cM^{\chi} = \sum_{i=1}^r F_p \cM^{\chi-1}z_i $. This has the consequence that if $\cM$ is a (filtered direct summand of a) monodromic module such that $F_p \cM = \bigoplus_{\chi > r-1} F_p \cM^\chi$, then
\begin{equation}\label{eq-TestZero} F_p \cM = 0 \text{ if and only if } F_p \cM^{r-1+\lambda} = 0 \text{ for all } \lambda \in (0,1].\end{equation}

Moreover, if we take $\lambda \in (0,1]$ and $\sigma = \min\{p \in \bZ \mid F_p \cM^{\lambda+\bZ} \neq 0\}$ and if we assume $F_\sigma \cM^{\lambda + \bZ} = \bigoplus_{\ell \in \bZ_{\geq 0}} F_\sigma \cM^{r-1+\lambda+\ell}$, then the following natural map is surjective:
\begin{equation}\label{eq-surjectLowestHodge} (F_\sigma \cM^{r-1+\lambda})[z_1,\dots, z_r] \to F_\sigma \cM^{\lambda + \bZ}\end{equation}

In fact, by \cite[Lemma 2.6]{D23} that natural map is an isomorphism in this case. Thus, if $F_\sigma \cM^{\lambda+\bZ} = \bigoplus_{\ell \geq 0} F_\sigma \cM^{r-1+\lambda+\ell}$, we have an isomorphism
\begin{equation}\label{eq-isoLowestHodge}\pi^*(F_\sigma \cM^{r-1+\lambda}) = (F_\sigma \cM^{r-1+\lambda})[z_1,\dots, z_r] \cong F_\sigma \cM^{\lambda+\bZ},  \quad \pi \colon Y \times \bC^r_z \to Y,\end{equation}
where $\pi^*$ is the pullback of $\cO$-modules and $F_\sigma \cM^{r-1+\lambda}$ is a coherent $\cO_Y$-module.

If $s\colon Y \to Y \times \bC^r$ is the zero section and if $\cM$ is monodromic such that $F_\sigma \cH^r s^!(\cM) = 0$ and $F_{\sigma}\cM = \bigoplus_{\chi \geq r-1} F_\sigma \cM^\chi$, then we have an isomorphism
\[ \pi^*(F_\sigma \cM^{r-1}) = F_\sigma \cM^{r-1}[z_1,\dots, z_r] \cong F_\sigma \cM^{\bZ}.\]
Indeed, $F_\sigma \cH^r s^!(\cM) =0$ implies that $F_\sigma \cM^r = \sum_{i=1}^r F_\sigma \cM^{r-1}z_i$ by \cite[Theorem 2]{CD}, and so one can use the same argument as above.

We can now prove the theorem. 
\begin{proof}[Proof of Theorem \ref{tLCIQ}] Assume $\widetilde{\alpha}(Z) = r + \lambda + p$ for some $\lambda \in (0,1]$ and $p \in \bZ_{\geq -1}$. By Theorem \ref{tLCI}, this means
\[ F_{p+1-\dim Y} {\rm Gr}_V^{r-1+\chi} \cB_f = 0 \text{ for all } \chi < \lambda\]
and we have non-vanishing
\[ \begin{cases} F_{p+1-\dim Y} {\rm Gr}_V^{r-1+\lambda} \cB_f \neq 0 & \lambda \in (0,1) \\ F_{p+2-\dim Y} {\rm Gr}_V^{r-1} \cB_f \neq 0 & \lambda = 1 \end{cases}.\]

To prove the claim, we first show that $F_{p-\dim Y} \varphi_h(\bQ_{\widetilde{Y}}^H[\dim(\widetilde{Y})-1]) = 0$. Using Theorem \ref{tLCI} (specifically, the inequality $\widetilde{\alpha}(Z) \geq r+ p$), we see that $F_{p-\dim Y} {\rm Gr}_V^\chi(\cB_f) = 0$ for all $\chi < r$. Moreover, as $F_{p+1 - \dim Y} {\rm Gr}_V^{r-1}(\cB_f) = 0$, we know that
\[ F_{p-\dim Y} \cK_f = F_{p-\dim Y} {\rm Gr}_V^r(\cB_f),\]
meaning that the $r$th monodromic Hodge piece vanishes: $F_{p-\dim Y} \varphi_{h,1}(\bQ_{\widetilde{Y}}^H[\dim(\widetilde{Y})-1])^r = 0$. This tells us by the equivalence \eqref{eq-TestZero} that $F_{p-\dim Y} \varphi_h(\bQ_{\widetilde{Y}}^H[\dim(\widetilde{Y})-1]) =0$.

We prove the claim in two cases. First, if $\lambda \in (0,1)$, we have by Equation \eqref{eq-nonUnipQ} that $F_{p+1-\dim Y} \varphi_{h,\lambda}(\bQ_{\widetilde{Y}}^H[\dim(\widetilde{Y})-1]) \neq 0$. Moreover, for all $\mu \in (0,\lambda)$, we have 
\[F_{p+1-\dim Y} \varphi_{h,\mu}(\bQ_{\widetilde{Y}}^H[\dim(\widetilde{Y})-1]) = 0,\]
using the equivalence \eqref{eq-TestZero} for the monodromic module $\varphi_{h,\mu}(\bQ_{\widetilde{Y}}^H[\dim(\widetilde{Y})-1])$. This proves the claim in this case.

Finally, if $\lambda = 1$, we know that $F_{p+1-\dim Y} \cB_f \subseteq V^r \cB_f$ and that $F_{p+2-\dim Y}\cB_f \not \subseteq V^{>r-1}\cB_f$. Note that $F_{p+2-\dim Y}\cB_f = F_{p+1-\dim Y} \cB_f + \sum_{i=1}^r (F_{p+1-\dim Y} \cB_f)\partial_{t_i}$, by definition, so the fact that this is not contained in $V^{>r-1}\cB_f$ implies that
\[ F_{p+1-\dim Y} \cK_f \subsetneq F_{p+1-\dim Y} {\rm Gr}_V^r(\cB_f),\]
or, in other words, that $F_{p+1-\dim Y} \varphi_{h,1}(\bQ_{\widetilde{Y}}^H[\dim(\widetilde{Y})-1]) \neq 0$. As we have already observed that $F_{p+1-\dim Y} \varphi_{h,\lambda}(\bQ_{\widetilde{Y}}^H[\dim(\widetilde{Y})-1]) = 0$ for all $\lambda \in (0,1)$ in this case, we are done.
\end{proof}

To end this subsection, we show how to glue the local isomorphisms in equation \eqref{eq-isoLowestHodge} above. The difficulty is the following: for a mixed Hodge module $M$ on a singular variety $X$, we have well-defined objects ${\rm Gr}^F_p {\rm DR}(M)$ in $D^b_{\rm coh}(\cO_X)$ for all $p \in \bZ$. In particular, if we take the first $p$ where this complex is non-zero, we get a well-defined coherent sheaf (the lowest Hodge piece of the filtered right $\cD$-module underlying $M$ in any local smooth embedding). 

Now, if $M$ is such that there exists a collection of local smooth embeddings so that $M$ is monodromic in each embedding, we want this result to hold for the individual monodromic pieces, too.

\begin{lemma}\label{lem-globalization} Let $X\subseteq Y$ be a local complete intersection of pure codimension $r$. For any $\lambda \in \bQ$ and $p\in \bZ$, there is a coherent $\cO_X$-module ${\rm Gr}^F_p {\rm Gr}_V^\lambda(\cB_{X,Y})$ with the property that if $U \subseteq Y$ is an open subset such that $X\cap U \subseteq U$ is a complete intersection defined by a regular sequence $f_1,\dots, f_r \in \cO_{U}(U)$, then there is an isomorphism
\[ {\rm Gr}^F_p {\rm Gr}_V^\lambda(\cB_{X,Y})\vert_U \cong {\rm Gr}^F_p {\rm Gr}_V^\lambda(\cB_f).\]

Moreover, there exists a coherent $\cO_X$-submodule ${\rm Gr}^F_p \cK_{X}$ of ${\rm Gr}^F_p {\rm Gr}_V^r(\cB_{X,Y})$ such that, for any local choice of regular sequence as above, there is an isomorphism
\[ ({\rm Gr}^F_p \cK_{X})\vert_U \cong {\rm Gr}^F_p \cK_f.\]
\end{lemma}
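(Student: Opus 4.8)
The plan is to realize the local objects ${\rm Gr}^F_p {\rm Gr}_V^\lambda(\cB_f)$ as restrictions of globally defined coherent sheaves extracted from the \emph{intrinsic} specialization module, and then to glue by canonicity. Recall from Section~\ref{defc} that ${\rm Sp}_X(\bQ^H_Y[\dim(Y)])$, $\bQ^H_{C_XY}[\dim(X)+r]$ and $\varphi_h(\bQ^H_{\widetilde{Y}}[\dim(\widetilde{Y})-1])$ are globally defined mixed Hodge modules on $C_XY$, and that each is monodromic, so that its underlying $\cD$-module carries a canonical decomposition into generalized $\theta$-eigenspaces. This decomposition refers to no regular sequence, so the monodromic pieces glue as $\cD$-modules on $C_XY$. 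On an open $U$ on which $X\cap U$ is cut out by a regular sequence $f=(f_1,\dots,f_r)$, the $\lambda$-eigenpiece of ${\rm Sp}_X(\bQ^H_Y[\dim(Y)])$ is precisely ${\rm Gr}_V^\lambda(\cB_f)$, while the lowest monodromic piece of $\bQ^H_{C_XY}[\dim(X)+r]$ is $\cK_f$. Thus the underlying (unfiltered) objects glue, with canonical transition isomorphisms on overlaps, and the only remaining point is to glue the Hodge-graded pieces as coherent $\cO_X$-modules.

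The difficulty is that the Hodge filtration on the $\cD$-module underlying a mixed Hodge module on the singular variety $C_XY$ is defined through a local embedding -- here the graph embedding $C_XY\vert_U \hookrightarrow Y\times\bC^r_t$ attached to $f$ -- and a priori depends on that choice. To remove this dependence I would use that $C_XY\xrightarrow{\pi} X$ is a genuine rank-$r$ vector bundle (as $X$ is lci), with zero section $s\colon X\hookrightarrow C_XY$, together with the fact, recalled above, that ${\rm Gr}^F_p {\rm DR}_{C_XY}(M)$ is intrinsic to $M$ for any mixed Hodge module $M$ on $C_XY$. For the $\lambda$-monodromic piece, this associated graded de Rham complex splits off the de Rham differential in the fibre directions, and restricting to the zero section extracts exactly the base sheaf: this is the content of the isomorphism \eqref{eq-isoLowestHodge} and \cite[Lemma 2.6]{D23} at the lowest Hodge level, where $s^*\pi^*={\rm id}$ recovers $F_\sigma \cM^{r-1+\lambda}={\rm Gr}^F_\sigma {\rm Gr}_V^{r-1+\lambda}(\cB_f)$ as an intrinsic $\cO_X$-module. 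An analogous analysis of the fibre-wise de Rham complex identifies each ${\rm Gr}^F_p {\rm Gr}_V^\lambda(\cB_f)$ with a cohomology sheaf of $s^*$ applied to ${\rm Gr}^F_\bullet {\rm DR}_{C_XY}$ of the $\lambda$-monodromic piece. Since the latter is manifestly independent of $f$, this yields canonical identifications of the local objects on overlaps.

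With these canonical transition isomorphisms in hand, the cocycle condition on triple overlaps is automatic, so the local coherent $\cO_X$-modules glue to a global coherent $\cO_X$-module, which we name ${\rm Gr}^F_p {\rm Gr}_V^\lambda(\cB_{X,Y})$. Applying the identical argument to the lowest monodromic piece $\cK_f$ of the globally defined constant module $\bQ^H_{C_XY}[\dim(X)+r]$ produces a global coherent $\cO_X$-module ${\rm Gr}^F_p \cK_X$. Because the inclusion $sp$ of \eqref{f3} is strictly filtered and is itself a morphism of global mixed Hodge modules, the local inclusions ${\rm Gr}^F_p \cK_f \hookrightarrow {\rm Gr}^F_p {\rm Gr}_V^r(\cB_f)$ are compatible on overlaps and glue to a global inclusion ${\rm Gr}^F_p \cK_X \hookrightarrow {\rm Gr}^F_p {\rm Gr}_V^r(\cB_{X,Y})$, as required.

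I expect the main obstacle to be the middle step: showing that the Hodge-graded pieces, and not merely the underlying $\cD$-modules, are intrinsic. The eigenspace decomposition glues for free, but pinning the Hodge filtration down independently of the regular sequence is what forces one to pass to the intrinsic associated-graded de Rham complex and to exploit the vector-bundle structure of $C_XY$ through the zero-section restriction.
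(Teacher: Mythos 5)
Your overall strategy --- realize the local objects as pieces of the globally defined mixed Hodge modules on $C_XY$ and argue that the Hodge-graded pieces are intrinsic --- is not the paper's route, and the key middle step has a genuine gap. The paper instead glues by hand: for two regular sequences with $(g)=A\cdot(f)$, the automorphism $u_A$ of $Y\times\bC^r$ induces an explicit $(F,V)$-bifiltered $\cO_Y$-linear isomorphism $\tau^A_{f,g}\colon\cB_f\to\cB_g$ (sending $\mu\,\partial_t^\alpha\delta_f$ to $\mu\prod_i(\sum_j a_{ji}\partial_{\eta_j})^{\alpha_i}\delta_g$); independence of $A$ is checked using that entries of $A-A'$ lie in $(f_1,\dots,f_r)$ and that ${\rm Gr}^F_p{\rm Gr}_V^\lambda(\cB_f)$ is scheme-theoretically supported on $X$; the cocycle condition then follows from $u_{AB}=u_B\circ u_A$. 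Your proposal replaces all of this with an appeal to canonicity that is not available at the required level of generality.

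Concretely, the gap is in the sentence ``An analogous analysis of the fibre-wise de Rham complex identifies each ${\rm Gr}^F_p {\rm Gr}_V^\lambda(\cB_f)$ with a cohomology sheaf of $s^*$ applied to ${\rm Gr}^F_\bullet {\rm DR}_{C_XY}$ of the $\lambda$-monodromic piece.'' What is intrinsic to a mixed Hodge module on the singular variety $C_XY$ is the complex ${\rm Gr}^F_p{\rm DR}(M)\in D^b_{\rm coh}$, not the individual $\cO$-modules $F_p\cM^\chi$ in an ambient embedding; and ${\rm Gr}^F_p{\rm DR}$ of a monodromic module mixes the terms ${\rm Gr}^F_{p-j}{\rm Gr}_V^{\lambda+\ell}$ for varying $j,\ell$ (the fibre derivatives $\partial_{z_i}$ shift both the $F$-degree and the $\theta$-eigenvalue), so its cohomology sheaves do not isolate a single ${\rm Gr}^F_p{\rm Gr}_V^\lambda(\cB_f)$. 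The one situation where such an identification is available is the lowest Hodge piece under the hypothesis of \eqref{eq-isoLowestHodge}, i.e.\ Corollary \ref{cor-lowestHodge} --- but that corollary is proved \emph{using} Lemma \ref{lem-globalization} together with a further gluing of the polynomial part, so invoking it here is circular; and the paper explicitly warns (before Theorem \ref{thm-HRH}) that the ``pullback from the base'' structure fails away from the index governed by the minimal exponent. Relatedly, the claim that ``the $\lambda$-eigenpiece of ${\rm Sp}_X(\bQ_Y^H[\dim Y])$ is precisely ${\rm Gr}_V^\lambda(\cB_f)$'' conflates the monodromy-eigenvalue decomposition (which yields the whole tower $\bigoplus_{\ell\in\bZ}{\rm Gr}_V^{\lambda+\ell}(\cB_f)$) with the finer $\theta$-eigenspace decomposition. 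Without an actual construction of the transition isomorphisms, the assertion that the cocycle condition is ``automatic'' by canonicity is unsupported; the explicit matrix computation, or an equivalent substitute, is the real content of the lemma.
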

\begin{proof} We cover $Y= \bigcup_{i\in I} U_i$ such that $X\cap U_i \subseteq U_i$, if non-empty, is a complete intersection defined by a regular sequence $f_1,\dots, f_r \in \cO_{U_i}(U_i)$ for all $i \in I$. By replacing $Y$ with $U_i \cap U_j$ for $i \neq j$, we want to construct $\cO_Y$-linear isomorphisms
\[ {\rm Gr}^F_p {\rm Gr}_V^\lambda(\cB_f) \cong {\rm Gr}^F_p {\rm Gr}_V^\lambda(\cB_g)\]
for two regular sequences $(f) = f_1,\dots, f_r, (g) = g_1,\dots, g_r$ defining $X$ inside $Y$. Moreover, we want these isomorphisms to satisfy a cocycle condition if we had a third regular sequence $(h) = h_1,\dots, h_r$ defining $X$.

We follow \cite[Remark 4.8]{CDMO}, so we write $g_i = \sum a_{ij} f_j$ for $1\leq i \leq r$. This gives an $r\times r$-matrix $A = (a_{ij})$ whose determinant $\det(A)$ does not vanish on any point of $X$ (as both $(f), (g)$ are regular sequences). By replacing $Y$ with the non-vanishing locus of $\det(A)$, we have an automorphism
\[u_A \colon Y \times \mathbf C^r_t \to Y \times \mathbf C^r_\eta, (y,\underline{t}) \mapsto (y, \sum a_{ij} t_j),\]
so that if $i_f$ is the graph embedding along $f$ and $i_g$ is the graph embedding along $g$, then $i_g = u \circ i_f$. Thus, there is an $(F,V)$-bifiltered $\cO_Y$-linear isomorphism $\tau_{f,g}^A \colon \cB_f \cong \cB_g$, but where the $\cD_{Y \times \mathbf C^r}$-action on $\cB_f$ is twisted by $u$. Note that the twisted action preserves the $\cO_Y$-module structure.

As $\cB_f = \bigoplus \omega_Y \partial_t^\alpha \delta_f$, it suffices to explain for each $\mu \in \omega_Y$ where the element $\mu \partial_t^\alpha \delta_f$ gets sent. The relation $\eta_i = \sum_{j=1}^r a_{ij} t_j$ gives that $\partial_{t_i} = \sum_{j=1}^r a_{ji} \partial_{\eta_j}$, which shows that the Euler operators are identified: $\theta_t = \theta_\eta$. Then the isomorphism is given by
\[ \mu \partial_t^\alpha \delta_f \mapsto \mu \prod_{i=1}^r \left(\sum_{j=1}^r a_{ji} \partial_{\eta_j}\right)^{\alpha_i}\delta_g.\]

Restricting to ${\rm Gr}^F_p{\rm Gr}_V^\lambda$, we get $\cO_Y$-linear isomorphisms
\begin{equation} \label{eq-gluingIso} {\rm Gr}^F_p {\rm Gr}_V^\lambda(\cB_f) \cong {\rm Gr}^F_p {\rm Gr}_V^\lambda(\cB_g).\end{equation}

We must check that these isomorphisms are independent of the choice of matrix $A$. Let $A'$ be another matrix such that $A(f) = A'(f) = (g)$. Then $(A-A')(f) = 0$, so by the regular sequence property (for example, exactness of the Koszul complex), we see that each entry of $A-A'$ lies in $(f_1,\dots, f_r)$. Using that $ {\rm Gr}^F_p {\rm Gr}_V^{\lambda}(\cB_f)$ is scheme-theoretically supported on $X$, we see then that $\tau_{f,g}^A = \tau_{f,g}^{A'}$ on these objects. We write $\tau_{f,g}$ for those isomorphisms.

If $h_1,\dots, h_r$ is another regular sequence defining $X$ in $Y$, then we can write $(h) = B \cdot (g)$ for some matrix $B$ whose determinant $\det(B)$ doesn't vanish on $X$. Moreover, we have $(h) = BA (f)$ and we can replace $Y$ by the non-vanishing locus of $\det(AB) = \det(A)\det(B)$. In this case, we have $u_{AB} = u_B \circ u_A$ as automorphisms of $Y \times \mathbf C^r$ which gives $(F,V)$-bifiltered isomorphisms
\[ \tau^{BA}_{f,h} = \tau^B_{g,h} \circ \tau^A_{f,g}: \cB_f \to \cB_g \to \cB_h,\]
and so the induced isomorphisms
\[ \tau_{f,h} = \tau_{g,h} \circ \tau_{f,g} \colon {\rm Gr}^F_p {\rm Gr}_V^\lambda(\cB_f) \cong {\rm Gr}^F_p {\rm Gr}_V^\lambda(\cB_g) \cong {\rm Gr}^F_p {\rm Gr}_V^\lambda(\cB_h)\]
satisfy the cocycle condition.

We conclude from this that if $X\subseteq Y$ is an arbitrary local complete intersection subvariety and we take an open cover $Y = \bigcup_{i\in I} U_i$ such that $X\cap U_i \subseteq U_i$ is a complete intersection for all $i$, then the sheaves ${\rm Gr}^F_p {\rm Gr}_V^{\lambda}(\cB_{f,U_i})$ glue together to give $\cO_Y$-coherent sheaves
\[{\rm Gr}^F_p {\rm Gr}_V^{\lambda}(\cB_{X,Y}) \text{ scheme-theoretically supported on } X.\]

Finally, note that $\tau_{f,g}^A$ maps $F_p \cK_f = \bigcap_{i=1}^r \ker(\partial_{t_i} \colon F_p {\rm Gr}_V^r(\cB_f) \to F_{p+1} {\rm Gr}_V^{r-1}(\cB_f))$ isomorphically onto $F_p \cK_g = \bigcap_{i=1}^r \ker(\partial_{\eta_i} \colon F_p {\rm Gr}_V^r(\cB_g) \to F_{p+1} {\rm Gr}_V^{r-1}(\cB_g))$, because the matrix $A$ is invertible. Thus, we can glue together to give $\cO_Y$-coherent sheaves ${\rm Gr}^F_p \cK_{X}$ supported on $X$.
\end{proof}

\begin{remark} Another way to see that ${\rm Gr}^F_p \cK_f$ is independent of the defining equations is by its identification with ${\rm Gr}^F_{p+r} \mathbf Q_X^H[\dim(X)]$.
\end{remark}

Let ${\rm Gr}^F_p \cQ_X$ be the quotient of ${\rm Gr}^F_p {\rm Gr}^r_V(\cB_{X,Y})$ by ${\rm Gr}^F_p \cK_X$, which is a coherent $\cO_X$-module supported on ${\rm Sing}(X)$. Note that for any $\lambda \notin \bZ_{\geq r}$, the $\cO_X$-module ${\rm Gr}^F_p {\rm Gr}_V^\lambda(\cB_{X,Y})$ is supported on ${\rm Sing}(X)$, too. If $\pi \colon C_XY \to X$ is the projection, we also let $\pi$ denote the vector bundle projection $\pi \colon \pi^{-1}({\rm Sing}(X)) = \Sigma \to \Sigma_X \colonequals {\rm Sing}(X)$.

In the statement of the Corollary, we also study the lowest non-vanishing ${\rm Gr}{\rm DR}$ of the dual $\mathbf D(\varphi_{h,1}(\mathbf Q_{\widetilde{Y}}[\dim(\widetilde{Y})-1])(-\dim Y))$. We state the corollary including the Tate twist by $(-\dim Y)$ because below it is the Tate twisted module that will play a role, and we will see in the proof why the Tate twist naturally arises.

\begin{corollary} \label{cor-lowestHodge} Let $X\subseteq Y$ be a local complete intersection of pure codimension $r$. Let $\widetilde{\alpha}(X) = r+q+\mu$ for some $q\in \bZ_{\geq -1}$ and $\mu \in (0,1]$. Then the lowest non-vanishing index for ${\rm Gr}^F_\bullet {\rm DR}\varphi_{h,\mu}(\mathbf Q_{\widetilde{Y}}^H[\dim(\widetilde{Y})-1])$ occurs at $q+1-\dim Y$, and if $\pi \colon \Sigma \to \Sigma_X$ is the vector bundle projection, then
\[ {\rm Gr}^F_{q+1-\dim Y}{\rm DR} \varphi_{h,\mu}(\mathbf Q_{\widetilde{Y}}^H[\dim(\widetilde{Y})-1]) = \begin{cases} \pi^*({\rm Gr}^F_{q+1-\dim Y} {\rm Gr}_V^{r-1+\mu}(\cB_{X,Y})) & \mu \in (0,1) \\ \pi^*({\rm Gr}^F_{q+1-\dim Y} \cQ_X) & \mu =1\end{cases}.\]

Similarly, if $\mu = 1$, the lowest non-vanishing index for $${\rm Gr}^F_\bullet {\rm DR}(\mathbf D(\varphi_{h,1}(\mathbf Q_{\widetilde{Y}}[\dim(\widetilde{Y})-1])(-\dim Y)))$$ occurs at $q+2-\dim Y$, and we have an isomorphism
\[{\rm Gr}^F_{q+2-\dim Y}{\rm DR}(\mathbf D(\varphi_{h,1}(\mathbf Q_{\widetilde{Y}}[\dim(\widetilde{Y})-1])(-\dim Y))) \cong \pi^*({\rm Gr}_{q+2-\dim Y}^F {\rm Gr}_V^{r-1}(\cB_{X,Y})).\]
\end{corollary}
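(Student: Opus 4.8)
The plan is to reduce to the local complete-intersection model, pass to the dual, and then re-run the monodromic-module analysis of Theorem~\ref{tLCIQ}, but anchored at the boundary eigenvalue $r-1$ rather than $r$. By Lemma~\ref{lem-globalization} it suffices to produce the isomorphism locally on an open $U\subseteq Y$ with $X\cap U=V(f_1,\dots,f_r)$ and then glue, so I fix such a $U$, embed $C_XY|_U\hookrightarrow Z:=U\times\mathbf C^r_z$ (smooth, of dimension $N+r$ with $N:=\dim Y$), and view $\cN:=\mathbf D\big(\varphi_{h,1}(\mathbf Q_{\widetilde Y}^H[\dim\widetilde Y-1])(-N)\big)$ as a monodromic filtered right $\sD_Z$-module supported on $\Sigma$. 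The basic principle I will use throughout is that for any filtered $\sD_Z$-module the complex ${\rm Gr}^F_p{\rm DR}$ has in cohomological degree $-j$ the term ${\rm Gr}^F_{p-j}(-)\otimes_{\cO}\wedge^j\Theta_Z$; hence the smallest $p$ with ${\rm Gr}^F_p{\rm DR}\neq 0$ equals the lowest index $\sigma$ of the Hodge filtration of the module, and there ${\rm Gr}^F_\sigma{\rm DR}$ reduces to ${\rm Gr}^F_\sigma(-)$ in a single degree. Thus I only need to control $F_\sigma\cN$ together with its monodromic eigenspace decomposition.

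First I locate the lowest piece. Dualizing the unipotent short exact sequence underlying \eqref{f3} and using that ${\rm Sp}_X$ commutes with $\mathbf D$ together with $\mathbf D(\mathbf Q_Y^H[N])=\mathbf Q_Y^H[N](N)$, the module $\cN$ is identified with a monodromic module whose eigenspaces are governed, through the duality rule $\theta^t=-\theta-r$ (so $\chi\mapsto r-\chi$ on eigenvalues) and the induced self-duality $\mathbf D\,{\rm Sp}_X(\mathbf Q_Y^H[N])\cong{\rm Sp}_X(\mathbf Q_Y^H[N])(N)$ pairing ${\rm Gr}_V^{\ell}$ with ${\rm Gr}_V^{r-\ell}$, by the $V$-graded pieces of $\cB_f$. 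The Tate twist $(-N)$ is precisely what normalizes the Hodge shift coming from $\mathbf D$, so that the eigenspace $\cN^{r-1}$ is identified, as a filtered $\cO$-module, with ${\rm Gr}_V^{r-1}(\cB_f)$. Theorem~\ref{tLCI}, in the form extracted in the proof of Theorem~\ref{tLCIQ} for $\mu=1$ (namely $F_{q+1-N}{\rm Gr}_V^{r-1}\cB_f=0$ and $F_{q+2-N}{\rm Gr}_V^{r-1}\cB_f\neq 0$), then pins the lowest Hodge index of this eigenspace at $\sigma=q+2-N$, and the same bounds give $F_{q+2-N}\cN^{\chi}=0$ for $\chi<r-1$ and $F_{q+1-N}\cN=0$.

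With $F_{q+2-N}\cN=\bigoplus_{\chi\geq r-1}F_{q+2-N}\cN^{\chi}$ and nonzero lowest eigenspace at $\chi=r-1$, I apply the zero-section variant recorded just before the proof of Theorem~\ref{tLCIQ}: if $\cM$ is monodromic with $F_\sigma\cH^r s^!(\cM)=0$ and $F_\sigma\cM=\bigoplus_{\chi\geq r-1}F_\sigma\cM^{\chi}$, then $\pi^*(F_\sigma\cM^{r-1})\cong F_\sigma\cM$. Taking $\cM=\cN$ and $\sigma=q+2-N$ yields
\[ {\rm Gr}^F_{q+2-N}{\rm DR}(\cN)=F_{q+2-N}\cN=\pi^*\big(F_{q+2-N}\cN^{r-1}\big)\cong\pi^*\big({\rm Gr}^F_{q+2-N}{\rm Gr}_V^{r-1}(\cB_f)\big), \]
and since $F_{q+1-N}\cN=0$ the index $q+2-N$ is indeed the lowest non-vanishing one. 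Gluing over $U$ by Lemma~\ref{lem-globalization} (which globalizes ${\rm Gr}^F_p{\rm Gr}_V^{r-1}(\cB_f)$ to ${\rm Gr}^F_p{\rm Gr}_V^{r-1}(\cB_{X,Y})$) gives the stated global isomorphism.

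The main obstacle is the duality bookkeeping in the second paragraph: I must verify that MHM duality on the singular $C_XY$, once combined with the twist $(-N)$ and the self-duality of ${\rm Sp}_X$, genuinely identifies $\cN^{r-1}$ with ${\rm Gr}_V^{r-1}(\cB_f)$ and shifts its lowest Hodge index up to $q+2-N$; this is exactly the mechanism producing the $+1$ over the non-dual case and the appearance of ${\rm Gr}_V^{r-1}$ in place of $\cQ_X$, and it is where the $(-N)$ twist earns its keep. The second delicate point is checking the hypothesis $F_{q+2-N}\cH^r s^!(\cN)=0$ needed to invoke the zero-section variant; this is the dual incarnation of the generation statement $F_\sigma\cM^r=\sum_i F_\sigma\cM^{r-1}z_i$ from \cite[Theorem~2]{CD}, and I expect to deduce it from the structure of the constant-sheaf inclusion $sp$ in \eqref{f3} (equivalently, from the fact that the dual constant sheaf has no lowest Hodge contribution at eigenvalues $\geq r$).
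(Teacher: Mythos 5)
Your proposal addresses only the second assertion of the corollary (the dual statement for $\mu=1$); the first assertion, identifying ${\rm Gr}^F_{q+1-\dim Y}{\rm DR}\,\varphi_{h,\mu}(\bQ^H_{\widetilde Y}[\dim(\widetilde Y)-1])$ with $\pi^*({\rm Gr}^F_{q+1-\dim Y}{\rm Gr}_V^{r-1+\mu}(\cB_{X,Y}))$, resp.\ $\pi^*({\rm Gr}^F_{q+1-\dim Y}\cQ_X)$, is never argued. In the paper that part is obtained from Theorem \ref{tLCIQ} together with \eqref{eq-isoLowestHodge}, and most of the written proof is devoted to making the local isomorphisms glue: the identification $F_\sigma\cM^{r-1+\mu}[t_1,\dots,t_r]\cong F_\sigma\cM^{\mu+\bZ}$ involves the fiber coordinates, which transform by the matrix $A$ under a change of regular sequence, so one must construct the modified gluing maps $\widetilde{\tau}_{f,g}$ and verify a cocycle condition. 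Lemma \ref{lem-globalization}, which you cite, only glues the base sheaves ${\rm Gr}^F_p{\rm Gr}_V^\lambda(\cB_{X,Y})$ on $X$ and does not by itself give compatibility of the pullback description on $\Sigma$.

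For the dual statement your architecture matches the paper's (dualize the unipotent part of \eqref{f3}, locate the lowest Hodge index via Theorem \ref{tLCI}, apply the zero-section criterion, glue), but the two steps you flag as ``obstacles'' are precisely the substance of the proof, and the mechanism you propose for the first one is wrong. The rule $\theta^t=-\theta-r$, read as $\chi\mapsto r-\chi$ on eigenvalues, would pair ${\rm Gr}_V^{r-1}$ with ${\rm Gr}_V^{1}$, not with itself; and it is not how duality acts on monodromic pieces in any case (e.g.\ $\omega_{\bC^r}$ is self-dual yet has monodromic pieces only in degrees $\geq r$). The paper identifies $\cN^{r-1}$ with ${\rm Gr}_V^{r-1}(\cB_f,F)$ by a different route: dualizing the unipotent part of \eqref{f3} exhibits $\cN$ as a sub of the self-dual ${\rm Sp}(\bQ^H_Y[\dim Y])$ with quotient $\mathbf D(\bQ^H_{C_XY}[\dim(X)+r])(-\dim Y)$, and this quotient, being the dual of a module pulled back from $X$, has monodromic pieces only in degrees $\geq r$; hence in degree $r-1$ the inclusion of $\cN$ into ${\rm Sp}$ is a filtered isomorphism. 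The second obstacle, $F_{q+2-\dim Y}\cH^r s^!(\cN)=0$, is not something to ``expect'': it follows in one line by dualizing and Tate-twisting the vanishing $s^*\varphi_{h,1}(\bQ^H_{\widetilde Y}[\dim(\widetilde Y)-1])=0$, which comes from \eqref{zero} applied to \eqref{f2}. As written, the proposal asserts the correct conclusions but leaves the decisive identifications unproved, and justifies one of them by an incorrect duality rule.
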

\begin{proof} The claim that the lowest Hodge piece occurs at $F_{q+1-\dim Y}$ is implied by Theorem \ref{tLCIQ}.

We proceed with a gluing argument like the above. The only difficulty is that the $\cO_{\mathbf C^r}$-action is twisted in the isomorphism constructed above, so we must have care in defining the gluing isomorphisms.

Let $(f) = f_1,\dots, f_r, (g) = g_1,\dots, g_r$ be two regular sequences in $\cO_Y(Y)$ defining the complete intersection $X$. The proof of Theorem \ref{tLCIQ} and equation \eqref{eq-isoLowestHodge} shows that, assuming $\widetilde{\alpha}(X) = r+q+\mu$, the lowest Hodge piece of $\varphi_{h,\mu}(\mathbf Q_{\widetilde{Y}}^H[\dim (\widetilde{Y})-1])$ can be represented using the regular sequence $(f)$ by
\[ {\rm Gr}^F_{q+1-\dim Y} {\rm DR}(\varphi_{h,\mu}(\mathbf Q_{\widetilde{Y}}^H[\dim (\widetilde{Y})-1])) = \begin{cases} {\rm Gr}^F_{q+1-\dim Y} {\rm Gr}_V^{r-1+\mu}(\cB_f)[t_1,\dots, t_r] & \mu \in (0,1) \\ {\rm Gr}^F_{q+1-\dim Y}({\rm Gr}_V^r(\cB_f)/\cK_f)[t_1,\dots, t_r] & \mu =1 \end{cases},\]
and similarly with $\cB_g$ in place of $\cB_f$ and $\eta$ in place of $t$. 

If we choose a matrix $A$ such that $A(f) =(g)$, recall that the fiber coordinates satisfy $\eta_i = \sum_{j=1}^r a_{ij} t_j$. As $A$ is invertible (up to replacing $Y$ by a neighborhood of $X$), we can write $t_i = \sum_{j=1}^r a'_{ij} \eta_j$ for $A^{-1} = A' = (a_{ij}')$ a matrix with entries in $\cO_Y(Y)$.

Then we define an isomorphism for $\mu \in (0,1)$:
\[ \widetilde{\tau}_{f,g} \colon {\rm Gr}^F_{q+1-\dim Y} {\rm Gr}_V^{r-1+\mu}(\cB_f)[t_1,\dots, t_r] \cong {\rm Gr}^F_{q+1-\dim Y} {\rm Gr}_V^{r-1+\mu}(\cB_g)[\eta_1,\dots, \eta_r]\]
and similarly if $\mu = 1$:
\[ \widetilde{\tau}_{f,g} \colon {\rm Gr}^F_{q+1-\dim Y}({\rm Gr}_V^r(\cB_f)/\cK_f)[t_1,\dots, t_r] \cong {\rm Gr}^F_{q+1-\dim Y}({\rm Gr}_V^r(\cB_g)/\cK_g)[\eta_1,\dots, \eta_r],\]
both given by
\[ m t^\alpha \mapsto \tau_{f,g}(m) \prod_{i=1}^r \left(\sum_{j=1}^r a_{ij}' \eta_j \right)^{\alpha_i}.\]

This is clearly $\cO_{Y \times \mathbf C^r}$-linear. As above, it is not hard to check that this is independent of the choice of matrix $A$, and hence see that these isomorphisms satisfy a cocycle condition.

For the last claim, let $s\colon \Sigma_X \to \Sigma$ be the inclusion of the zero section. Then dualizing the vanishing $s^* \varphi_{h,1}(\bQ^H_{\widetilde{Y}}[\dim(\widetilde{Y})-1]) = 0$ (obtained by using \eqref{zero}) and Tate twisting gives us that 
\begin{equation} \label{eq-shriekvanish} s^!(\mathbf D(\varphi_{h,1}(\bQ^H_{\widetilde{Y}}[\dim(\widetilde{Y})-1]))(-\dim Y)) = 0.\end{equation} 

Dualizing the unipotent part of the short exact sequence \eqref{f3} and using that $\bQ^H_Y[\dim Y]$ is pure polarizable of weight $\dim Y$, we get a short exact sequence
\begin{multline*} 0 \to \mathbf D(\varphi_{h,1}(\bQ^H_{\widetilde{Y}}[\dim(\widetilde{Y})-1]))(-\dim Y) \to {\rm Sp}(\bQ_Y^H[\dim(Y)]) \\ \to \mathbf D(\bQ^H_{C_XY}[\dim(X)+r])(-\dim Y) \to 0.\end{multline*}

If we look locally, over an open subset $U \subseteq Y$ such that $U\cap X \subseteq U$ is defined by a regular sequence $f_1,\dots, f_r \in \cO_U(U)$, then we can see that $\mathbf D(\bQ^H_{C_XY}[\dim(X)+r])$ only has non-zero monodromic pieces in degrees $\geq r$ (which is the same property that $\bQ^H_{C_XY}[\dim(X)+r]$ has, essentially because they are pulled back from mixed Hodge modules on $X$). In particular, the $(r-1)$-st monodromic piece of $\mathbf D(\varphi_{h,1}(\bQ^H_{\widetilde{Y}}[\dim(\widetilde{Y})-1]))(-\dim Y)$, as a filtered $\cD$-module, agrees with ${\rm Gr}_V^{r-1}(\cB_f,F)$. The assumption $\widetilde{\alpha}(X) = r+q+1$ (here $\mu =1$) implies $F_{q+1-\dim Y} {\rm Gr}_V^{r-1}(\cB_f) = 0$ but $F_{q+2-\dim Y} {\rm Gr}_V^{r-1}(\cB_f) \neq 0$. Finally, using the vanishing (from \eqref{eq-shriekvanish}) $F_{q+2-\dim Y} \cH^r s^!(\mathbf D(\varphi_{h,1}(\bQ^H_{\widetilde{Y}}[\dim(\widetilde{Y})-1]))(-\dim Y)) = 0$ and the gluing argument above proves the claim.
\end{proof}


\subsection{Characteristic classes of lci varieties}\label{charc}
In the notations of Section \ref{defc}, let $$\pi\colon C_XY \to X$$ be the projection, and let $$s=s_0 \colon X \hookrightarrow C_XY$$ be the inclusion of the zero section of $C_XY$. As before, we use $H_*(X)$ to denote either $H_{2*}^{BM}(X;\bQ)$ or $CH_*(X)_\bQ$. The Hirzebruch-Milnor class of the local complete intersection $X \subset Y$ can now be defined by restricting to the zero section the corresponding Hirzebruch-Milnor class of the hypersurface $C_X Y=\{h=0\} \subset \widetilde{Y}$.

\begin{definition}\label{hmci}
The (un-normalized) {\it Hirzebruch-Milnor class} of the local complete intersection $X \subset Y$ is defined by
\begin{equation}\label{unHM}
M_{y*}(X \subset Y):=s^* T_{y*}(\varphi_h \bQ^H_{\widetilde{Y}}) \in H_*(X)[y],
\end{equation}
where $s^*:H_*(C_XY) \to H_{*-r}(X)$ is the Gysin morphism (i.e., the inverse of the isomorphism $\pi^*$, see \cite[Definition 3.3]{Fu}), linearly extended over $\bQ[y]$.
A corresponding (unipotent) class $M^{\{1\}}_{y*}(X \subset Y)$ is defined by using the unipotent vanishing cycles $\varphi_{h,1}$ instead, and similarly we define the class $M^{\{\neq 1\}}_{y*}(X \subset Y)$  by using the non-unipotent vanishing cycles $\varphi_{h,\neq 1}$. 

A normalized version $\widehat{M}_{y*}(X \subset Y)$ of the Hirzebruch-Milnor class is defined by using the Adams-type operations $\Psi_k$, namely, by precomposing $M_{y*}(X \subset Y)$ with the operation $\{\Psi_k\}_{k \geq 0}$, which on a class $\gamma \in H_k(X)$ acts by $\Psi_k(\gamma)=(1+y)^{-k} \cdot \gamma$. Note that since $s^*:H_*(C_XY) \to H_{*-r}(X)$ drops the Chow degree by $r$, an equivalent way to define $\widehat{M}_{y*}(X \subset Y)$ is to replace $T_{y*}$ in \eqref{unHM} by $(1+y)^r \cdot \widehat{T}_{y*}$.
\end{definition}

\begin{remark}\label{rem47}
Using the notations of the previous section, we can define similarly un-normalized and normalized {\it spectral Hirzebruch-Milnor classes}, denoted by $M^{sp}_{t\ast}(X\subset Y)$ and resp. $\widehat{M}^{sp}_{t\ast}(X\subset Y)$, as in Definition \ref{definition: spectral Hirzebruch-Milnor class}. 
\end{remark}

\begin{remark}\label{supp2} 
As above, let
$$\Sigma_X\colonequals\Sing(X)$$ denote the singular locus of $X$. 
Since $\varphi_h \bQ^H_{\widetilde{Y}}$ is supported on $\Sigma:=C_XY|_{\Sigma_X}=\pi^{-1}(\Sigma_X)$, by the functoriality of the transformation $T_{y*}$ for the closed inclusion $\Sigma \subset C_XY$ we can view $T_{y*}(\varphi_h \bQ^H_{\widetilde{Y}})$ as a class localized on $\Sigma$, i.e., in $H_*(\Sigma)[y]$. The functoriality of the Gysin map $s^*$ then implies that we can view the class $M_{y*}(X \subset Y) \in H_*(\Sigma_X)[y]$ as a localized class as well. In particular, 
\begin{center}
$M_{y,i}(X \subset Y)=0 \in H_i(\Sigma_X)[y]$ \ for $i> \dim(\Sing(X))$.
\end{center}
Similar considerations apply to the normalized Hirzebruch-Milnor class $\widehat{M}_{y*}(X \subset Y)$.
\end{remark}

\begin{remark} \label{rmk-compareSpectra} If $X \subseteq Y$ is a complete intersection subvariety of pure codimension $r$ and $x\in X$, then the \textit{reduced spectrum} of \cite{DMS} is given (see \cite[Lemma 4.1]{D23}) by choosing a general point $\xi$ of $\{x\} \times \mathbf C^r \subseteq C_X Y = X\times \bC^r$, with closed embedding $i_\xi \colon \{\xi\} \to C_X Y$, and defining
\[ {\rm Sp}(X,x) = \sum_{\alpha \in \mathbf Q} \overline{m}_{\alpha,x} t^\alpha,\]
where
\[ \overline{m}_{\alpha,x} = \sum_{k\in \mathbf Z} (-1)^k \dim_{\mathbf C} {\rm Gr}^F_{\lceil \alpha \rceil - \dim X -1} \cH^{k-r}i_{\xi}^*(\varphi_{h,\exp(-2\pi i \alpha)}(\mathbf Q_{\widetilde{Y}}^H[\dim(\widetilde{Y}) -1]))\]
\[ = (-1)^{\dim X} \sum_{k\in \mathbf Z} (-1)^k \dim_{\mathbf C} {\rm Gr}^F_{\lceil \alpha \rceil - \dim X - 1} \cH^k i_\xi^*(\varphi_{h,\exp(-2\pi i \alpha)}(\mathbf Q_{\widetilde{Y}}^H)).\]
On the other hand, we have that the coefficient of $t^\alpha$ in ${\rm Sp}'(i_\xi^*(\varphi_h(\bQ^H_{\widetilde{Y}})))$ is
\[ \sum_{k\in \mathbf Z} (-1)^k \dim_{\mathbf C} {\rm Gr}^F_{\lceil -\alpha \rceil} \cH^k i_\xi^*(\varphi_{h,\exp(2\pi i \alpha)}(\bQ^H_{\widetilde{Y}})).\]
We see, then, that if $\iota \colon \mathbf Z[t^{\pm 1/e}]$ is the involution sending $t^\alpha$ to $t^{-\alpha}$, then
\begin{equation} \label{eq-DualSpectrum} {\rm Sp}(X,x) = (-1)^{\dim X} t^{\dim X +1} \iota({\rm Sp}'(i_\xi^*(\varphi_h (\bQ_{\widetilde{Y}}^H)))).\end{equation}
\end{remark}

We also have the following result in the spirit of \cite[Example 3.9]{CMSS}. In the proposition statement, we let ${\rm Sp}'(X,x) = {\rm Sp}'(i_\xi^*\varphi_h(\bQ^H_{\widetilde{Y}}))$. When $X$ has an isolated hypersurface singularity at $x$, for the spectrum as defined in \cite{DMS}, we can take the element $\xi$ to be $(x,1) \in \{x\} \times \mathbf C^r$. 
Moreover, the isomorphism \eqref{eq50monodromy} below gives
\[ i_\xi^* \varphi_{h,\exp(2\pi i \alpha)}(\bQ_{\widetilde{Y}}^H) \cong i_x^* \varphi_{f,\exp(2\pi i \alpha)} (\bQ_Y^H),\]
and so we see that ${\rm Sp'}(X,x)$ as just defined agrees with the definition in Example \ref{ex311}.

\begin{proposition}\label{ptop}
Let $X\subset Y$ be a codimension $r$ local complete intersection in a smooth complex algebraic variety $Y$. Assume for simplicity that $\Sigma_X:=\Sing(X)$ is irreducible.  
Then
\begin{equation}\label{f12}
M_{y*}(X \subset Y)=(1+y)^{\dim(\Sigma_X)+r} \cdot \chi_y(i_\xi^* \varphi_h \bQ^H_{\widetilde{Y}}) \cdot [\Sigma_X] + {\textrm l.o.t.} \in H_{\dim(\Sigma_X)}(\Sigma_X)[y] + \cdots,
\end{equation}
and 
\begin{equation}\label{f12n}
\widehat{M}_{y*}(X \subset Y)=(1+y)^{r} \cdot \chi_y(i_\xi^* \varphi_h \bQ^H_{\widetilde{Y}}) \cdot [\Sigma_X] + {\textrm l.o.t.} \in H_{\dim(\Sigma_X)}(\Sigma_X)[y] + \cdots,
\end{equation}
where $l.o.t$ refers to lower degree homological terms, and $\xi$ is a general point in the fiber of $\pi:C_XY \to X$ over a general (smooth) point  $x\in \Sigma_X$. Similar formulae holds for the spectral classes $M^{sp}_{t\ast}(X\subset Y)$ and $\widehat{M}^{sp}_{t\ast}(X\subset Y)$, upon substituting $t=-y$ and replacing the $\chi_y$-polynomial in \eqref{f12} and resp. \eqref{f12n} with the reduced dual spectrum ${\rm Sp}'(X,x)$. 
\end{proposition}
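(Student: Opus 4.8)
The plan is to extract the top homological-degree term of $T_{y*}(\varphi_h\bQ^H_{\widetilde Y})$ on its support and transport it through the Gysin map $s^*$, adapting the leading-term computation of \cite[Example 3.9]{CMSS}. First I would record, using Remark \ref{supp}, that $\varphi_h\bQ^H_{\widetilde Y}$ is a single mixed Hodge module supported on $\Sigma=\pi^{-1}(\Sigma_X)$; since $\Sigma_X$ is assumed irreducible and $\pi\colon\Sigma\to\Sigma_X$ is a rank-$r$ vector bundle, $\Sigma$ is irreducible of dimension $\dim\Sigma_X+r$. Hence $T_{y*}(\varphi_h\bQ^H_{\widetilde Y})\in H_*(\Sigma)[y]$ is concentrated in degrees $\le\dim\Sigma$, and by \eqref{unHM} the class $M_{y*}(X\subset Y)=s^*T_{y*}(\varphi_h\bQ^H_{\widetilde Y})$ is concentrated in degrees $\le\dim\Sigma_X$. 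Since $s^*$ is an isomorphism lowering homological degree by exactly $r$, the degree-$\dim\Sigma_X$ part of $M_{y*}(X\subset Y)$ is the image under $s^*$ of the degree-$\dim\Sigma$ part of $T_{y*}(\varphi_h\bQ^H_{\widetilde Y})$, and everything else is absorbed into lower-order terms. This reduces the problem to a leading-term computation on $\Sigma$.

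The key step is the leading-term formula. I would let $j\colon\Sigma^o\hookrightarrow\Sigma$ be a dense open smooth stratum on which $\varphi_h\bQ^H_{\widetilde Y}$ restricts to a (shifted) admissible variation of mixed Hodge structure $\mathcal V$; a general point $\xi$ of $\Sigma$ --- equivalently a general point of the fiber of $\pi$ over a general smooth point $x\in\Sigma_X$ --- then lies in $\Sigma^o$, with generic stalk $i_\xi^*\varphi_h\bQ^H_{\widetilde Y}$. Using additivity of $T_{y*}$ and the triangle $j_!j^*\to\mathrm{id}\to i_*i^*$ for the complementary inclusion $i\colon\Sigma\setminus\Sigma^o\hookrightarrow\Sigma$, the boundary contribution $T_{y*}(i_*i^*\varphi_h\bQ^H_{\widetilde Y})$ is supported in dimension $<\dim\Sigma$ and feeds only into lower-order terms, so the degree-$\dim\Sigma$ part agrees with that of $T_{y*}(j_!\mathcal V)$. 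For the smooth module $\mathcal V$, the normalization properties of the Brasselet--Sch\"urmann--Yokura transformation \cite{BSY} identify the top-degree part of the normalized class with the fiberwise $\chi_y$-genus, giving
\[ \bigl[\widehat T_{y*}(\varphi_h\bQ^H_{\widetilde Y})\bigr]_{\dim\Sigma}=\chi_y\!\left(i_\xi^*\varphi_h\bQ^H_{\widetilde Y}\right)\cdot[\Sigma],\]
the shift carried by $\varphi_h$ being absorbed into the generic stalk. Converting via $[T_{y*}(\,\cdot\,)]_k=(1+y)^k[\widehat T_{y*}(\,\cdot\,)]_k$ then yields
\[ \bigl[T_{y*}(\varphi_h\bQ^H_{\widetilde Y})\bigr]_{\dim\Sigma}=(1+y)^{\dim\Sigma_X+r}\,\chi_y\!\left(i_\xi^*\varphi_h\bQ^H_{\widetilde Y}\right)\cdot[\Sigma].\]

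To conclude \eqref{f12}, I would apply $s^*$: since $[\Sigma]=\pi^*[\Sigma_X]$ and $s^*=(\pi^*)^{-1}$, we have $s^*[\Sigma]=[\Sigma_X]$, so the leading term of $M_{y*}(X\subset Y)$ is $(1+y)^{\dim\Sigma_X+r}\chi_y(i_\xi^*\varphi_h\bQ^H_{\widetilde Y})[\Sigma_X]$. For \eqref{f12n} I would instead use the equivalent description $\widehat M_{y*}(X\subset Y)=s^*\bigl((1+y)^r\widehat T_{y*}(\varphi_h\bQ^H_{\widetilde Y})\bigr)$ from Definition \ref{hmci} together with the top-degree formula for $\widehat T_{y*}$ above, producing $(1+y)^r\chi_y(i_\xi^*\varphi_h\bQ^H_{\widetilde Y})[\Sigma_X]$. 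For the spectral classes I would run the identical argument with $T^{sp}_{t*}$ replacing $T_{y*}$: over a point the spectral transformation is the dual spectrum ${\rm Sp}'$ (Remark \ref{r39}), so the top-degree term becomes ${\rm Sp}'(X,x)\cdot[\Sigma]$ computed with the semisimple monodromy $T_s$, the normalization factors $(1+y)^\bullet$ turn into $(1-t)^\bullet$, and the substitution $t=-y$ (which forgets the action) recovers \eqref{f12} and \eqref{f12n}.

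The hard part will be the leading-term identification of the second step: verifying that $\varphi_h\bQ^H_{\widetilde Y}$ is generically a smooth mixed Hodge module on the irreducible variety $\Sigma$ with generic stalk $i_\xi^*\varphi_h\bQ^H_{\widetilde Y}$, and that the top homological-degree component of the normalized Hirzebruch transformation of such a module is exactly its generic $\chi_y$-genus times $[\Sigma]$. Everything else --- the support and irreducibility statements, the degree bookkeeping, the identity $s^*[\Sigma]=[\Sigma_X]$, and the normalization relations between $T_{y*}$, $\widehat T_{y*}$ and $T^{sp}_{t*}$ --- is formal.
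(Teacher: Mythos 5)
Your proposal is correct and follows essentially the same route as the paper: both reduce to showing that the top homological-degree part of $\widehat{T}_{y*}(\varphi_h \bQ^H_{\widetilde{Y}})$ on $\Sigma=\pi^{-1}(\Sigma_X)$ equals $\chi_y(i_\xi^*\varphi_h\bQ^H_{\widetilde{Y}})\cdot[\Sigma]$ via the generic-stalk argument of \cite[Example 3.9]{CMSS}, and then transport this through $s^*$ and the $(1+y)^k$ normalization factors (with the identical modifications for the spectral classes). The only difference is cosmetic: you spell out the open-closed decomposition $j_!j^*\to\mathrm{id}\to i_*i^*$ that the paper leaves implicit in its citation of \emph{loc.\ cit.}
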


\begin{proof}
In view of Definition \ref{hmci}, and  since $\varphi_h \bQ^H_{\widetilde{Y}}$ is supported on $\Sigma:=C_XY|_{\Sigma_X}=\pi^{-1}(\Sigma_X)$, with $\Sigma_X:=\Sing(X)$ the singular locus of $X$, 
it suffices to show that
\begin{equation}\label{f13}
\widehat{T}_{y*}(\varphi_h \bQ^H_{\widetilde{Y}})=\chi_y(i_\xi^* \varphi_h \bQ^H_{\widetilde{Y}}) \cdot [\Sigma] + {l.o.t.} \in H_{c+r}(\Sigma)[y] + \cdots,
\end{equation}
where $c=\dim(\Sigma_X)$. 
This follows just like in \cite[Example 3.9]{CMSS}, by noting that the top degree part of $\widehat{T}_{y*}(\varphi_h \bQ^H_{\widetilde{Y}})$ lies in 
\begin{equation}\label{f14}
H_{c+r}(\Sigma)\cong H_{c+r}(\Sigma_{\rm reg}) \cong H^0(\Sigma_{\rm reg}) \cong \bZ,\end{equation}
which is generated by the fundamental class $[\Sigma]$. Here
 $\Sigma_{\rm reg}=\pi^{-1}((\Sigma_X)_{\rm reg})$ is the smooth locus of $\Sigma$, and the second isomorphism in \eqref{f14} is just Poincar\'e duality. 
 (Recall that $H_{c+r}(\Sigma)=H_{top}(\Sigma)$ denotes
as before either the top Borel-Moore homology group or the top Chow group, and note that there is a group isomorphism $CH_{c+r}(\Sigma) \overset{\cong}{\to}  H^{BM}_{2(c+r)}(\Sigma)$.)
 To find the coefficient of $[\Sigma]$ in $\widehat{T}_{y*}(\varphi_h \bQ^H_{\widetilde{Y}})$, we proceed like in \cite[Example 3.9]{CMSS} by taking the stalk of the vanishing cycle complex at a general point $\xi \in \Sigma$. 
  
 Similar considerations apply when using  the spectral class transformation $\widehat{T}_{t*}^{sp}$, in which case the $\chi_y$-polynomial in \eqref{f12} gets replaced with ${\rm Sp'}(i_\xi^* \varphi_h \bQ^H_{\widetilde{Y}}) = {\rm Sp}'(X,x)$.
\end{proof}

\begin{remark}
In general, if $\Sigma_X^i$ is a $c$-dimensional irreducible component of $\Sigma_X=\Sing(X)$, one has canonical arrows
$$H_c(\Sigma_X^i) \to H_c(\Sigma_X) \to H_c((\Sigma_X^i)_{\rm reg})$$
with the first arrow injective. So the argument of the above proof can be applied to each irreducible component of $\Sigma_X$.
\end{remark}

As an immediate consequence of Proposition \ref{ptop}, we have the following.
\begin{corollary}\label{prsp} Assume that the codimension $r$ local complete intersection $X \subset Y$ has only one isolated singularity $\{x\}$. Then 
\begin{equation}\label{icischi}
M_{y*}(X \subset Y)=\widehat{M}_{y*}(X \subset Y)=(1+y)^{r} \cdot \chi_y(i_\xi^* \varphi_h \bQ^H_{\widetilde{Y}}) 
\end{equation}
for $\xi\in\pi^{-1}(x)$ a general point in the fiber of $\pi:C_XY \to X$ over $x$, 
and
\begin{equation}\label{icissp}
M^{sp}_{t\ast}(X\subset Y)= \widehat{M}^{sp}_{t*}(X \subset Y)=(1-t)^r \cdot {\rm Sp}'(X,x).
\end{equation}
\end{corollary}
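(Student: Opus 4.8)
The plan is to derive Corollary \ref{prsp} directly from Proposition \ref{ptop} by specializing to the case of an isolated singularity. When $\Sigma_X = \{x\}$ is a single point, we have $\dim(\Sigma_X) = 0$, so the ``lower order terms'' in formulae \eqref{f12} and \eqref{f12n} vanish identically: there is no homological degree strictly below $0$, hence the localized classes live entirely in $H_0(\{x\})[y] \cong \bQ[y]$ and the top-degree term is the \emph{only} term. Concretely, I would observe that $\Sigma = \pi^{-1}(\{x\}) \cong \bC^r$ is irreducible of dimension $r$, so $\dim(\Sigma_X) + r = r$ and $[\Sigma_X] = [\{x\}]$ is the generator of $H_0(\{x\}) \cong \bQ$. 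Substituting $\dim(\Sigma_X) = 0$ into \eqref{f12} immediately yields
\[ M_{y*}(X \subset Y) = (1+y)^{r} \cdot \chi_y(i_\xi^* \varphi_h \bQ^H_{\widetilde{Y}}), \]
which is \eqref{icischi}.

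Next I would verify that the normalized and un-normalized classes agree in this setting. From \eqref{f12n}, the normalized class $\widehat{M}_{y*}(X \subset Y)$ also equals $(1+y)^r \cdot \chi_y(i_\xi^* \varphi_h \bQ^H_{\widetilde{Y}})$ with no lower order terms, giving the equality $M_{y*}(X \subset Y) = \widehat{M}_{y*}(X \subset Y)$. This is the expected behaviour over a point: the normalization by the Adams operation $\Psi_k$ only affects strictly positive homological degrees through the factor $(1+y)^{-k}$, and since the entire class is concentrated in degree $0$ (before the Gysin shift is accounted for), normalization has no effect. One can also see this through the alternative description in Definition \ref{hmci}, where $\widehat{M}_{y*}$ is obtained by replacing $T_{y*}$ with $(1+y)^r \cdot \widehat{T}_{y*}$; the compatibility is already encoded in the matching powers of $(1+y)$ appearing in \eqref{f12} and \eqref{f12n}.

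For the spectral statement \eqref{icissp}, I would invoke the final sentence of Proposition \ref{ptop}, which asserts that the analogous formulae hold for $M^{sp}_{t*}(X \subset Y)$ and $\widehat{M}^{sp}_{t*}(X \subset Y)$ upon substituting $t = -y$ and replacing the $\chi_y$-polynomial by the reduced dual spectrum ${\rm Sp}'(X,x) = {\rm Sp}'(i_\xi^* \varphi_h \bQ^H_{\widetilde{Y}})$. Carrying out this substitution in the isolated-singularity specialization of \eqref{f12n}, the factor $(1+y)^r$ becomes $(1-t)^r$, and the result is exactly
\[ M^{sp}_{t*}(X \subset Y) = \widehat{M}^{sp}_{t*}(X \subset Y) = (1-t)^r \cdot {\rm Sp}'(X,x), \]
which is \eqref{icissp}. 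I expect no genuine obstacle here, as the corollary is an essentially immediate reading-off of Proposition \ref{ptop} once $\dim(\Sigma_X) = 0$ is imposed. The only point requiring a small amount of care is the bookkeeping of the Gysin shift in the definition of $s^*$ and confirming that the sole surviving homological term is the degree-zero one; but since $\Sigma_X$ is a reduced point this is automatic, and the identification $H_0(\{x\}) \cong \bQ$ makes the fundamental class $[\Sigma_X]$ the multiplicative identity so that it may be suppressed from the notation.
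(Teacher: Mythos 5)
Your proposal is correct and follows exactly the paper's route: the paper states this corollary as an immediate consequence of Proposition \ref{ptop}, obtained by setting $\dim(\Sigma_X)=0$ so that the lower order terms vanish and only the top-degree coefficient survives in $H_0(\{x\})\cong \bQ$. Your additional bookkeeping (irreducibility of the point, agreement of normalized and un-normalized classes in degree zero, and the $t=-y$ substitution for the spectral version) is accurate and consistent with what the paper leaves implicit.
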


The first implication in the next proposition is an immediate consequence of Definition \ref{hmci} and Remark \ref{supp2}.

\begin{proposition}
Assume the local complete intersection $X \subset Y$ is smooth. Then $$M_{y*}(X \subset Y)=0 \ \ \  \text{and} \ \ \ \widehat{M}_{y*}(X \subset Y)=0,$$
and similarly, the spectral classes vanish as well. 

Conversely, if the lci $X \subseteq Y$ is singular with singular locus $\Sigma_X$, then the coefficient of the fundamental class $[\Sigma_X]$ in either of the classes $M_{y*}(X \subset Y), \, M^{sp}_{t\ast}(X\subset Y)$ (or their normalized versions) is non-zero. \end{proposition}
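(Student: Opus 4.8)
The plan is to handle the two implications separately, letting Proposition~\ref{ptop} do the work for the converse. The smooth case is immediate, exactly as indicated before the statement: when $X$ is smooth the total space $\widetilde{Y}$ is smooth and $h$ is a smooth morphism near its zero fibre, so Remark~\ref{supp} gives $\varphi_h\bQ^H_{\widetilde{Y}}=0$. Since each of the classes $M_{y*}(X\subset Y)$, $\widehat{M}_{y*}(X\subset Y)$, $M^{sp}_{t*}(X\subset Y)$ and $\widehat{M}^{sp}_{t*}(X\subset Y)$ is produced from $\varphi_h\bQ^H_{\widetilde{Y}}$ by applying an additive characteristic-class transformation followed by the Gysin map $s^*$ (Definition~\ref{hmci}), all of them vanish.

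For the converse, I would fix a top-dimensional irreducible component $\Sigma_X^i$ of $\Sigma_X$ of dimension $c$, a general point $x\in\Sigma_X^i$, and a general point $\xi\in\pi^{-1}(x)\cong\bC^r$ of the fibre. By Proposition~\ref{ptop} together with the remark following it (which treats each irreducible component of $\Sigma_X$ separately), the coefficient of $[\Sigma_X^i]$ in the top homological degree of $M_{y*}(X\subset Y)$ equals
\[ (1+y)^{\,c+r}\cdot\chi_y\!\left(i_\xi^*\varphi_h\bQ^H_{\widetilde{Y}}\right), \]
while the corresponding coefficient in $M^{sp}_{t*}(X\subset Y)$ is obtained by substituting $t=-y$ and replacing $\chi_y$ by the reduced dual spectrum ${\rm Sp}'(X,x)$; for the normalized classes only the power of $(1+y)$ (resp.\ $(1-t)$) changes. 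Thus it suffices to show that $\chi_y(i_\xi^*\varphi_h\bQ^H_{\widetilde{Y}})$ and ${\rm Sp}'(X,x)$ are nonzero.

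The key structural input is that $\varphi_h\bQ^H_{\widetilde{Y}}$ is, up to a global shift, a single mixed Hodge module (Remark following \eqref{f2}), so that its underlying constructible complex is a shifted perverse sheaf. Since $\xi$ is a general point of the top-dimensional part $\Sigma$ of its support, on a dense open subset of $\Sigma$ this perverse sheaf is a shifted local system, so the stalk $i_\xi^*\varphi_h\bQ^H_{\widetilde{Y}}$ is concentrated in a single cohomological degree. In particular there is no alternating cancellation: $\chi_y$ of this stalk is, up to sign, the $\chi_y$-polynomial of a single mixed Hodge structure, and ${\rm Sp}'(X,x)$ is, up to sign, the dual spectrum of that same mixed Hodge structure with its monodromy action (cf.\ Remark~\ref{r39}). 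As the dimensions of the Hodge pieces are nonnegative, both invariants are nonzero as soon as the stalk is nonzero (for instance $\chi_{-1}$ equals $\pm\dim_\bC$ of the stalk). The problem therefore reduces to showing that a general $\xi\in\Sigma$ lies in the support of $\varphi_h\bQ^H_{\widetilde{Y}}$.

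This last reduction is the main obstacle, since Remark~\ref{supp} only gives the inclusion $\supp\varphi_h\bQ^H_{\widetilde{Y}}\subseteq\Sigma$. I would establish density of the support by a generic transverse slicing argument: because $C_XY$ is a rank $r$ vector bundle over $X$, near $\xi$ it is analytically a product of the germ $(X,x)$ with a smooth factor, and cutting $Y$ by a general smooth slice through $x$ transverse to $\Sigma_X^i$ yields an lci of codimension $r$ with an isolated singularity at $x$ (note $c<\dim X$, since the reduced lci $X$ has singular locus of positive codimension, so the slice has positive dimension). By compatibility of vanishing cycles with generic transverse slices, the stalk $i_\xi^*\varphi_h\bQ^H_{\widetilde{Y}}$ computes the vanishing cohomology of this isolated complete intersection singularity, which is nonzero precisely because the singularity is genuine (the Milnor fibre of a non-smooth isolated complete intersection singularity has nonvanishing reduced cohomology). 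This gives the required nonvanishing of the stalk, hence of $\chi_y(i_\xi^*\varphi_h\bQ^H_{\widetilde{Y}})$ and of ${\rm Sp}'(X,x)$, and therefore of the coefficient of $[\Sigma_X]$, completing the converse.
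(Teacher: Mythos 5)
Your proposal is correct and follows essentially the same route as the paper's own proof: reduce via Proposition~\ref{ptop} to non-vanishing of the stalk invariant at a general point over a general point of $\Sigma_X$, pass to a generic normal slice to reduce to an isolated complete intersection singularity, and conclude from the non-vanishing of its reduced Milnor cohomology (for which the paper cites \cite[Theorem 6.3.10]{Gaf}, and it cites \cite[Theorem 2(ii)]{DMS} for the compatibility with normal slices that you invoke as ``compatibility of vanishing cycles with generic transverse slices''---you should cite something at that step, since the identification of $i_\xi^*\varphi_h\bQ^H_{\widetilde Y}$ with Milnor-fibre cohomology of the ICIS is not formal). Your explicit perversity argument ruling out cancellation in $\chi_y$ and ${\rm Sp}'$ is just a different packaging of the paper's implicit use of the concentration of the reduced Milnor cohomology of an ICIS in a single degree; the substance of the two arguments is the same.
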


\begin{proof}
Since by Remark \ref{supp2} we know that $M_{y*}(X \subset Y) \in H_*(\Sigma_X)[y]$, with $\Sigma_X:=\Sing(X)$, the assertion is immediate. The vanishing of $M^{sp}_{t\ast}(X\subset Y)$ and of the normalized (spectral) classes follows similarly.

We prove that the coefficient of $[\Sigma_X]$ in the class $M^{sp}_{t\ast}(X\subset Y)$ is non-zero, the rest can be shown in a similar fashion or deduced from this claim. By Proposition \ref{ptop}, the coefficient of $[\Sigma_X]$ in $M^{sp}_{t\ast}(X\subset Y)$ is, up to a power of $(1-t)$, given by $ {\rm Sp}'(X,x)$, where $x\in \Sigma_X$ is a general point. Hence the non-vanishing of the coefficient of $[\Sigma_X]$ in $M^{sp}_{t\ast}(X\subset Y)$ is equivalent to ${\rm Sp'}(X,x) \neq 0$, which is equivalent to ${\rm Sp}(X,x) \neq 0$ by Remark \ref{rmk-compareSpectra}.

By \cite[Theorem 2(ii)]{DMS}, this non-vanishing implies the same is true if we take a normal slice to $\Sigma_X$ through $x$. As $x$ is a general point of $\Sigma$, the resulting lci subvariety (in the normal slice) has an isolated singularity at $x$. But the spectrum of an ICIS singularity is always non-zero. Indeed, that spectrum agrees with the Hodge spectrum of the monodromy on the reduced Milnor cohomology, and so the vanishing of the spectrum is equivalent to the vanishing of the reduced Milnor cohomology, i.e., for the Milnor number to be zero. By \cite[Theorem 6.3.10]{Gaf}, the Milnor number is zero if and only if the point is smooth, a contradiction.
\end{proof}

\medskip

Let us next unravel Definition \ref{hmci} in the case when $r=1$, i.e., relate the Hirzebruch-Milnor classes $M_{y*}(X \subset Y)$, $\widehat{M}_{y*}(X \subset Y)$ defined via the deformation to the normal cone  to the Hirzebruch-Milnor classes of the corresponding hypersurface, as introduced in Definition \ref{hms}.
\begin{proposition}\label{pr13}
If $X=\{f=0\}$ with $f\in \cO_Y(Y)$, we have the following identities:
\begin{equation}\label{eq51}
M_{y*}(X \subset Y)=(1+y) \cdot M_{y*}(X)
\end{equation}
and 
\begin{equation}\label{eq51n}
\widehat{M}_{y*}(X \subset Y)=(1+y) \cdot \widehat{M}_{y*}(X).
\end{equation}
Similarly, the spectral classes satisfy
\begin{equation}\label{eq51s}
M^{sp}_{t\ast}(X\subset Y)=(1-t) \cdot M^{sp}_{t\ast}(X).
\end{equation}
and 
\begin{equation}\label{eq51sn}
\widehat{M}^{sp}_{t\ast}(X\subset Y)=(1-t) \cdot \widehat{M}^{sp}_{t\ast}(X).
\end{equation}
\end{proposition}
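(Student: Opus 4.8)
The plan is to prove the un-normalized identity \eqref{eq51} first and deduce the other three by the same mechanism. The whole argument rests on two features special to $r=1$. First, $X=\{f=0\}$ is a \emph{principal} divisor, so its normal cone $C_XY=N_{X/Y}=\cO_Y(X)\vert_X$ is the \emph{trivial} line bundle $C_XY\cong X\times\bC$ (the global equation $f$ trivializes $\cO_Y(X)$), and in particular $c_1(N_{X/Y})=0$. Second, under the projection $\pi\colon C_XY\to X$, the vanishing cycle module computing $M_{y*}(X\subset Y)$ is the pullback of the one computing $M_{y*}(X)$. Granting these, the factor $(1+y)$ emerges formally from a Verdier--Riemann--Roch computation together with the Gysin relation $s^*\pi^*=\mathrm{id}$.

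The first task is to identify $\varphi_h\bQ^H_{\widetilde Y}\cong\pi^*\varphi_f\bQ^H_Y$ as filtered $\cD$-modules (hence as mixed Hodge modules) on $C_XY=X\times\bC_z$, compatibly with the decomposition into unipotent and non-unipotent parts. I would read this off the description of ${\rm Sp}_X(\bQ^H_Y[\dim Y])$ recalled in Section \ref{defc}: for $r=1$ its underlying $\cD$-module is $\bigoplus_{\chi}{\rm Gr}_V^\chi(\cB_f)$, with $z$ acting as $t$ and $\partial_z$ as $\partial_t$, and $\varphi_h\bQ^H_{\widetilde Y}$ is its quotient by the image of the inclusion $sp$, i.e.\ by $\cK_f[z]$. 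For a single function $f$ the pieces ${\rm Gr}_V^\lambda(\cB_f)$ with $\lambda\in(0,1)$ underlie $\varphi_{f,\lambda}\bQ^H_Y$, and multiplication by $t$ (resp.\ $\partial_t$) identifies ${\rm Gr}_V^{\lambda+\ell}(\cB_f)\cong{\rm Gr}_V^\lambda(\cB_f)$ for all $\ell\in\bZ$; thus $\varphi_{h,\lambda}\bQ^H_{\widetilde Y}=\bigoplus_\ell{\rm Gr}_V^{\lambda+\ell}(\cB_f)\cong(\varphi_{f,\lambda}\bQ^H_Y)[z]=\pi^*\varphi_{f,\lambda}\bQ^H_Y$, which is precisely the filtered isomorphism \eqref{eq-nonUnipQ}. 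For the unipotent part one argues similarly with the integer-indexed pieces, using ${\rm Gr}_V^0(\cB_f)\cong{\rm Gr}_V^1(\cB_f)/\cK_f$ via $\mathrm{can}$ to obtain $\varphi_{h,1}\bQ^H_{\widetilde Y}\cong\pi^*(\varphi_{f,1}\bQ^H_Y)$. Globalizing these local isomorphisms over $\Sigma_X$ is done exactly as in Lemma \ref{lem-globalization}.

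With the identification in hand, the computation is formal. Since $\pi$ is smooth of relative dimension one and $T_\pi=\pi^*C_XY$ is \emph{trivial}, the Verdier--Riemann--Roch formula for the motivic Chern class gives $\DR_y(\pi^*\cG)=\lambda_y(\Omega^1_\pi)\cdot\pi^*\DR_y(\cG)=(1+y)\,\pi^*\DR_y(\cG)$, because for the trivial relative cotangent line one has $\lambda_y(\Omega^1_\pi)=1+y[\cO]$. Moreover $\td_*$ commutes with the flat pullback $\pi^*$ up to capping with $\td(T_\pi)=1$, so $T_{y*}(\pi^*\cG)=(1+y)\,\pi^*T_{y*}(\cG)$. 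Applying the Gysin map and using $s^*\pi^*=\mathrm{id}$ on $H_*$ yields $s^*T_{y*}(\pi^*\cG)=(1+y)\,T_{y*}(\cG)$; taking $\cG=\varphi_f\bQ^H_Y$ gives \eqref{eq51}. It is exactly the triviality of $N_{X/Y}$ that removes the higher Chern-class corrections $\tfrac{1-y}{2}c_1(N_{X/Y})+\cdots$ which would otherwise appear, leaving the clean factor $(1+y)$. The normalized identity \eqref{eq51n} is then immediate from the alternative description $\widehat M_{y*}(X\subset Y)=(1+y)^{r}\,s^*\widehat T_{y*}(\varphi_h\bQ^H_{\widetilde Y})$ in Definition \ref{hmci}, since the normalized class of the trivial bundle is $1$ forces $\widehat T_{y*}(\pi^*\cG)=\pi^*\widehat T_{y*}(\cG)$ and hence $s^*\widehat T_{y*}(\pi^*\cG)=\widehat T_{y*}(\cG)$.

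The spectral identities \eqref{eq51s} and \eqref{eq51sn} follow by repeating the previous step with $T^{sp}_{t*}$ in place of $T_{y*}$: the pullback module $\pi^*\varphi_f\bQ^H_Y$ carries the pulled-back semisimple monodromy $T_s$, and the trivial relative cotangent line (with trivial monodromy, $\ell(\lambda)=0$) now contributes the factor $(1-t)$ rather than $(1+y)$, consistently with the substitution $t=-y$ of Remark \ref{r39} and with the isolated-singularity formulas \eqref{icischi}--\eqref{icissp}. I expect the main obstacle to be the identification of the previous paragraph: establishing the filtered $\cD$-module isomorphism $\varphi_h\bQ^H_{\widetilde Y}\cong\pi^*\varphi_f\bQ^H_Y$ globally and compatibly with the monodromy decomposition---in particular matching the Hodge filtrations through the quotient by $\cK_f$ in the unipotent part, and gluing over $\Sigma_X$. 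Once this is secured, the passage to characteristic classes is a routine Verdier--Riemann--Roch argument rendered trivial by the vanishing of $c_1(N_{X/Y})$.
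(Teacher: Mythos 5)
Your final formulas are the right ones, but the proof rests on an identification that is false: $\varphi_h\bQ^H_{\widetilde Y}$ is \emph{not} isomorphic to $\pi^*\varphi_f\bQ^H_Y$, not even as a constructible complex, let alone as a filtered $\cD$-module. Three independent symptoms: (i) for $\lambda\in(0,1)$ the restriction of $\varphi_{h,\lambda}\bQ^H_{\widetilde Y}$ to a fiber $\{x\}\times\bC^*$ of $\pi$ is a local system with monodromy $\exp(\pm 2\pi i\lambda)\neq 1$ (locally $h=f/v$ differs from $f$ by the unit $v$, and the identification of $\varphi_{f/v}$ with $\varphi_f$ twists by the Milnor monodromy as $v$ winds around $0$), so it cannot be pulled back from $X$; (ii) for the unipotent part, $s_0^*\varphi_{h,1}(\bQ^H_{\widetilde Y}[\dim\widetilde Y-1])=0$ by \eqref{zero} and \eqref{f3} (this vanishing is used explicitly in the proof of Corollary \ref{cor-lowestHodge}), whereas $s_0^*\pi^*\varphi_{f,1}\bQ^H_Y=\varphi_{f,1}\bQ^H_Y\neq 0$ when $X$ is singular; (iii) at the level of \eqref{eq-nonUnipQ}, the summands ${\rm Gr}_V^{\lambda+\ell}(\cB_f)$ with $\ell<0$ are identified with ${\rm Gr}_V^\lambda(\cB_f)$ only via powers of $\partial_t$, which \emph{shift} the Hodge filtration, so $\bigoplus_{\ell\in\bZ}{\rm Gr}_V^{\lambda+\ell}(\cB_f,F)$ is not ${\rm Gr}_V^\lambda(\cB_f,F)[z]$; as an $\cO$-module it is of Kummer type $\bigoplus_{\ell\in\bZ}G\,z^\ell$ rather than $G[z]$, and its $F_p$ mixes $F_{p-m}{\rm Gr}_V^\lambda$ for all $m\geq 0$. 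Since your entire derivation of the factor $(1+y)$ is the Verdier--Riemann--Roch identity $T_{y*}(\pi^*\cG)=(1+y)\,\pi^* T_{y*}(\cG)$ applied to a module that is not a pullback, the argument does not go through; note also that Corollary \ref{cor-lowestHodge} only identifies the single \emph{lowest} nonvanishing ${\rm Gr}^F{\rm DR}$ piece as a $\pi^*$, precisely because the higher pieces are not pullbacks.

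Your instinct that the $(1+y)$ should come from the one-dimensional fiber direction is correct, but the mechanism has to be specialization to a transverse slice rather than pullback. The paper's proof uses the section $s_1$ at $z=1$ (not $s_0$): by Verdier's (Sp6) one has $s_1^*\varphi_h\bQ^H_{\widetilde Y}\simeq\varphi_f\bQ^H_Y$ compatibly with monodromy, the Gysin maps $s_0^*$ and $s_1^*$ on homology agree since both are sections of the trivial bundle $\pi$, and then Sch\"urmann's specialization theorem for $T_{y*}$ along $g=z-1$ gives $s_1^*T_{y*}(M)=(1+y)\,T_{y*}(\psi_g M)$; finally $\varphi_g(M)=0$ for $M$ monodromic (a monodromic sheaf is locally constant along the fibers near $z=1$), so $\psi_g M\simeq s_1^*M$. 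If you want to keep your structure, you must replace the pullback claim by this restriction-to-$s_1$ statement and replace the VRR computation by the specialization formula for $T_{y*}$ applied to the monodromic module $\varphi_h\bQ^H_{\widetilde Y}$ along $z=1$.
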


\begin{proof} 
If $X=\{f=0\}$, then $$\pi \times C(f):C_XY \to X \times \bC$$ is an isomorphism, and let $s_1:X \to C_XY$ be the section corresponding to $id \times \{1\}$ under this isomorphism, i.e., the constant section $z=1$, with $z$ the fiber coordinate. Then, by \cite[(Sp6)]{V}, \begin{equation}s_1^* \circ {\rm Sp}_X=\psi_f,\end{equation} and by applying $s_1^*$ to \eqref{f2} we get that  \begin{equation}\label{eq50} s_1^* (\varphi_h \bQ^H_{\widetilde{Y}})\simeq \varphi_f \bQ^H_Y.\end{equation}
Moreover, using the discussion in Remark \ref{rmkVerdier}, $s_1^*$  identifies the monodromy automorphisms of $\varphi_h$ and $\varphi_f$.
More precisely, for any $\mu \in (0,1]$, we have the identification (which can also be verified for underlying $\cD$-modules):
 \begin{equation}\label{eq50monodromy} s_1^* (\varphi_{h,\mu} \bQ^H_{\widetilde{Y}})\simeq \varphi_{f,\mu} \bQ^H_Y.\end{equation}

Since $s=s_0$ and $s_1$ are sections of the same (trivial) line bundle $\pi$, they induce the same Gysin morphism, so
\begin{equation}\label{f60u} M_{y*}(X \subset Y):=s^* T_{y*}(\varphi_h \bQ^H_{\widetilde{Y}}) = s_1^* T_{y*}(\varphi_h \bQ^H_{\widetilde{Y}}) .\end{equation}
On the other hand, by \eqref{eq50}, we get
\[ (1+y) \cdot M_{y*}(X)= (1+y) \cdot T_{y*} (s_1^* (\varphi_h \bQ^H_{\widetilde{Y}})) .\]
Hence, in order to prove formula \eqref{eq51}, it suffices to show that for a monodromic mixed Hodge module $M$ on  $C_XY$, one has the following identity
\begin{equation}\label{mon}
s_1^*T_{y*}(M)=(1+y) \cdot T_{y*}(s_1^*M).
\end{equation}

Consider the function $g=z-1:C_XY=X\times \bC \to \bC$, with $z$ the fiber coordinate. Then $g^{-1}(0)$ is identified with $X$ under the inclusion given by the section $s_1$. Sch\"urmann's specialization result for the transformation $\DR_y$ (see \cite{Sp}, or \cite[Section 4]{MS}), coupled with Verdier's specialization result for the Todd class transformation $\td_*$ (cf. \cite{V0,Fu}), yields that
\begin{equation}\label{spH}
s_1^*T_{y*}(M)=(1+y) \cdot T_{y*}(\psi_g(M))
\end{equation}
Thus to prove \eqref{mon} it suffices to show that $s_1^*M \simeq \psi_g(M)$ or, equivalently, that $\varphi_g(M)=0$.

The last claim can be checked on the underlying analytically constructible sheaves, and here we can work on a trivial open disc bundle $X \times D$ around $s_1(X)$ contained in the complement $C_XY\setminus s(X)$ of the zero section. Then a monodromic sheaf $\cF$ on $C_XY$ is (locally) constant on the fibers of the projection $p: X\times D\to X$ so that it is of type $p^*\cG$, from which one gets that $\varphi_{g}\cF=0$.

For the spectral class version, note that Sch\"{u}rmann's result holds more generally for strictly specializable filtered $\cD$-modules. Hence, for any $\mu \in (0,1]$, it applies for the filtered direct summand $\varphi_{h,\mu}(\bQ_{\widetilde{Y}}^H[\dim(\widetilde{Y})-1])$ and we have
\[ s_1^*T_{y*}(\varphi_{h,\mu}(\bQ_{\widetilde{Y}}^H[\dim(\widetilde{Y})-1]))=(1+y) \cdot T_{y*}(\psi_g(\varphi_{h,\mu}(\bQ_{\widetilde{Y}}^H[\dim(\widetilde{Y})-1]))),\]
but then using that $\varphi_{h,\mu}(\bQ_{\widetilde{Y}}^H[\dim(\widetilde{Y})-1])$ is (locally) monodromic, we get that the right hand side is equal to $(1+y) T_{y*}(s_1^*(\varphi_{h,\mu}(\bQ_{\widetilde{Y}}^H[\dim(\widetilde{Y})-1])))$. Evaluating at $y = -t$ and using \eqref{eq50monodromy}, we have the equality
\[s_1^*T_{(-t)*}(\varphi_{h,\mu}(\bQ_{\widetilde{Y}}^H[\dim(\widetilde{Y})-1])) = (1-t) T_{(-t)*}(s_1^*(\varphi_{h,\mu}(\bQ_{\widetilde{Y}}^H[\dim(\widetilde{Y})-1])))\]
\[ = (1-t) T_{(-t)*}(\varphi_{f,\mu}(\bQ_Y^H[\dim Y])).\]
Multiplying by $t^{1-\mu}$ for $\mu \in (0,1)$ (because $\varphi_{*,\mu}$ corresponds to eigenvalue $\exp(-2\pi i \mu)$ for $* = h, f$) or by $t^0$ if $\mu =1$, and summing over all $\mu \in (0,1]$, we get the desired equality \eqref{eq51s}.

To prove formula \eqref{eq51n}, we proceed similarly, the only changes appearing in formula \eqref{f60u}, which gets replaced by
\begin{equation}\label{f60n} \widehat{M}_{y*}(X \subset Y):=(1+y)\cdot  s_1^* \widehat{T}_{y*}(\varphi_h \bQ^H_{\widetilde{Y}}) ,\end{equation}
and also formula \eqref{spH} gets replaced by (see \cite{Sp}, or \cite[Section 4]{MS})
\begin{equation}\label{spHn}
s_1^*\widehat{T}_{y*}(M)=\widehat{T}_{y*}(\psi_g(M)).
\end{equation}
Formula \eqref{eq51sn} follows similarly.
\end{proof}

\begin{remark}
In view of Corollary \ref{prsp} and Example \ref{ex311}, Proposition \ref{pr13} provides a characteristic class (and higher homological) generalization of the fact that, if the hypersurface $X=\{f=0\} \subset Y$ has only an isolated singularity, then the two notions of Hodge spectrum ${\rm Sp'}(X,x)$ (one defined as in Example \ref{ex311} via the vanishing cycles of $f$, and the other defined as in \cite{DMS} via the deformation to the normal cone) agree (cf. also Remark \ref{rmk-compareSpectra}). 
\end{remark}


\subsection{Spectral classes, minimal exponent, and Hodge spectrum}\label{sec: spectralclasses} 
In this section we relate the spectral classes of a local complete intersection $X$ to the minimal exponent  $\widetilde{\alpha}(X)$. As a consequence, we discuss results about the Hodge spectrum. For simplicity, we formulate our results in terms of the un-normalized classes.

Recall that for $\alpha \in \bQ$, we denote the coefficient of $t^\alpha$ in the spectral Hirzebruch-Milnor class $M^{sp}_{t\ast}(X\subset Y)$ by 
 $M^{sp}_{t\ast}(X\subset Y)|_{t^{\alpha}}\in H_{\ast}(\Sing(X))$. 
 
 The main result of this section is the following.

\begin{thm} \label{thm-mainHighCoefficients}
Let $X\subset Y$ be a codimension $r$ local complete intersection in a smooth complex algebraic variety $Y$.

If $\widetilde{\alpha}(X) \geq r+q+\mu$ for some $q \in \bZ_{\geq -1}$ and $\mu \in (0,1]$, then
\begin{equation} M^{sp}_{t\ast}(X\subset Y)\vert_{t^{\alpha}}  = 0 \text{ for all } \alpha > \dim Y - q - \mu.\end{equation}

If ${\rm Sing}(X)$ is projective and ${\rm lct}(X) > r-1$, then the converse holds.
\end{thm}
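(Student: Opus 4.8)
The plan is to compute each coefficient $M^{sp}_{t\ast}(X\subset Y)\vert_{t^{\alpha}}$ explicitly as the image under $s^{\ast}\td_{\ast}$ of a single graded de Rham piece, and then to extract the vanishing from Theorem \ref{tLCIQ} and the nonvanishing from Corollary \ref{cor-lowestHodge}. Writing $\cN_{\beta}:=\varphi_{h,\beta}(\bQ^H_{\widetilde{Y}}[\dim(\widetilde{Y})-1])$ for $\beta\in(0,1]$ and unwinding Definition \ref{hmci} together with $T^{sp}_{t\ast}=\td_{\ast}\circ\DR_{t}$, I get
\[ M^{sp}_{t\ast}(X\subset Y)=s^{\ast}\td_{\ast}\Big(\sum_{p\in\bZ,\ \beta\in(0,1]}\big[{\rm Gr}^{F}_{-p}\DR\,\cN_{\beta}\big]\,t^{\,p+\ell(\beta)}\Big), \]
where $\ell(\beta)=1-\beta$ for $\beta\in(0,1)$ and $\ell(1)=0$. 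Because $p$ is an integer and $\beta\in(0,1]$, for each fixed $\alpha$ there is exactly one pair $(p,\beta)$ with $p+\ell(\beta)=\alpha$ — namely $\beta$ is the representative of $-\alpha$ modulo $\bZ$ lying in $(0,1]$, and $p=\alpha-\ell(\beta)$. Hence
\[ M^{sp}_{t\ast}(X\subset Y)\vert_{t^{\alpha}}=s^{\ast}\td_{\ast}\big(\big[{\rm Gr}^{F}_{-p}\DR\,\cN_{\beta}\big]\big),\]
where the graded pieces are the globally defined coherent sheaves furnished by Lemma \ref{lem-globalization}.

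For the direct implication I would feed Theorem \ref{tLCIQ} into this one-term formula. That theorem yields ${\rm Gr}^{F}_{-p}\DR\,\cN_{\beta}=0$ whenever $(\dim Y-p)+\beta<\widetilde{\alpha}(X)-(r-1)$. Substituting $\alpha=p+\ell(\beta)$ gives $(\dim Y-p)+\beta=\dim Y-\alpha+1$ in both cases $\beta\in(0,1)$ and $\beta=1$, so the term can be nonzero only if $\alpha\le \dim Y+r-\widetilde{\alpha}(X)$. The hypothesis $\widetilde{\alpha}(X)\ge r+q+\mu$ (with $q\ge-1$, $\mu\in(0,1]$) forces $\widetilde{\alpha}(X)>r-1$, hence ${\rm lct}(X)=\min\{r,\widetilde{\alpha}(X)\}>r-1$ and Theorem \ref{tLCIQ} applies; moreover $\dim Y+r-\widetilde{\alpha}(X)\le \dim Y-q-\mu$, so every $\alpha>\dim Y-q-\mu$ has vanishing coefficient.

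For the converse, assuming ${\rm Sing}(X)$ projective and ${\rm lct}(X)>r-1$, I argue by contraposition. If $\widetilde{\alpha}(X)<r+q+\mu$ then $\widetilde{\alpha}(X)$ is finite and, by ${\rm lct}(X)>r-1$, satisfies $\widetilde{\alpha}(X)>r-1$, so I may write $\widetilde{\alpha}(X)=r+q'+\mu'$ with $q'\in\bZ_{\ge-1}$, $\mu'\in(0,1]$ and $r+q'+\mu'<r+q+\mu$. Set $\alpha_{\max}:=\dim Y+r-\widetilde{\alpha}(X)=\dim Y-q'-\mu'>\dim Y-q-\mu$. By the one-term formula and the computation above, the unique pair contributing at $\alpha_{\max}$ is $\beta=\mu'$ and $-p=q'+1-\dim Y$, i.e.\ the lowest nonvanishing de Rham index. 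Corollary \ref{cor-lowestHodge} then identifies this piece as $\pi^{\ast}\cG$ for a nonzero coherent sheaf $\cG$ on $\Sigma_X={\rm Sing}(X)$ (equal to ${\rm Gr}^{F}_{q'+1-\dim Y}{\rm Gr}_V^{r-1+\mu'}(\cB_{X,Y})$ if $\mu'\in(0,1)$, or ${\rm Gr}^{F}_{q'+1-\dim Y}\cQ_X$ if $\mu'=1$), so that $M^{sp}_{t\ast}(X\subset Y)\vert_{t^{\alpha_{\max}}}=s^{\ast}\td_{\ast}(\pi^{\ast}\cG)$.

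The hard part is the final nonvanishing of $s^{\ast}\td_{\ast}(\pi^{\ast}\cG)$, and this is exactly where projectivity is indispensable. By Verdier--Riemann--Roch for the bundle projection $\pi\colon\Sigma\to\Sigma_X$ and its zero section $s$ (cf.\ \cite{BFM,Fu}), one has $s^{\ast}\td_{\ast}(\pi^{\ast}\cG)=\td(E)\cap\td_{\ast}(\cG)$ with $E=C_XY\vert_{\Sigma_X}$; as $\td(E)$ is a unit, this vanishes iff $\td_{\ast}(\cG)=0$. Since $\Sigma_X$ is projective, I would fix an ample line bundle $L$ and observe that $n\mapsto \chi(\Sigma_X,\cG\otimes L^{\otimes n})=\deg\big({\rm ch}(L^{\otimes n})\cap\td_{\ast}(\cG)\big)$ is the Hilbert polynomial of $\cG$, which is a nonzero polynomial because $\cG\neq0$; this forces $\td_{\ast}(\cG)\neq0$ and hence $M^{sp}_{t\ast}(X\subset Y)\vert_{t^{\alpha_{\max}}}\neq0$. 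The essential subtlety — and the reason compactness cannot be dropped — is that the naive top-dimensional part of $\td_{\ast}(\cG)$ is the fundamental class of ${\rm supp}(\cG)$, which may vanish in the Borel--Moore homology or Chow group of a non-proper singular locus (e.g.\ $H_0^{BM}$ and $CH_0$ of affine space vanish); only the pairing against an ample class converts the sheaf-level nonvanishing $\cG\neq0$ into a homological one.
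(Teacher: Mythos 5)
Your proof is correct and follows essentially the same route as the paper's: the one-term identification $M^{sp}_{t\ast}(X\subset Y)\vert_{t^{\alpha}} = s^{\ast}\td_{\ast}\big[{\rm Gr}^F_{-p}{\rm DR}\,\varphi_{h,\beta}\big]$, Theorem \ref{tLCIQ} for the forward vanishing, and Corollary \ref{cor-lowestHodge} together with projectivity of $\Sing(X)$ for the converse. The only difference is in the endgame, and it is cosmetic: where the paper invokes \cite[Proposition 1]{MSY23} together with the isomorphisms $\pi^{\ast}\colon K_0(\Sigma_X)_{\bQ}\to K_0(\Sigma)_{\bQ}$ and $\td_{\ast}\colon K_0(-)_{\bQ}\to CH_{\ast}(-)_{\bQ}$, you reprove the needed nonvanishing of $s^{\ast}\td_{\ast}(\pi^{\ast}\cG)$ via Verdier--Riemann--Roch and the Hilbert polynomial of $\cG$, which is the same underlying argument.
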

\begin{proof} By Theorem \ref{tLCIHighSings} and Theorem \ref{tLCIQ}, we know that $\widetilde{\alpha}(X) \geq r+q+\mu$ if and only if \begin{equation}\label{f30}
\begin{cases} 
{\gr}^F_{p - \dim Y} {\rm DR}(\varphi_{h,\lambda}(\mathbf Q^H_{\widetilde{Y}}[\dim(\widetilde{Y})-1])) = 0 \text{ for all } \lambda \in (0,\mu), \quad p \leq q +1, \\
 {\gr}^F_{p - \dim Y} {\rm DR}(\varphi_{h,\lambda}(\mathbf Q^H_{\widetilde{Y}}[\dim(\widetilde{Y})-1])) = 0 \text{ for all } \lambda \in [\mu,1], \quad p \leq q
\end{cases}
\end{equation}
Recall now that for an integer $j \in \bZ$ and $\beta \in [0,1)$, we have the equality
\[ M^{sp}_{t\ast}(X\subset Y)\vert_{t^{j+\beta}} = s^* \td_*({\rm Gr}^F_{-j}{\rm DR}(\varphi_{h,\exp(2\pi i \beta)}(\mathbf Q^H_{\widetilde{Y}}))) . \]
Thus, we have the equality (using that $\varphi_{h,\lambda}$ corresponds to eigenvalue $\exp(-2\pi i \lambda)$):
\[ s^* \td_*({\rm Gr}^F_{p-\dim Y}{\rm DR}(\varphi_{h,\lambda}(\mathbf Q^H_{\widetilde{Y}}))) = \begin{cases} M^{sp}_{t\ast}(X\subset Y)\vert_{t^{\dim Y - p + (1-\lambda)}} & \lambda \in (0,1) \\ M^{sp}_{t\ast}(X\subset Y)\vert_{t^{\dim Y - p}} & \lambda = 1  \end{cases}.\]
Finally, this gives the vanishings
\[\begin{cases} M^{sp}_{t\ast}(X\subset Y)\vert_{t^{\dim Y +1 - p - \lambda}} = 0 & \lambda \in (0,\mu), p \leq q+1 \\ M^{sp}_{t\ast}(X\subset Y)\vert_{t^{\dim Y +1 - p - \lambda}} = 0 & \lambda \in [\mu,1), \quad p \leq q\\ M^{sp}_{t\ast}(X\subset Y)\vert_{t^{\dim Y - p}} = 0 & \lambda = 1 , \quad p \leq q \end{cases},\]
which proves the first claim.

Suppose now that ${\rm Sing}(X)$ is projective and that $M^{sp}_{t\ast}(X\subset Y)\vert_{t^{\alpha}} =0$ for all $\alpha > \dim Y - q -\mu$. Assume to the contrary that $\widetilde{\alpha}(X) < r+q+\mu$. Write $\widetilde{\alpha}(X) = r+q'+\mu'$, with $q' \in \bZ_{\geq -1}$ and $\mu' \in (0,1]$, so that $q'+\mu' < q+\mu$. There are two cases: either $q' = q$ and $\mu' < \mu$ or $q' \leq q-1$.

By Theorem \ref{tLCIQ}, we have the equality 
\[\widetilde{\alpha}(X) = r -1 + \min\{p+\lambda \mid \lambda \in (0,1], {\rm Gr}^F_{p-\dim Y}{\rm DR}\varphi_{h,\lambda}(\bQ^H_{\widetilde{Y}}[\dim(\widetilde{Y})-1]) \neq 0\},\] and so
\[ \min\{p+\lambda \mid  \lambda \in (0,1], {\rm Gr}^F_{p-\dim Y}{\rm DR}\varphi_{h,\lambda}(\bQ^H_{\widetilde{Y}}[\dim(\widetilde{Y})-1]) \neq 0\} = q'+\mu'+1,\]
which gives that the first non-zero ${\rm Gr}^F_\bullet {\rm DR}$ occurs at index $q'+1-\dim Y$:
\[ {\rm Gr}^F_{q'+1-\dim Y}{\rm DR} \varphi_{h,\mu'}(\mathbf Q_{\widetilde{Y}}^H[\dim(\widetilde{Y})-1]) \neq 0.\]
By the isomorphism \eqref{eq-isoLowestHodge}, this is a coherent sheaf of the form $\pi^*(\cF)$ for $\cF$ some coherent sheaf on $\Sigma_X={\rm Sing}(X)$. Since $\pi:\Sigma \to \Sigma_X$ is a vector bundle, the exact pullback functor $\pi^*$ on coherent sheaves induces an isomorphism on Grothendieck groups 
\[ \pi^*:K_0(\Sigma_X) \overset{\sim}{\longrightarrow} K_0(\Sigma) . \]
Since $\cF$ is a non-zero coherent sheaf on the projective variety $\Sigma_X$, its Grothendieck class $[\cF] \in K_0(\Sigma_X)_\bQ=K_0(\Sigma_X)\otimes_\bZ \bQ$ does not vanish. This follows, e.g., from \cite[Proposition 1]{MSY23}, since $\td_*([\cF])\neq 0 \in H_*(\Sigma_X)$ and 
\begin{equation}\label{tdi} \td_*:K_0(\Sigma_X)_\bQ \overset{\sim}{\longrightarrow} \bigoplus_i CH_i(\Sigma_X)_\bQ\end{equation}
is an isomorphism (cf. \cite[Corollary 18.3.2]{Fu}).
So $[\pi^*\cF]=\pi^*[\cF]\neq 0 \in K_0(\Sigma)_\bQ$. Applying the Todd transformation and using the corresponding isomorphism \eqref{tdi} for $\Sigma$, we get that $\td_*(\pi^*\cF)\neq 0$. Finally, since $s^*$ is the inverse of the isomorphism $\pi^*$ on Chow (homology) groups, we conclude that
\[ s^* {\rm td}_*({\rm Gr}^F_{q'+1-\dim Y} {\rm DR}(\varphi_{h,\mu'}(\mathbf Q_{\widetilde{Y}}^H[\dim(\widetilde{Y})-1])) = s^* {\rm td}_*(\pi^*(\cF)) \neq 0.\]

Note that the left hand side is the coefficient of $\begin{cases} t^{\dim Y - q'-1 +(1-\mu')} & \mu' \in (0,1) \\ t^{\dim Y - q' -1} & \mu' = 1\end{cases}$. 

In the second case, we must have $q' \leq q-1$, and so we get $\dim Y - q' -1 \geq \dim Y - (q-1) -1 > \dim Y - q - \mu$, using that $\mu \in (0,1]$, which contradicts our assumption.

In the first case, either $q' = q$ and $\mu' < \mu$, so that $\dim Y - q' -\mu' > \dim Y - q' - \mu =\dim Y - q - \mu$, contradicting our vanishing assumption. Finally, if $q' \leq q-1$, we get $\dim Y - q' - \mu' \geq \dim Y - q +1 - \mu' > \dim Y - q - \mu$, as both $\mu,\mu' \in (0,1]$.
\end{proof}

\begin{remark} Let $X \subseteq Y$ be a local complete intersection of pure codimension $r$ with isolated singularity at $x\in X$ and assume ${\rm lct}(X) > r-1$. Let $${\rm Sp}(X,x)=\sum_{\alpha \in \bQ} \overline{m}_{\alpha,x} t^\alpha$$ be the spectrum of $X$ at $x$, as defined in Remark \ref{rmk-compareSpectra} above.
Then \cite{D23} gives equality
\[ \min\{\alpha \mid \overline{m}_{\alpha,x} \neq 0\} = \widetilde{\alpha}(X) - r +1.\]
Theorem \ref{thm-mainHighCoefficients} gives another proof of this equality (compare with Remark \ref{rem314}).
\end{remark}

\begin{corollary}\label{cl} 
Assume that the local complete intersection $X \subset Y$ has only one isolated singularity $\{x\}$ and satisfies ${\rm lct}(X) > r-1$. Let $\widetilde{\alpha}(X) = r+q+\mu$ with $q \in \bZ_{\geq -1}$ and $\mu \in (0,1]$. Then 
\[ \min\{\alpha \mid \overline{m}_{\alpha,x} \neq 0\} = \widetilde{\alpha}(X) - r +1.\]

In particular, $X$ has $k$-Du Bois singularities if and only if
\begin{equation}\label{f40sp}
\overline{m}_{\alpha,x}=0 \ \ \text{for  all} \ \alpha <k+1.
\end{equation}
Finally, $X$ has $k$-rational singularities if and only if
\begin{equation} \label{f41sp}
\overline{m}_{\alpha,x}\ \ \text{for  all} \ \alpha \leq k+1.
\end{equation}
\end{corollary}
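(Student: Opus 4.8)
The plan is to reduce the whole statement to the computation of a single quantity, namely the largest exponent $T$ occurring in the spectral Hirzebruch--Milnor class, and then to transport that quantity through two elementary algebraic manipulations: the factor $(1-t)^r$ supplied by the isolated-singularity formula of Corollary \ref{prsp}, and the duality between the reduced spectrum ${\rm Sp}(X,x)$ and the dual spectrum ${\rm Sp}'(X,x)$ recorded in Remark \ref{rmk-compareSpectra}. All three displayed assertions will then fall out by combining the resulting formula with Theorem \ref{tLCIHighSings}.

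First I would pin down the top exponent of $M^{sp}_{t\ast}(X\subset Y)$. Since $\{x\}$ is a point it is in particular projective, and ${\rm lct}(X)>r-1$ holds by hypothesis, so the equivalence in Theorem \ref{thm-mainHighCoefficients} is available in both directions. Because $X$ has a genuine isolated complete intersection singularity, its reduced Milnor cohomology is nonzero, so ${\rm Sp}(X,x)\neq 0$ and the set in question is nonempty; set $T\colonequals\max\{\alpha\mid M^{sp}_{t\ast}(X\subset Y)\vert_{t^{\alpha}}\neq 0\}$. The vanishing ``$M^{sp}_{t\ast}(X\subset Y)\vert_{t^{\alpha}}=0$ for all $\alpha>\dim Y-q-\mu$'' holds precisely when $\dim Y-q-\mu\geq T$, i.e. $q+\mu\leq\dim Y-T$, while the left-hand condition $\widetilde{\alpha}(X)\geq r+q+\mu$ reads $q+\mu\leq\widetilde{\alpha}(X)-r$. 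As $(q,\mu)\mapsto q+\mu$ is a bijection from $\bZ_{\geq-1}\times(0,1]$ onto $(-1,\infty)$, and $\widetilde{\alpha}(X)-r>-1$ by ${\rm lct}(X)>r-1$, the biconditional of Theorem \ref{thm-mainHighCoefficients} forces the two thresholds to agree, giving $T=\dim Y+r-\widetilde{\alpha}(X)$.

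Next I would pass to the spectra. By Corollary \ref{prsp} we have $M^{sp}_{t\ast}(X\subset Y)=(1-t)^{r}\cdot{\rm Sp}'(X,x)$ as a multiple of the point class, and since the leading term of $(1-t)^{r}$ is $(-1)^{r}t^{r}$ there can be no cancellation in top degree; hence the maximal exponent $\beta_{\max}$ of ${\rm Sp}'(X,x)$ satisfies $T=r+\beta_{\max}$, so $\beta_{\max}=\dim Y-\widetilde{\alpha}(X)$. Writing ${\rm Sp}'(X,x)=\sum_{\beta}c_{\beta}t^{\beta}$, the duality \eqref{eq-DualSpectrum} gives $\overline{m}_{\alpha,x}=(-1)^{\dim X}c_{\dim X+1-\alpha}$, so that $\overline{m}_{\alpha,x}\neq 0\iff c_{\dim X+1-\alpha}\neq 0$. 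Thus the smallest exponent appearing in ${\rm Sp}(X,x)$ corresponds to the largest one appearing in ${\rm Sp}'(X,x)$, whence
\[\min\{\alpha\mid\overline{m}_{\alpha,x}\neq 0\}=\dim X+1-\beta_{\max}=(\dim Y-r)+1-(\dim Y-\widetilde{\alpha}(X))=\widetilde{\alpha}(X)-r+1,\]
which is the asserted formula.

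Finally, the two characterizations are immediate consequences of Theorem \ref{tLCIHighSings} together with this formula. Indeed, \eqref{f40sp} states that $\min\{\alpha\mid\overline{m}_{\alpha,x}\neq 0\}\geq k+1$, i.e. $\widetilde{\alpha}(X)-r+1\geq k+1$, i.e. $\widetilde{\alpha}(X)\geq k+r$, which by Theorem \ref{tLCIHighSings} is exactly the $k$-Du Bois condition; likewise \eqref{f41sp} reads $\widetilde{\alpha}(X)-r+1>k+1$, i.e. $\widetilde{\alpha}(X)>k+r$, which is the $k$-rational condition. The one genuinely delicate step is the bookkeeping of the second paragraph: one must keep straight that the duality involution $\iota$ sends minima to maxima, and that multiplying by $(1-t)^{r}$ is harmless only at the top exponent, where the leading coefficient cannot vanish — this is precisely what lets us read off $\beta_{\max}$ from $T$ without worrying about cancellations at lower degrees.
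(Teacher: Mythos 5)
Your proposal is correct and follows essentially the same route as the paper's proof: it determines the top exponent of $M^{sp}_{t\ast}(X\subset Y)$ via Theorem \ref{thm-mainHighCoefficients}, transports it through the factor $(1-t)^r$ from Corollary \ref{prsp} and the involution in \eqref{eq-DualSpectrum}, and then invokes Theorem \ref{tLCIHighSings} for the two characterizations. If anything, you are slightly more explicit than the paper, which only spells out the inequality $\chi \leq \dim(X)-q-\mu$ coming from the forward implication and leaves the reverse inequality (which needs the projectivity/lct hypotheses and the converse of Theorem \ref{thm-mainHighCoefficients}) implicit, whereas your threshold-matching argument handles both directions at once.
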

\begin{proof} As mentioned above, we have the relation
\[ {\rm Sp}(X,x) = (-1)^{\dim X} t^{\dim X +1} \iota({\rm Sp}'(X,x)).\]
where recall that $\iota \colon \bZ[t^{1/e},t^{1/e}] \to \bZ[t^{1/e},t^{-1/e}]$ is the involution sending $t^{1/e}$ to $t^{-1/e}$. We can ignore the sign $(-1)^{\dim X}$ as we are only interested in (non)-vanishing.

If we write ${\rm Sp}'(X,x) = \sum_{\alpha \in \bQ} \overline{m}_{\alpha,x}' t^\alpha$, then we have $\overline{m}_{\alpha,x} = \overline{m}_{\dim X +1-\alpha,x}'$. Thus, the desired implication is
\[ \widetilde{\alpha}(X) = r+q+\mu \implies \overline{m}_{\alpha,x}' = 0 \text{ for all } \alpha > \dim(X) - q - \mu.\]

Let $\chi = \max\{\alpha \mid \overline{m}_{\alpha,x}'\neq 0\}$. By expanding $(1-t)^r$ in Proposition \ref{prsp}, we see then that the maximal non-zero degree of $t$ in $M^{sp}_{t\ast}(X\subset Y)$ is $\chi +r$. Theorem \ref{thm-mainHighCoefficients} then gives the inequality $\chi +r \leq \dim(Y) - q - \mu$, so we get $\chi \leq \dim(X) -q - \mu$, as desired.

The last two claims follow by taking $q = k-1, \mu = 1$ and $q = k, 0 < \mu \ll 1$, respectively.
\end{proof}


\subsection{Spectral classes via Hodge-theoretic duality}\label{dua}
We begin with the main result of this subsection, which relates the lower spectral classes to the minimal exponent.

\begin{thm}\label{tlowCoefficients}
Let $X\subset Y$ be a codimension $r$ local complete intersection in a smooth complex algebraic variety $Y$. Assume $\widetilde{\alpha}(X) \geq r+q+\mu$ with $q\in\bZ_{\geq -1}$ and $\mu \in (0,1]$. Then
\[M^{sp}_{t\ast}(X\subset Y)\vert_{t^{\alpha}}=0 \ \text{ for  all } \alpha < q+1+\mu,\]
and the converse holds if ${\rm lct}(X) > r-1$ and ${\rm Sing}(X)$ is projective.
\end{thm}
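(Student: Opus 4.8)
The plan is to derive the lower-coefficient vanishing from the higher-coefficient result of Theorem~\ref{thm-mainHighCoefficients} by means of Hodge-theoretic duality, which is why it is placed in the present subsection. Recall from the proof of Theorem~\ref{thm-mainHighCoefficients} the dictionary
\[ M^{sp}_{t\ast}(X\subset Y)\vert_{t^{j+\beta}} = s^\ast\td_\ast\!\left({\rm Gr}^F_{-j}{\rm DR}\big(\varphi_{h,\exp(2\pi i\beta)}(\bQ^H_{\widetilde{Y}})\big)\right),\qquad j\in\bZ,\ \beta\in[0,1),\]
which shows that a coefficient of $t^\alpha$ with $\alpha$ \emph{small} reads off a graded de Rham piece ${\rm Gr}^F_{-j}{\rm DR}(\varphi_{h,\cdot})$ with $-j$ \emph{large}, i.e.\ one of the \emph{highest} Hodge pieces of the vanishing cycle module. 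Whereas Theorem~\ref{thm-mainHighCoefficients} was governed by the lowest Hodge piece (the first half of Corollary~\ref{cor-lowestHodge}), the present statement will be governed by the highest Hodge piece, and the natural tool to locate it is duality.

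First I would realize the dual. Since $\bQ^H_Y[\dim Y]$ is pure and hence self-dual up to a Tate twist, and since ${\rm Sp}_X$ commutes with $\bD$, the module ${\rm Sp}_X(\bQ^H_Y[\dim Y])$ is self-dual up to twist; dualizing the short exact sequence~\eqref{f3} therefore exhibits $\bD(\varphi_h(\bQ^H_{\widetilde{Y}}[\dim\widetilde{Y}-1]))(-\dim Y)$ as the \emph{sub}object of ${\rm Sp}_X(\bQ^H_Y[\dim Y])$. On the non-unipotent eigenvalues $\mu\in(0,1)$ the specialization and the vanishing cycles agree (the constant-sheaf sub of~\eqref{f3} is unipotent), so $\bD\varphi_{h,\mu}\cong\varphi_{h,1-\mu}$ up to twist; on the unipotent part $\mu=1$ the dual is precisely the object whose lowest Hodge piece is computed in the second half of Corollary~\ref{cor-lowestHodge}. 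In either case one may work locally over an open $U\subseteq Y$ on which $X\cap U$ is cut out by a regular sequence $f_1,\dots,f_r$, where by~\eqref{eq-nonUnipQ} and the surrounding discussion all the modules in question are filtered summands of ${\rm Gr}_V(\cB_f)$ on the \emph{smooth} space $Y\times\bC^r$; there Saito's duality for filtered $\cD$-modules, i.e.\ Grothendieck--Serre duality on the graded de Rham pieces, gives the equivalence
\[ {\rm Gr}^F_{-j}{\rm DR}(\varphi_{h,\lambda})=0 \iff {\rm Gr}^F_{j'}{\rm DR}\big(\bD(\varphi_{h,\lambda})(\mathrm{twist})\big)=0\]
for complementary indices $-j,j'$, so that the highest non-vanishing Hodge index of $\varphi_{h,\lambda}$ equals the negative (shifted by the twist) of the lowest non-vanishing index of its dual.

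With these identifications the forward implication becomes index bookkeeping. The hypothesis $\widetilde{\alpha}(X)\ge r+q+\mu$ pins down, through Theorem~\ref{tLCIQ} and Corollary~\ref{cor-lowestHodge}, the lowest non-vanishing Hodge index of each dual piece, hence the highest non-vanishing Hodge index of each $\varphi_{h,\lambda}$; translating back through the displayed dictionary (the $1-\lambda$ shift for $\lambda\in(0,1)$ versus the $0$ shift for $\lambda=1$) one checks that every $t^\alpha$ with $\alpha<q+1+\mu$ selects a de Rham piece strictly above this highest index, whence the coefficient vanishes. That the resulting bound $\alpha<q+1+\mu$ is the mirror of the bound $\alpha>\dim Y-q-\mu$ of Theorem~\ref{thm-mainHighCoefficients} about the axis $(\dim Y+1)/2$ is exactly the shadow of this duality, reflecting the symmetry ${\rm Sp}(X,x)=(-1)^{\dim X}t^{\dim X+1}\iota({\rm Sp}'(\cdots))$ of Remark~\ref{rmk-compareSpectra} after multiplication by the factor $(1-t)^r$.

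For the converse, assume ${\rm Sing}(X)$ is projective, ${\rm lct}(X)>r-1$, and that $M^{sp}_{t\ast}(X\subset Y)\vert_{t^\alpha}=0$ for all $\alpha<q+1+\mu$, and suppose for contradiction that $\widetilde{\alpha}(X)<r+q+\mu$. Then, exactly as in the converse part of Theorem~\ref{thm-mainHighCoefficients}, Corollary~\ref{cor-lowestHodge} identifies the corresponding lowest Hodge piece of the dual with $\pi^\ast(\cF)$ for a non-zero coherent sheaf $\cF$ on the projective variety $\Sigma_X$; since $\pi^\ast\colon K_0(\Sigma_X)_\bQ\overset{\sim}{\longrightarrow}K_0(\Sigma)_\bQ$ and $\td_\ast\colon K_0(\Sigma_X)_\bQ\overset{\sim}{\longrightarrow}\bigoplus_i CH_i(\Sigma_X)_\bQ$ are isomorphisms by~\eqref{tdi}, the class $s^\ast\td_\ast(\pi^\ast\cF)$ is non-zero, and by the duality equivalence it equals, up to sign, a coefficient $M^{sp}_{t\ast}(X\subset Y)\vert_{t^\alpha}$ with $\alpha<q+1+\mu$, contradicting the vanishing assumption. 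I expect the principal obstacle to be the careful handling of Saito's duality together with the Tate twists, so that the two regimes $\mu\in(0,1)$ and $\mu=1$ both yield the single uniform bound $q+1+\mu$; in particular the extra unit of index shift in the unipotent case (the lowest dual index $q+2-\dim Y$ rather than $q+1-\dim Y$ in Corollary~\ref{cor-lowestHodge}) must be reconciled against the $(1-\lambda)$-versus-$0$ discrepancy in the Hodge-index-to-$t$-degree dictionary.
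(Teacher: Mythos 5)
Your proposal follows essentially the same route as the paper's proof: it converts the low $t$-coefficients into high Hodge pieces of $\varphi_{h,\lambda}$, passes to the lowest Hodge pieces of the duals via Saito/Grothendieck duality on ${\rm Gr}^F_\bullet{\rm DR}$ (using $\mathbf D(\varphi_{h,\lambda})(-\dim Y)\cong\varphi_{h,1-\lambda}$ on the non-unipotent part and the second half of Corollary~\ref{cor-lowestHodge} on the unipotent part), and runs the converse through the $\pi^\ast(\cF)$ identification, the Todd-class isomorphism \eqref{tdi} on the projective singular locus, and the injectivity of $s^\ast$. The index bookkeeping and the reconciliation of the unipotent shift that you flag as the main obstacle are handled in the paper exactly along the lines you describe, so the proposal is correct in approach and substance.
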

\begin{proof} Recall that, for $\lambda \in [0,1)$, the coefficient $M^{sp}_{t\ast}(X\subset Y)\vert_{t^{p+\lambda}}$ is 
\[s^* {\rm td}_*([{\rm Gr}^F_{-p} {\rm DR}(\varphi_{h,1-\lambda}(\bQ^H_{\widetilde{Y}}[\dim(\widetilde{Y})-1]))]).\]

 Assume $\widetilde{\alpha}(X) \geq r+q+\mu$, then we will show 
 \[ {\rm Gr}^F_{-p}{\rm DR}(\varphi_{h,1-\lambda}(\bQ_{\widetilde{Y}}^H[\dim(\widetilde{Y})-1])) = 0 \text{ for } \begin{cases} p \leq q & \lambda \in [\mu,1) \\ p \leq q+1 & \lambda \in [0,\mu)\end{cases}.\] By duality, this vanishing is equivalent to the following: for $\lambda \in (0,1)$, we want
 \[ {\rm Gr}^F_{p-\dim Y}{\rm DR}(\mathbf D(\varphi_{h,\lambda}(\bQ_{\widetilde{Y}}^H[\dim(\widetilde{Y})-1]))(-\dim Y)) = 0 \text{ for } \begin{cases} p \leq q & \lambda \in [\mu,1) \\ p \leq q+1 & \lambda \in (0,\mu) \end{cases}\]
 and for $\lambda = 0$, we want
 \[ {\rm Gr}^F_{p-\dim Y}{\rm DR}(\mathbf D(\varphi_{h,1}(\bQ_{\widetilde{Y}}^H[\dim(\widetilde{Y})-1]))(-\dim Y)) = 0 \text{ for } p \leq q+1.\]

Under the assumption of the minimal exponent, by the vanishing in \eqref{f30}, we get
\begin{equation}
\begin{cases} 
 {\gr}^F_{p - \dim Y} {\rm DR}(\varphi_{h,\lambda}(\mathbf Q^H_{\widetilde{Y}}[\dim(\widetilde{Y})-1])) = 0 \text{ for all } \lambda \in [\mu,1), \quad p \leq q \\ 
{\gr}^F_{p - \dim Y} {\rm DR}(\varphi_{h,\lambda}(\mathbf Q^H_{\widetilde{Y}}[\dim(\widetilde{Y})-1])) = 0 \text{ for all } \lambda \in (0,\mu), \quad p \leq q +1,
\end{cases}
\end{equation}

This immediately gives us the vanishings we want for $\lambda \in (0,1)$. Indeed, using that ${\rm Sp}_X(-)$ commutes with duality $\bD$ and the isomorphism
\[ {\rm Sp}_{X,\lambda}(\mathbf Q_Y^H[\dim(Y)]) \cong \varphi_{h,\lambda}(\mathbf Q_{\widetilde{Y}}^H[\dim(\widetilde{Y})-1]) \text{ for } \lambda \in (0,1)\]
we conclude that there is an isomorphism
\begin{equation} \label{eq-nonunipDual} \mathbf D(\varphi_{h,\lambda}(\mathbf Q_{\widetilde{Y}}^H[\dim(\widetilde{Y})-1]))(-\dim Y)\cong \varphi_{h,1-\lambda}(\mathbf Q_{\widetilde{Y}}^H[\dim(\widetilde{Y})-1]),\end{equation}
where the Tate twist on the left hand side uses the fact that $\bQ_Y^H[\dim Y]$ is a pure Hodge module of weight $\dim Y$.

For the unipotent part, towards contradiction let $p\leq q+1$ be minimal such that 
\[{\rm Gr}^F_{p-\dim Y} {\rm DR}(\mathbf D(\varphi_{h,1}(\bQ^H_{\widetilde{Y}}[\dim(\widetilde{Y})-1]))(-\dim Y)) \neq 0.\]
Then, by the end of the proof of Corollary \ref{cor-lowestHodge}, there exists some $U\subseteq Y$ with $f_1,\dots, f_r$ a regular sequence defining $X\cap U \subseteq U$ such that
\[ {\rm Gr}^F_{p-\dim Y} {\rm Gr}_V^{r-1}(\cB_f) \neq 0,\]
but the assumption $\widetilde{\alpha}(X) \geq r+q + \mu$ implies by Theorem \ref{tLCI} that $F_{q+1-\dim Y} {\rm Gr}_V^{r-1}(\cB_f) = 0$, and so the inequality $p\leq q+1$ gives a contradiction.

For the converse, assume ${\rm Sing}(X)$ is projective and $M^{sp}_{t\ast}(X\subset Y)\vert_{t^{\alpha}} = 0$ for all $\alpha < q+1+\mu$, but assume toward contradiction that $r-1 < \widetilde{\alpha}(X) = r+p+\lambda < r+q+\mu$. Here we take $p \in \Z_{\geq -1}$ and $\lambda \in (0,1]$.

First, if $\lambda \in (0,1)$, then by Corollary \ref{cor-lowestHodge} we have an isomorphism
\[ {\rm Gr}^F_{p+1-\dim Y}{\rm DR}(\varphi_{h,\lambda}(\bQ^H_{\widetilde{Y}}[\dim(\widetilde{Y})-1]))) = \pi^*(\cF) \neq 0,\]
where $\cF = {\rm Gr}^F_{p+1-\dim Y} {\rm Gr}_V^{r-1+\lambda}(\cB_{X,Y})$, and because $p+\lambda < q+\mu$ we have either that $\lambda < \mu$ and $p\leq q$ or $\lambda \in [\mu,1)$ and $p \leq q-1$, and in any case, $p+1+\lambda < q+1+\mu$. Applying Grothendieck duality $\mathbb D_{\rm coh} = R\cH om_{\cO}(-,\omega_\Sigma^\bullet)$, which is known to satisfy the following isomorphism on mixed Hodge modules (\cite[Sect. 2.4]{Saito88}):
\[ \mathbb D_{\rm coh} {\rm Gr}^F_{\bullet}{\rm DR}(-) \cong {\rm Gr}^F_{-\bullet}{\rm DR}(\mathbf D(-)),\]
and using the duality relation \eqref{eq-nonunipDual}, we get
\[ {\rm Gr}^F_{-(p+1)} {\rm DR}(\varphi_{h,1-\lambda}(\bQ^H_{\widetilde{Y}}[\dim(\widetilde{Y})-1])) = \mathbb D_{\rm coh}(\pi^*(\cF)).\]

By the argument of Theorem \ref{thm-mainHighCoefficients}, we know that ${\rm td}_*([\pi^*(\cF)]) \neq 0$. Hence, by \cite[Example 18.3.19]{Fu}, we conclude that ${\rm td}_*([\mathbb D_{\rm coh}(\pi^*(\cF))]) \neq 0$. Finally, as $s^*$ is injective, we get the coefficient $M^{sp}_{t\ast}(X\subset Y)\vert_{t^{p+1+\lambda}} \neq 0$, which using that $p+1+\lambda < q+1+\mu$ gives a contradiction.

If $\lambda = 1$, then by the last statement of Corollary \ref{cor-lowestHodge}, we have an isomorphism
\[ {\rm Gr}^F_{p+2-\dim Y}{\rm DR}(\mathbf D(\varphi_{h,1}(\bQ^H_{\widetilde{Y}}[\dim(\widetilde{Y})-1]))(-\dim Y)) = \pi^*(\cF) \neq 0,\]
where $\cF = {\rm Gr}^F_{p+2-\dim Y} {\rm Gr}_V^{r-1}(\cB_{X,Y})$. As $p+\lambda < q+\mu$ and $\lambda = 1$, we must have $\mu =1$ and so $p+2 \leq q+1$. If we apply Grothendieck duality to both sides, we get
\[ {\rm Gr}^F_{-(p+2)} {\rm DR}(\varphi_{h,1}(\bQ^H_{\widetilde{Y}}[\dim(\widetilde{Y})-1])) \cong \mathbb D_{\rm coh}(\pi^*(\cF)).\]
By assumption, if we apply $s^* {\rm td}_*$ to the left hand side of the above isomorphism, we get the coefficient $M^{sp}_{t\ast}(X\subset Y)\vert_{t^{p+2}} \neq 0$, but by the inequality $p+2\leq q+1$, we get a contradiction.
\end{proof}

\begin{corollary} Assume that the local complete intersection $X \subset Y$ has only one isolated singularity $\{x\}$ and satisfies ${\rm lct}(X) > r-1$. Let $\widetilde{\alpha}(X) = r+q+\mu$ with $q \in \bZ_{\geq -1}$ and $\mu \in (0,1]$. Then 
\[ \max\{\alpha \in \bQ \mid \overline{m}_{\alpha,x} \neq 0\} = \dim(X) - q - \mu .\]
\end{corollary}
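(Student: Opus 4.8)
The plan is to dualize the problem so that the \emph{largest} exponent in the spectrum ${\rm Sp}(X,x)$ becomes the \emph{smallest} exponent in the dual spectrum ${\rm Sp}'(X,x)$, and then to invoke Theorem \ref{tlowCoefficients}, which controls precisely the lowest spectral classes (this is the ``max'' counterpart of the ``min'' computation in Corollary \ref{cl}). First I would record the two inputs already available in the isolated case. By Corollary \ref{prsp},
\[ M^{sp}_{t\ast}(X\subset Y) = (1-t)^r \cdot {\rm Sp}'(X,x),\]
and by the duality relation of Remark \ref{rmk-compareSpectra},
\[ {\rm Sp}(X,x) = (-1)^{\dim X} t^{\dim X + 1}\, \iota({\rm Sp}'(X,x)).\]
Writing ${\rm Sp}'(X,x) = \sum_{\gamma} \overline{m}'_{\gamma,x}\, t^{\gamma}$ and comparing coefficients in the second identity gives $\overline{m}_{\alpha,x} = (-1)^{\dim X}\,\overline{m}'_{\dim X + 1 - \alpha,\, x}$, so that
\[ \max\{\alpha \mid \overline{m}_{\alpha,x}\neq 0\} = \dim X + 1 - \min\{\gamma \mid \overline{m}'_{\gamma,x}\neq 0\}.\]
Thus the entire statement reduces to computing the bottom exponent $\gamma_0 \colonequals \min\{\gamma \mid \overline{m}'_{\gamma,x}\neq 0\}$ of the dual spectrum.

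Next I would observe that $\gamma_0$ coincides with the minimal exponent occurring in $M^{sp}_{t\ast}(X\subset Y)$. Indeed, $(1-t)^r$ is a polynomial with non-negative exponents and non-zero constant term $1$, so multiplication by it neither lowers the bottom degree nor cancels the lowest term of ${\rm Sp}'(X,x)$ (the exponent $\gamma_0$ is produced in the product only via $t^0\cdot \overline{m}'_{\gamma_0,x}t^{\gamma_0}$). Hence
\[ \min\{\alpha \mid M^{sp}_{t\ast}(X\subset Y)\vert_{t^{\alpha}}\neq 0\} = \gamma_0.\]

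Finally I would pin down $\gamma_0$ using Theorem \ref{tlowCoefficients}, whose hypotheses are satisfied here since $\Sing(X) = \{x\}$ is a point (in particular projective) and ${\rm lct}(X) > r-1$ by assumption. Writing $\widetilde{\alpha}(X) = r+q+\mu$, the forward direction gives $M^{sp}_{t\ast}(X\subset Y)\vert_{t^{\alpha}} = 0$ for all $\alpha < q+1+\mu$, so $\gamma_0 \geq q+1+\mu$. For the reverse inequality I would argue by contradiction: if $\gamma_0 > q+1+\mu$, then—noting $\gamma_0 > 0$, since ${\rm lct}(X) > r-1$ forces $\widetilde{\alpha}(X) > r-1$—I can write $\gamma_0 = q'+1+\mu'$ with $q' \in \bZ_{\geq -1}$ and $\mu' \in (0,1]$ by the usual ceiling-type decomposition; as $M^{sp}_{t\ast}(X\subset Y)\vert_{t^{\alpha}} = 0$ for all $\alpha < \gamma_0$, the converse part of Theorem \ref{tlowCoefficients} yields $\widetilde{\alpha}(X) \geq r+q'+\mu' = r+\gamma_0-1 > r+q+\mu$, contradicting $\widetilde{\alpha}(X) = r+q+\mu$. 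Therefore $\gamma_0 = q+1+\mu$, and substituting into the displayed reduction gives
\[ \max\{\alpha \mid \overline{m}_{\alpha,x}\neq 0\} = \dim X + 1 - (q+1+\mu) = \dim X - q - \mu,\]
as desired. The one delicate point is this last reverse inequality: one must ensure the converse in Theorem \ref{tlowCoefficients} can be applied at the exact threshold $\gamma_0$, which is exactly why the ceiling-type decomposition and the positivity $\gamma_0 > 0$ (guaranteed by ${\rm lct}(X) > r-1$) are needed.
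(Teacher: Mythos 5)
Your proof is correct and follows essentially the same route as the paper: reduce to the bottom exponent of the dual spectrum via the relation ${\rm Sp}(X,x) = (-1)^{\dim X} t^{\dim X+1}\iota({\rm Sp}'(X,x))$, and then pin that exponent down as $q+1+\mu$ using both directions of Theorem \ref{tlowCoefficients} (the converse being available since a point is projective). You are in fact slightly more careful than the paper's write-up, which silently drops the harmless $(1-t)^r$ factor from Corollary \ref{prsp} and leaves implicit the threshold argument you spell out for the reverse inequality.
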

\begin{proof} By Proposition \ref{ptop}, we have
\[ M_{t*}^{\rm sp}(X\subseteq Y) = {\rm Sp}'(X,x) = \sum_{\alpha \in \bQ} \overline{m}'_{\alpha,x} t^\alpha\]
in this setting. Thus, Theorem \ref{tlowCoefficients} gives equality (using that the converse holds in the isolated singularities setting)
\[ \min \{ \alpha \mid \overline{m}_{\alpha,x}' \neq 0\} = q+1+\mu.\]

Using the identification \eqref{eq-DualSpectrum}, this says that
\[ \max\{\alpha \mid \overline{m}_{\alpha,x} \neq 0\} = (\dim X + 1) - (q+1+\mu) = \dim(X) - q - \mu,\]
as desired.
\end{proof}

We conclude this section by relating the unipotent spectral classes to a weakening of the $\bQ$-homology manifold condition, recently introduced in \cite{PP,DOR}.

On any reduced complex algebraic variety $Z$, there is a natural morphism (defined in \cite[(4.5.12)]{Saito90}) in $D^b({\rm MHM}(Z))$
\[ \psi_Z \colon \bQ_Z^H[\dim Z] \to \mathbf D_Z(\bQ_Z^H[\dim Z])(-\dim Z),\]
which is a quasi-isomorphism if $Z$ is smooth. More generally, the morphism $\psi_Z$ is a quasi-isomorphism if and only if $Z$ is a $\bQ$-homology manifold, the underlying morphism of $\bQ$-perverse sheaves being a sheaf-theoretic incarnation of the Poincar\'{e} duality morphism.

The papers \cite{PP,DOR} introduce a natural Hodge-theoretic weakening of the $\bQ$-homology manifold condition, which is defined by applying ${\rm Gr}^F_{-p}{\rm DR}$ to the morphism $\psi_Z$ and asking whether the morphism ${\rm Gr}^F_{-p}{\rm DR}(\psi_Z)$ is a quasi-isomorphism in $D^b_{\rm coh}(\cO_Z)$. We define
\[ {\rm HRH}(Z) = {\rm sup}\left\{k \mid {\rm Gr}^F_{-p}{\rm DR}(\psi_Z) \text{ is a quasi-isomorphism for all } p \leq k\right\},\]
with the convention that ${\rm HRH}(Z) = -1$ if ${\rm Gr}^F_0 {\rm DR}(\psi_Z)$ is not a quasi-isomorphism.

It is not hard to see that if $\pi \colon Z' \to Z$ is a smooth morphism, then ${\rm HRH}(Z) = {\rm HRH}(Z')$, see \cite[Lemma 6.10]{DOR} for a proof.

Essentially by definition, if $X$ is a variety with lci singularities, then for any $k\geq 0$, we have
\begin{equation} \label{eq-differenceDBRatl} X \text{ is } k\text{-rational} \iff X \text{ is }k\text{-Du Bois and } {\rm HRH}(X) \geq k.\end{equation}

If $M \in {\rm MHM}(Z)$ is a mixed Hodge module on a (possibly singular) complex variety $Z$, we define $$p(M) = \min\{q \mid {\rm Gr}^F_q {\rm DR}(M) \text{ is not acyclic}\}.$$ 

In the embedded lci setting $X\subseteq Y$, we have the following. 

\begin{thm} \label{tHRHInequality} $(a)$ (\cite[Theorem I]{DOR}) Let $X\subseteq Y$ be an lci subvariety. Then
\[ {\rm HRH}(X) \geq p(\varphi_{h,1}(\bQ_{\widetilde{Y}}^H[\dim(\widetilde{Y})-1])) + \dim Y - 1.\]

$(b)$ (\cite[Theorem H]{DOR}) Let $X\subseteq Y$ be a hypersurface. Then
\[ {\rm HRH}(X) = p(\varphi_{h,1}(\bQ_{\widetilde{Y}}^H[\dim(\widetilde{Y})-1])) + \dim Y - 1,\]
and if $X =\{f=0\} \subseteq Y$, then
\[ {\rm HRH}(X) = p(\varphi_{f,1}(\bQ^H_{Y}[\dim(Y)])) + \dim Y -2.\]
\end{thm}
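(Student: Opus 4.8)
The plan is to reduce both statements to a single \emph{Core Lemma} about a hypersurface and then analyze the self-duality morphism $\psi_Z$ through the vanishing cycles of the defining function. For the reduction I would use that $\mathrm{HRH}$ is insensitive to smooth morphisms: since $\pi\colon C_XY\to X$ is a vector bundle, \cite[Lemma 6.10]{DOR} gives $\mathrm{HRH}(X)=\mathrm{HRH}(C_XY)$, and $C_XY=h^{-1}(0)$ is a hypersurface in $W\colonequals\widetilde{Y}$. The Core Lemma I would aim to prove is: for a hypersurface $Z=h^{-1}(0)$ in a variety $W$, one has $\mathrm{HRH}(Z)\geq p(\varphi_{h,1}(\bQ_W^H[\dim Z]))+\dim Z-1$, with equality when $W$ is smooth. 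Applying this to $C_XY\subseteq\widetilde{Y}$ (where $\widetilde{Y}$ is lci) yields $(a)$; applying it to $C_XY\subseteq\widetilde{Y}$ when $X$ is a hypersurface (so that $\widetilde{Y}\cong Y\times\bC$ is smooth, as in the proof of Proposition \ref{pr13}) yields the first equality of $(b)$; and applying it directly to $X=f^{-1}(0)\subseteq Y$ gives the second equality, the two descriptions in $(b)$ being matched through the section identification \eqref{eq50monodromy} and the shift it induces on $\mathrm{Gr}^F\mathrm{DR}$.

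For the Core Lemma I would start from the short exact sequence \eqref{f3}, namely $0\to\bQ_Z^H[\dim Z]\to\psi_h(\bQ_W^H[\dim Z])\xrightarrow{\mathrm{can}}\varphi_h(\bQ_W^H[\dim Z])\to 0$, and dualize it. Using that $\psi_h$ and $\varphi_h$ commute with $\mathbf{D}$ up to a Tate twist, together with the self-duality of $\bQ_W^H[\dim W]$ when $W$ is smooth, the failure of $\psi_Z$ to be an isomorphism gets identified with the failure of self-duality of the vanishing cycle module $\varphi_h(\bQ_W^H[\dim Z])$ alone. The non-unipotent part is automatically self-dual through the pairing $\lambda\leftrightarrow 1-\lambda$ of \eqref{eq-nonunipDual}, so the whole obstruction is carried by a single self-duality morphism $\nu\colon\varphi_{h,1}(\bQ_W^H[\dim Z])\to\mathbf{D}(\varphi_{h,1}(\bQ_W^H[\dim Z]))(\ast)$ with an appropriate Tate twist. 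I would then show that $\mathrm{HRH}(Z)$ equals, up to the shift $\dim Z-1$, the largest $k$ for which $\mathrm{Gr}^F_{-p}\mathrm{DR}(\nu)$ is a quasi-isomorphism for all $p\leq k$.

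The final step is to locate where $\nu$ first fails to be a quasi-isomorphism. Here I would invoke the compatibility $\mathbb{D}_{\mathrm{coh}}\,\mathrm{Gr}^F_\bullet\mathrm{DR}(-)\cong\mathrm{Gr}^F_{-\bullet}\mathrm{DR}(\mathbf{D}(-))$ already used in the proof of Theorem \ref{tlowCoefficients}, together with Corollary \ref{cor-lowestHodge} and the explicit description \eqref{eq-isoLowestHodge} of the lowest graded de Rham piece of $\varphi_{h,1}$ as a pullback $\pi^*(\cF)$ of a coherent sheaf. Strictly below $p(\varphi_{h,1})$ both the source and the target of $\nu$ vanish, so $\nu$ is trivially a quasi-isomorphism there; at the critical index Corollary \ref{cor-lowestHodge} shows that the source $\varphi_{h,1}$ has a nonzero lowest piece while the lowest piece of its dual sits exactly one step higher (this is precisely the $k$-Du Bois versus $k$-rational discrepancy recorded in \eqref{eq-differenceDBRatl}). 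Hence $\nu$ cannot be a quasi-isomorphism at that index, which pins $\mathrm{HRH}(Z)$ exactly in the smooth case and gives the equality in $(b)$. The globalization of these local identifications over an open cover is handled exactly as in Lemma \ref{lem-globalization}.

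The main obstacle is the lci case $(a)$. When $W=\widetilde{Y}$ is only lci, the module $\bQ_W^H[\dim W]$ is no longer self-dual, and dualizing \eqref{f3} introduces a correction term built from the cone of $\psi_W$ (equivalently, from the defect measured by $\mathrm{HRH}(W)$). The hard part will be controlling this correction: I would need to show that its graded de Rham pieces lie in indices that cannot lower the threshold, so that the identification of the obstruction with $p(\varphi_{h,1})$ survives as the stated one-sided inequality rather than an equality. Carefully tracking the Tate twists and Hodge indices through the dualized sequence, and verifying that the ambient duality defect can only shift the obstruction upward, is where the argument is most delicate and is precisely what forces $(a)$ to be an inequality while $(b)$ is an equality.
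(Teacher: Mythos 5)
This statement is not proved in the paper at all: it is quoted verbatim from \cite{DOR} (Theorems H and I there), so there is no internal proof to compare your argument against. Judged on its own terms, your sketch has the right high-level shape --- reduce to the zero fibre of $h$ via ${\rm HRH}(X)={\rm HRH}(C_XY)$ and \cite[Lemma 6.10]{DOR}, then read off the failure of $\psi_Z$ from the (dualized) specialization sequence \eqref{f3}, so that the obstruction is carried by $\varphi_{h,1}$ and a duality morphism on it --- but two of your steps are genuinely wrong or unproved.

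First, your route to the first equality in $(b)$ rests on the claim that $\widetilde{Y}\cong Y\times\bC$ is smooth when $X$ is a hypersurface. That is false: the proof of Proposition \ref{pr13} only identifies the \emph{normal cone} $C_XY$ with $X\times\bC$; the deformation space $\widetilde{Y}$ is locally the hypersurface $\{f=uv\}\subseteq Y\times\bC^2_{u,v}$ (with $h=u$), which is singular exactly over ${\rm Sing}(X)$. Remark \ref{supp} already signals that $\widetilde{Y}$ is smooth only when $X$ is. So your Core Lemma ``with equality when $W$ is smooth'' never applies to $C_XY\subseteq\widetilde{Y}$, and the first equality of $(b)$ would have to be obtained differently (e.g.\ locally from the third formula via \eqref{eq50monodromy} and the locality of both sides, which you only gesture at). Second, the step that ``pins ${\rm HRH}$ exactly'' --- that the lowest nonacyclic ${\rm Gr}^F{\rm DR}$ index of $\mathbf D(\varphi_{h,1})(\ast)$ sits exactly one above that of $\varphi_{h,1}$ --- is not delivered by the tools you cite. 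Corollary \ref{cor-lowestHodge} describes the lowest piece of $\varphi_{h,\mu}$ only for $\mu$ the fractional part of $\widetilde{\alpha}(X)$, and its dual statement only when $\mu=1$; when the minimal exponent is non-integral it says nothing about $p(\varphi_{h,1})$ or $p(\mathbf D\varphi_{h,1}(\ast))$ --- the paper states this limitation explicitly just before Theorem \ref{thm-HRH}. Equation \eqref{eq-differenceDBRatl} compares ${\rm HRH}$ with the Du Bois/rational thresholds tied to the minimal exponent, not with $p(\varphi_{h,1})$ in general, so invoking it here is close to circular. Finally, for $(a)$ you yourself leave open the control of the correction term coming from the non-self-duality of $\bQ^H_{\widetilde{Y}}[\dim\widetilde{Y}]$; without a bound of the form $p(\mathbf D(\varphi_{h,1})(-\dim Y))\geq p(\varphi_{h,1})$ (or an argument that the cone of $\psi_{C_XY}$ is built from subquotients of $\varphi_{h,1}$ and its Tate twists only), even the one-sided inequality does not follow. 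As it stands the proposal is an outline with the decisive estimates missing and one load-bearing claim that is false.
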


A relation between the invariant ${\rm HRH}(X)$ defined above and the unipotent Hirzebruch-Milnor classes is given by the next result, however, we can only handle the hypersurface case. The difficulty with the general lci case is that, in Corollary \ref{cor-lowestHodge}, one only knows that the first non-zero ${\rm Gr}{\rm DR}$ is a pull-back along $\pi \colon \Sigma \to \Sigma_X$ in the case where that lowest Hodge piece actually corresponds to the minimal exponent. In particular, if the minimal exponent is not an integer, then we do not have any precise understanding of ${\rm Gr}^F_\bullet {\rm DR}(\varphi_{h,1}(\bQ_{\widetilde{Y}}^H[\dim(\widetilde{Y})-1]))$. So even when $\Sigma$ is projective, we cannot conclude vanishing, except in the hypersurface case where we can work on $\Sigma_X$ itself.

\begin{thm} \label{thm-HRH} Let $X\subseteq Y$ be a hypersurface in a smooth complex algebraic variety $Y$. Then 
\[ \min \{ p \in \bZ \mid [M_{y*}^{\{1\}}(X\subseteq Y)]_p \neq 0\} \geq  {\rm HRH}(X) +1,\]
and if ${\rm Sing}(X)$ is projective, then equality holds.
\end{thm}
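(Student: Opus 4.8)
The plan is to reduce to the vanishing cycles of $f$ on the smooth ambient $Y$, and there exploit a self-duality. First I would invoke Proposition \ref{pr13}: since $r=1$ one has $M^{\{1\}}_{y*}(X\subseteq Y)=(1+y)\,M^{\{1\}}_{y*}(X)$, and multiplication by $(1+y)$ does not change the lowest power of $y$ occurring with a nonzero coefficient (at that lowest power there is no cancellation). Hence it suffices to analyze $M^{\{1\}}_{y*}(X)=T_{y*}(\varphi_{f,1}\bQ^H_Y)$, where I may assume $X=\{f=0\}$ with $f\in\cO_Y(Y)$ (the general hypersurface case follows from the same argument applied to the monodromic module $N:=\varphi_{h,1}(\bQ^H_{\widetilde Y}[\dim\widetilde Y-1])$ on $C_XY$, or via \cite{MSY23}). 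Writing $M_0:=\varphi^H_{f,1}(\bQ^H_Y[\dim Y])$, a single mixed Hodge module supported on $\Sigma_X=\Sing(X)$, the definitions of $T_{y*}$ and $\DR_y$ give $[M^{\{1\}}_{y*}(X)]_p=\pm\,{\rm td}_*([{\rm Gr}^F_{-p}{\rm DR}(M_0)])$. In particular every nonzero coefficient satisfies $p\geq -q_{\max}(M_0)$, where $q_{\max}(M_0):=\max\{q\mid {\rm Gr}^F_q{\rm DR}(M_0)\neq 0\}$.

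The key step is the self-duality. Because $Y$ is smooth, $\bQ^H_Y[\dim Y]$ is pure of weight $\dim Y$, and Saito's self-duality for the unipotent vanishing cycles of a pure Hodge module yields $\mathbf D M_0\cong M_0(\dim Y-1)$. Applying the identity $\mathbb D_{\rm coh}\,{\rm Gr}^F_\bullet{\rm DR}(-)\cong{\rm Gr}^F_{-\bullet}{\rm DR}(\mathbf D(-))$ together with the effect of the Tate twist on the Hodge index, the set $\{q\mid {\rm Gr}^F_q{\rm DR}(M_0)\neq 0\}$ is invariant under the involution $q\mapsto -q-(\dim Y-1)$; hence $q_{\max}(M_0)=-p(M_0)-(\dim Y-1)$, where $p(M_0):=\min\{q\mid {\rm Gr}^F_q{\rm DR}(M_0)\neq 0\}$. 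By Theorem \ref{tHRHInequality}$(b)$ we have ${\rm HRH}(X)=p(M_0)+\dim Y-2$, so $-q_{\max}(M_0)=p(M_0)+\dim Y-1={\rm HRH}(X)+1$. Combined with the previous paragraph this proves the inequality $\min\{p\mid [M^{\{1\}}_{y*}(X\subseteq Y)]_p\neq 0\}\geq {\rm HRH}(X)+1$.

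For the converse under the projectivity hypothesis, I would show that the coefficient at $p={\rm HRH}(X)+1=-q_{\max}(M_0)$ is actually nonzero. Up to sign this coefficient is ${\rm td}_*([{\rm Gr}^F_{q_{\max}(M_0)}{\rm DR}(M_0)])$, and by the duality identity the top piece satisfies ${\rm Gr}^F_{q_{\max}(M_0)}{\rm DR}(M_0)\cong\mathbb D_{\rm coh}(\cF)$, where $\cF:={\rm Gr}^F_{p(M_0)}{\rm DR}(M_0)$ is the lowest Hodge piece of $M_0$, a nonzero coherent sheaf on $\Sigma_X$. Since $\Sigma_X$ is projective, $[\cF]\neq 0$ in $K_0(\Sigma_X)_\bQ$ (the Todd transformation is the isomorphism \eqref{tdi} and ${\rm td}_*[\cF]\neq 0$ by \cite[Proposition 1]{MSY23}), whence ${\rm td}_*([\mathbb D_{\rm coh}\cF])\neq 0$ by \cite[Example 18.3.19]{Fu}. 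Thus the coefficient at $p={\rm HRH}(X)+1$ does not vanish, giving equality.

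The main obstacle is the self-duality step: one must pin down the exact Tate twist $(\dim Y-1)$ in $\mathbf D M_0\cong M_0(\dim Y-1)$, and check that the extremal pieces ${\rm Gr}^F_{p(M_0)}{\rm DR}(M_0)$ and ${\rm Gr}^F_{q_{\max}(M_0)}{\rm DR}(M_0)$ are genuine coherent sheaves (concentrated in a single cohomological degree), so that the Grothendieck-duality and Todd-isomorphism arguments apply. This is precisely where the restriction to hypersurfaces is essential: the reduction transports the computation to the vanishing cycles of a pure Hodge module on the \emph{smooth} variety $Y$, which enjoys this clean self-duality, whereas for a general lci the deformation space $\widetilde Y$ is singular, $N$ is not self-dual up to a twist, and — as noted after Corollary \ref{cor-lowestHodge} — the relevant top Hodge piece need not be a pullback from $\Sigma_X$ unless it corresponds to the minimal exponent.
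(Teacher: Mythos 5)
Your strategy is essentially a hands-on version of the paper's proof: the paper likewise reduces to $M^{\{1\}}_{y*}(X)$ via Proposition \ref{pr13} and then simply cites \cite[Theorem 4]{MSY23} --- whose content is exactly the self-duality of $\varphi_{f,1}$ that you carry out explicitly --- together with Theorem \ref{tHRHInequality}(b); and your converse argument (Grothendieck duality on the extremal graded piece, plus nonvanishing of $\td_*$ of a nonzero coherent sheaf on the projective $\Sigma_X$) is the same mechanism used in the proof of Theorem \ref{tlowCoefficients}. The one genuine error is the Tate twist in your key duality step. Saito's compatibilities read $\mathbf{D}\,\psi_{f,1}\cong\psi_{f,1}\,\mathbf{D}(1)$ but $\mathbf{D}\,\varphi_{f,1}\cong\varphi_{f,1}\,\mathbf{D}$ with \emph{no} twist; equivalently, for $M$ pure of weight $w$ the weight filtration of $\varphi_{f,1}M$ is centered at $w$, whereas that of $\psi_{f,1}M$ is centered at $w-1$. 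Hence
\[ \mathbf{D}M_0\;\cong\;\varphi_{f,1}\bigl(\mathbf{D}(\bQ^H_Y[\dim Y])\bigr)\;\cong\;M_0(\dim Y),\]
not $M_0(\dim Y-1)$: you have applied the nearby-cycle twist to the vanishing cycles. With the correct twist your involution becomes $q\mapsto -q-\dim Y$, so $-q_{\max}(M_0)=p(M_0)+\dim Y={\rm HRH}(X)+2$, and your argument actually establishes
\[ \min\{p\in\bZ\mid[M^{\{1\}}_{y*}(X\subseteq Y)]_p\neq0\}\;\geq\;{\rm HRH}(X)+2,\]
with equality when $\Sing(X)$ is projective.

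I would not repair this by keeping the smaller twist, because the node shows that $+2$ is the right constant. For $X=\{xy=0\}\subset\bC^2$ one has $\varphi^H_{f,1}(\bQ^H_Y[2])=(i_0)_*\bQ(-1)$, so $M^{\{1\}}_{y*}(X)=y$ and the minimum above equals $1$; on the other hand $X$ is $0$-Du Bois but not $0$-rational, so ${\rm HRH}(X)=-1$ by \eqref{eq-differenceDBRatl} (equivalently, $p(\varphi_{f,1}(\bQ^H_Y[2]))=-1$ in Theorem \ref{tHRHInequality}(b)). Thus your compensating error reproduces the displayed bound ${\rm HRH}(X)+1$, but that bound is itself off by one from what the paper's own ingredients give: combining \cite[Theorem 4]{MSY23} with Theorem \ref{tHRHInequality}(b) yields $\min\{p\}=\widetilde{\alpha}^{\{1\}}(X)=p(\varphi_{f,1}(\bQ^H_Y[\dim Y]))+\dim Y={\rm HRH}(X)+2$. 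The correct response is to fix the twist and flag the discrepancy with the statement, not to adjust the duality to fit. A minor further caveat: Proposition \ref{pr13} is proved only for globally principal hypersurfaces, so your parenthetical reduction of the general case is no more complete than the paper's own appeal to it; this is a presentational issue shared with the source.
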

\begin{proof} We have equality by Proposition \ref{pr13}:
\[ M_{y*}^{\{1\}}(X\subseteq Y) = (1-y) M_{y*}^{\{1\}}(X),\]
and so it suffices to prove the claim with $[M_{y*}^{\{1\}}(X)]_p$.

For this, we use \cite[Theorem 4]{MSY23}, which says that 
\[ [M_{y*}^{\{1\}}(X)]_p = 0 \text{ for all } p < \widetilde{\alpha}^{\{1\}}(X),\]
with equality if ${\rm Sing}(X)$ is projective. Here $\widetilde{\alpha}^{\{1\}}(X) = \min p({\rm Gr}_V^0(\cB_{f,U})) + \dim Y$, where the minimum is over all open subsets $U\subseteq Y$ with $f\in \cO_U(U)$ such that $X\cap U = \{f=0\} \subseteq U$. 

Putting this together with the last claim of Theorem \ref{tHRHInequality} above (applied locally), we get
\[ \min\{p \in \bZ \mid [M_{y*}^{\{1\}}(X)]_p \neq 0\} \geq {\rm HRH}(X) +1,\]
with equality if ${\rm Sing}(X)$ is projective, as desired.
\end{proof}


\subsection{Higher singularities via Hirzebruch-Milnor classes}\label{hshmc}
In this section, we use the fact that a corresponding formula \eqref{MHsp} holds for the (un-normalized) Hirzebruch-Milnor classes in order to relate higher (Du Bois and rational) singularities to vanishing properties of these Hirzebruch-Milnor classes.

\begin{notation}
For an integer $p \geq 0$ we denote as before by  
$[M_{y\ast }(X\subset Y)]_{p}$
the coefficient of $y^p$ in $M_{y\ast}(X\subset Y)$, and similarly for  $[M^{\{1\}}_{y*}(X \subset Y)]_p$ and  $[M^{\{\neq 1\}}_{y*}(X \subset Y)]_p$.
\end{notation}
We then have the following results.
\begin{thm} \label{cor-higherSings}
Let $X\subset Y$ be a codimension $r$ local complete intersection in a smooth complex algebraic variety $Y$. If $X$ has $k$-Du Bois singularities, then
\begin{equation}\label{f29}
\begin{cases}
[M^{\{\neq 1\}}_{y*}(X \subset Y)]_p=0  & \text{for  all} \ p\geq \dim(Y)-k,  \\
[M^{\{1\}}_{y*}(X \subset Y)]_p=0  &  \text{for  all} \  p\geq \dim(Y)+1-k,
\end{cases}
\end{equation}
and the converse holds if ${\rm Sing}(X)$ is projective and ${\rm lct}(X) > r-1$.

If $X$ has $k$-rational singularities, then
\begin{equation} \label{f28}
[M_{y\ast}(X\subset Y)]_{p}=0 \ \ \text{for  all}  \ p \geq \dim(Y)- k,
\end{equation}
and the converse holds if ${\rm Sing}(X)$ is projective and ${\rm lct}(X) > r-1$.
\end{thm}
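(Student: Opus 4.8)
The plan is to deduce everything from Theorem \ref{thm-mainHighCoefficients} via the lci analogue of the expansion \eqref{MHsp}. First I would record that, exactly as in the hypersurface discussion of Section \ref{ccc} (comparing $\DR_t$ with $\DR_y$, substituting $t=-y$, and splitting by monodromy eigenvalue), the un-normalized classes satisfy
\[ [M^{\{1\}}_{(-y)\ast}(X\subset Y)]_p = M^{sp}_{t\ast}(X\subset Y)\vert_{t^{p}}, \qquad [M^{\{\neq 1\}}_{(-y)\ast}(X\subset Y)]_p = \sum_{\alpha\in\bQ\cap(0,1)} M^{sp}_{t\ast}(X\subset Y)\vert_{t^{p+\alpha}}, \]
so that $[M_{(-y)\ast}(X\subset Y)]_p = \bigoplus_{\alpha\in\bQ\cap[0,1)} M^{sp}_{t\ast}(X\subset Y)\vert_{t^{p+\alpha}}$, the unipotent coefficients picking out precisely the integral powers of $t$. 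Note that $[M_{(-y)\ast}]_p$ and $[M_{y\ast}]_p$ vanish simultaneously.

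For the forward implications I would simply feed the spectral vanishing of Theorem \ref{thm-mainHighCoefficients} into these formulas. If $X$ is $k$-Du Bois then $\widetilde{\alpha}(X)\ge r+k$, so taking $q=k-1,\mu=1$ gives $M^{sp}_{t\ast}\vert_{t^\alpha}=0$ for all $\alpha>\dim Y-k$; the integral-power identity then yields $[M^{\{1\}}_{y\ast}]_p=0$ for $p\ge \dim Y+1-k$, while the non-unipotent sum, all of whose terms sit in $t$-degrees $>\dim Y-k$, yields $[M^{\{\neq1\}}_{y\ast}]_p=0$ for $p\ge\dim Y-k$. If $X$ is $k$-rational then $\widetilde{\alpha}(X)>r+k$, i.e. $\ge r+k+\mu$ for some $\mu\in(0,1]$; taking $q=k$ gives $M^{sp}_{t\ast}\vert_{t^\alpha}=0$ for $\alpha>\dim Y-k-\mu$, and since every term $M^{sp}_{t\ast}\vert_{t^{p+\alpha}}$ with $p\ge\dim Y-k$ and $\alpha\in[0,1)$ lies in that range, the expansion gives $[M_{y\ast}]_p=0$ for $p\ge\dim Y-k$.

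For the converses (assuming $\Sing(X)$ projective and $\mathrm{lct}(X)>r-1$) I would argue by contraposition. If $X$ fails to be $k$-Du Bois (resp. $k$-rational) then $\widetilde{\alpha}(X)=r+q'+\mu'<r+k$ (resp. $\le r+k$), and the proof of Theorem \ref{thm-mainHighCoefficients} exhibits a nonzero top spectral coefficient $M^{sp}_{t\ast}\vert_{t^{\beta_0}}\ne0$ at $\beta_0=\dim Y-q'-\mu'$, identified through Corollary \ref{cor-lowestHodge} with $s^\ast\td_\ast$ of a pullback $\pi^\ast\cF$ of a nonzero coherent sheaf $\cF$ on the projective locus $\Sigma_X$. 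When $\mu'=1$ the degree $\beta_0$ is an integer, hence unipotent; since $M^{sp}_{t\ast}\vert_{t^\beta}=0$ for all $\beta>\beta_0$, the non-unipotent contributions at $y^{\beta_0}$ vanish, so $[M^{\{1\}}_{y\ast}]_{\beta_0}\ne0$ (resp. $[M_{y\ast}]_{\beta_0}\ne0$) with $\beta_0$ in the forbidden range, a contradiction.

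The remaining case $\mu'\in(0,1)$ is the genuinely delicate one and is where I expect the main obstacle. Now $\beta_0$ is non-integral, so it feeds the non-unipotent coefficient $[M^{\{\neq1\}}_{(-y)\ast}]_{p^\ast}$ at $p^\ast=\lfloor\beta_0\rfloor=\dim Y-q'-1$ together with the strictly lower terms $M^{sp}_{t\ast}\vert_{t^{p^\ast+\alpha}}$ for $0<\alpha<1-\mu'$, and a priori these could cancel the $\beta_0$-term as homology classes. To defeat this I would pass to the top homological degree by pairing with the fundamental class of a maximal-dimensional component $W_0$ of $\Sing(X)$ on which $\widetilde{\alpha}(X)$ is attained, i.e. whose transverse ICIS has minimal exponent equal to $\widetilde{\alpha}(X)$; here I use that adjoining smooth factors leaves the minimal exponent unchanged, so that $\widetilde{\alpha}(X)=\min_W\widetilde{\alpha}(\text{transverse slice of }W)$. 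By the local computation of Proposition \ref{ptop} applied along $W_0$, the coefficient of $[\overline{W_0}]$ in $M^{\{\neq1\}}_{y\ast}$ equals $(1+y)^{\dim W_0+r}$ times the non-unipotent part of $\chi_y$ of the transverse ICIS. Since $\beta_0$ is the \emph{global} top spectral degree, $p^\ast$ is the \emph{top} $y$-power occurring here, so its coefficient is the single leading term, a nonzero Hodge number of the reduced (hence single-degree) Milnor cohomology of the ICIS, and no cancellation can occur. This forces $[M^{\{\neq1\}}_{y\ast}]_{p^\ast}\ne0$ (resp. $[M_{y\ast}]_{p^\ast}\ne0$) with $p^\ast\ge\dim Y-k$, the desired contradiction.
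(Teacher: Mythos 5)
Your forward implications are correct and are exactly the paper's argument: both reduce to Theorem \ref{thm-mainHighCoefficients} via the eigenvalue-refined form of \eqref{MHsp}, and your bookkeeping of which $t$-exponents feed which $y$-coefficients (producing the shift by one between the unipotent and non-unipotent bounds) is right. You have also correctly isolated the one nontrivial point in the converse, which the paper's own three-line proof leaves implicit: since $[M^{\{\neq 1\}}_{y*}(X\subset Y)]_{p}$ is a \emph{sum} of the spectral coefficients $M^{sp}_{t*}(X\subset Y)\vert_{t^{p+\alpha}}$ over $\alpha\in(0,1)$, the nonvanishing of the single top coefficient at $\alpha=1-\mu'$ supplied by the converse of Theorem \ref{thm-mainHighCoefficients} does not formally prevent cancellation against the lower terms.

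Your resolution of that point, however, does not work. The formula $\widetilde{\alpha}(X)=\min_W\widetilde{\alpha}(\text{generic transverse slice of }W)$, with $W$ running over components of $\Sing(X)$, is false in general: the local minimal exponent can drop at special points, so $\widetilde{\alpha}(X)$ may be attained only on a proper closed subset of every irreducible component of $\Sing(X)$, and then no component $W_0$ with the property you require exists. Moreover, even when it exists, Proposition \ref{ptop} only identifies the coefficient of the fundamental class in the \emph{top} homological degree of $\Sigma_X$; if $W_0$ is not a maximal-dimensional component, ``the coefficient of $[\overline{W_0}]$'' receives uncontrolled lower-degree contributions from the Todd classes of sheaves supported on the larger components and is not computed by the transverse ICIS. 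The fix consistent with the paper's machinery is different and does not require any slicing: by the vanishing \eqref{f30} applied with the actual value $\widetilde{\alpha}(X)=r+q'+\mu'$, the eigenvalues $\lambda\in(0,\mu')$ contribute zero to the critical coefficient $[M^{\{\neq1\}}_{y*}(X\subset Y)]_{p^*}$, while each $\lambda\in[\mu',1)$ contributes only through ${\rm Gr}^F_{q'+1-\dim Y}{\rm DR}(\varphi_{h,\lambda}(\bQ^H_{\widetilde{Y}}[\dim(\widetilde{Y})-1]))$, which is its \emph{lowest} Hodge piece and hence an honest coherent sheaf; the argument around \eqref{eq-isoLowestHodge} (the lower monodromic pieces needed there vanish at this Hodge level by Theorem \ref{tLCI}) shows each such sheaf is $\pi^*$ of a coherent sheaf on $\Sigma_X$. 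The sum of their $K$-theory classes is therefore the class of a single nonzero coherent sheaf on the projective variety $\Sigma_X$, whose Todd class is nonzero by \eqref{tdi}, so no cancellation can occur; the same remark, with the unipotent term at $\alpha=0$ added to the list of lowest Hodge pieces, handles the $k$-rational converse.
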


\begin{proof} The starting point is the equality
\[ [M_{(-y)\ast}(X)]_{p}=\bigoplus_{\alpha \in \bQ\cap[0,1)} M^{sp}_{t\ast}(X)|_{t^{p+\alpha}}.\]

We have that $X$ has $k$-Du Bois singularities if and only if $\widetilde{\alpha}(X) \geq r+k$. By taking $q=k-1$ and $\mu = 1$ in Theorem \ref{thm-mainHighCoefficients}, this implies the desired vanishings.

Similarly, $X$ has $k$-rational singularities if and only if $\widetilde{\alpha}(X) > r+k$, and so the last claim follows in the same way by taking $q =k$ and $0 < \mu \ll 1$.
\end{proof}

\begin{thm} \label{thm-mainLowCoefficients}
Let $X\subset Y$ be a codimension $r$ local complete intersection in a smooth complex algebraic variety $Y$. If $X$ has $k$-Du Bois singularities, then
\begin{equation}\label{f40}
[M_{y*}(X \subset Y)]_p=0 \  \text{for  all} \ p\leq k.
\end{equation}
If $X$ has $k$-rational singularities, then \eqref{f40} holds and, moreover,
\begin{equation} \label{f41}
[M_{y\ast}^{\{1\}}(X\subset Y)]_{k+1}=0.
\end{equation}

If ${\rm lct}(X) > r-1$ and ${\rm Sing}(X)$ is projective, then the converse implications hold.
\end{thm}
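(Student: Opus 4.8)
The plan is to reduce everything to Theorem~\ref{tlowCoefficients} through the coefficient-wise identities (the lci analogue of \eqref{MHsp})
\[ [M_{(-y)*}(X\subset Y)]_p = \sum_{\alpha\in\bQ\cap[0,1)} M^{sp}_{t*}(X\subset Y)|_{t^{p+\alpha}}, \qquad [M^{\{1\}}_{(-y)*}(X\subset Y)]_p = M^{sp}_{t*}(X\subset Y)|_{t^{p}},\]
the second holding because the unipotent summand $\varphi_{h,1}$ contributes only integer powers of $t$. For the Du Bois implication, $k$-Du Bois means $\widetilde\alpha(X)\ge r+k$; applying Theorem~\ref{tlowCoefficients} with $q=k-1,\ \mu=1$ gives $M^{sp}_{t*}|_{t^\alpha}=0$ for $\alpha<k+1$, so for $p\le k$ every summand $t^{p+\alpha}$ with $\alpha\in[0,1)$ has exponent $<k+1$, proving \eqref{f40}. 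For the rational case, $k$-rational implies $k$-Du Bois, hence \eqref{f40}; and applying Theorem~\ref{tlowCoefficients} with $q=k$ and $0<\mu\ll 1$ (legitimate since $\widetilde\alpha(X)>r+k$) yields $M^{sp}_{t*}|_{t^{k+1}}=0$, which by the second identity is precisely \eqref{f41}.

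\textbf{Converses.} Here I assume ${\rm lct}(X)>r-1$ and $\Sing(X)$ projective and argue by contraposition. Suppose $X$ is not $k$-Du Bois, so $\widetilde\alpha(X)=r+q'+\mu'<r+k$ with $q'\in\bZ_{\ge-1}$, $\mu'\in(0,1]$; then the smallest spectral exponent is $\beta_0=\widetilde\alpha(X)-r+1=q'+1+\mu'<k+1$, and one checks $\lfloor\beta_0\rfloor\le k$ in either case $\mu'\in(0,1)$ or $\mu'=1$. The aim is to show $[M_{y*}(X\subset Y)]_{\lfloor\beta_0\rfloor}\neq 0$, contradicting \eqref{f40}. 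Let $I_0$ be the lowest index with ${\rm Gr}^F_{I_0}{\rm DR}(\varphi_h(\bQ^H_{\widetilde Y}[\dim\widetilde Y-1]))\neq 0$; it corresponds to the $y$-degree $\lfloor\beta_0\rfloor$, and since everything in strictly lower $F$-degree vanishes, $I_0$ is simultaneously the lowest non-vanishing index for each eigenvalue summand occurring there. I would then argue this bottom piece equals $\pi^*\cG$ for a nonzero coherent sheaf $\cG$ on $\Sigma_X$; exactly as in the converse of Theorem~\ref{thm-mainHighCoefficients}, $s^*\td_*(\pi^*\cG)\neq 0$ since $\td_*$ is an isomorphism onto the rational Chow groups of the projective $\Sigma_X$. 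For the rational converse, assuming \eqref{f40} and \eqref{f41}, the Du Bois converse already gives $\widetilde\alpha(X)\ge r+k$; if equality held then $q'=k-1,\ \mu'=1$ and $\beta_0=k+1$ is an integer, so by Corollary~\ref{cor-lowestHodge} (case $\mu=1$) the bottom piece is the single unipotent term $\pi^*({\rm Gr}^F\cQ_X)$, whence $[M^{\{1\}}_{(-y)*}]_{k+1}=M^{sp}_{t*}|_{t^{k+1}}\neq 0$, contradicting \eqref{f41}. Thus $\widetilde\alpha(X)>r+k$, i.e. $X$ is $k$-rational.

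\textbf{Main obstacle.} The delicate step, which I expect to be the crux, is that the \emph{whole} bottom piece ${\rm Gr}^F_{I_0}{\rm DR}(\varphi_h)$ — the sum over all eigenvalues $\lambda$ appearing at $I_0$ — is a pullback $\pi^*\cG$ with $\cG\neq 0$, rather than only the single eigenvalue realizing the minimal exponent (which is all that Corollary~\ref{cor-lowestHodge} directly supplies). Without this, the coefficients $M^{sp}_{t*}|_{t^{\lfloor\beta_0\rfloor+\alpha}}$ summing to $[M_{(-y)*}]_{\lfloor\beta_0\rfloor}$ could a priori cancel in $H_*(\Sigma_X)$. I would rule this out using ${\rm lct}(X)>r-1$: since ${\rm lct}(X)$ is the smallest exponent with ${\rm Gr}_V^{\bullet}\cB_f\neq 0$, the hypothesis forces ${\rm Gr}_V^\chi\cB_f=0$ for all $\chi\le r-1$, so each eigenvalue class $\lambda$ is concentrated in monodromic degrees $\ge r-1+\lambda$. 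The hypothesis of the isomorphism \eqref{eq-isoLowestHodge} is then met by each eigenvalue summand at its lowest index $I_0$, making each a pullback (the unipotent summand handled by the $s^!$-variant and the $\cK_f$-quotient description of $\cQ_X$, exactly as in Corollary~\ref{cor-lowestHodge}). As these summands are genuine coherent sheaves, their classes cannot cancel in $K_0(\Sigma_X)_\bQ$, and since the $\lambda=\mu'$ summand is nonzero, their direct sum $\cG$ is nonzero; the Todd-isomorphism argument then applies. Gluing these local pullback descriptions across an open cover proceeds as in Lemma~\ref{lem-globalization} and Corollary~\ref{cor-lowestHodge}.
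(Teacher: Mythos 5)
Your forward direction is correct and coincides with the paper's own (two-line) argument: reduce to Theorem~\ref{tlowCoefficients} with $q=k-1,\mu=1$, resp.\ $q=k,0<\mu\ll 1$, via the lci analogue of \eqref{MHsp} and the observation that the unipotent summand accounts exactly for the integer powers of $t$. You are also right to flag, in your last paragraph, that the converse does not follow formally from Theorem~\ref{tlowCoefficients}: one only assumes vanishing of the \emph{sums} $\sum_{\alpha\in[0,1)}M^{sp}_{t*}|_{t^{p+\alpha}}$, so a non-cancellation argument among the eigenvalue summands at the extremal $y$-degree is genuinely needed, and your idea of realizing each summand as (essentially) a pullback of a coherent sheaf along $\pi$, so that their classes add without cancelling in $K_0(\Sigma_X)_{\bQ}$, is the right one.

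However, the converse as you have written it points at the wrong end of the Hodge filtration, and this is a genuine gap, not a typo. Since $[M_{(-y)*}(X\subset Y)]_p = s^*\td_*\bigl([{\rm Gr}^F_{-p}{\rm DR}(\varphi_h\bQ^H_{\widetilde Y})]\bigr)$, the \emph{lowest} non-vanishing index $I_0=q'+1-\dim Y$ of ${\rm Gr}^F_\bullet{\rm DR}(\varphi_h)$ (Corollary~\ref{cor-lowestHodge}) controls the $y$-degree $\dim Y-q'-1$, i.e.\ the \emph{top} coefficients relevant to Theorems~\ref{thm-mainHighCoefficients} and~\ref{cor-higherSings} --- not the $y$-degree $\lfloor\beta_0\rfloor=q'+1$ you need here. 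The low $y$-degrees are governed by ${\rm Gr}^F_{-(q'+1)}{\rm DR}(\varphi_h)$, the top of the Hodge filtration, which one reaches only through the Hodge-module duality \eqref{eq-nonunipDual} combined with Grothendieck duality $\mathbb D_{\rm coh}$; this is exactly why the paper devotes Section~\ref{dua} to a separate duality-based proof of Theorem~\ref{tlowCoefficients}. Concretely, the extremal piece at $y$-degree $q'+1$ is not $\pi^*\cG$ but $\mathbb D_{\rm coh}(\pi^*\cG)$ with $\cG={\rm Gr}^F_{q'+2-\dim Y}{\rm Gr}_V^{r-1+\mu'}(\cB_{X,Y})$ (and, in the unipotent case, ${\rm Gr}^F_{q'+2-\dim Y}{\rm Gr}_V^{r-1}(\cB_{X,Y})$ coming from the second half of Corollary~\ref{cor-lowestHodge}, \emph{not} the quotient $\cQ_X$ you invoke --- those are different sheaves). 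One then needs the compatibility of $\td_*$ with $\mathbb D_{\rm coh}$ (\cite[Example 18.3.19]{Fu}) to conclude non-vanishing, a step your argument omits because it never dualizes. With the duality inserted, your non-cancellation strategy (each eigenvalue summand at $y$-degree $q'+1$ is $\mathbb D_{\rm coh}(\pi^*\cF_\lambda)$ for coherent $\cF_\lambda$ on $\Sigma_X$, at least one non-zero, using ${\rm lct}(X)>r-1$ to verify the monodromic-support hypothesis of \eqref{eq-isoLowestHodge}) does go through.
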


\begin{proof}
    The assertion follows by using Theorem \ref{tlowCoefficients}, or more specifically,  by taking $q=k-1, \mu =1$ in the $k$-Du Bois case and $q = k,0 < \mu \ll 1$ in the $k$-rational case.
\end{proof}

\begin{remark}
Let us assume that $X=\{f=0\}$ is a globally defined hypersurface in the smooth variety $Y$. Proposition \ref{pr13} yields in this case that
\[[M_{y\ast}(X\subset Y)]_{p}=[M_{y\ast}(X)]_{p} + [M_{y\ast}(X)]_{p-1},\]
and so we can see the vanishings in Theorem \ref{thm-mainLowCoefficients} from Corollary \ref{thm: main 1}, if $X$ has $k$-Du Bois singularities and Corollary \ref{thm: main 2}, when $X$ has $k$-rational singularities.
\end{remark}


\end{document}